\documentclass[12pt,a4paper]{amsart}
\usepackage{amsmath,amssymb,amscd,amsthm}
\usepackage{graphicx}
\usepackage{enumerate}
\input xy
\xyoption{all}

\usepackage{comment}
\usepackage{tikz}
\usepackage{setspace}

\usetikzlibrary{arrows}

\usepackage{xspace}
\usepackage[colorlinks,final,backref=page,hyperindex]{hyperref}
\newtheorem{theorem}{Theorem}[section]
\newtheorem{example}[theorem]{Example}
\newtheorem{remark}[theorem]{Remark}
\newtheorem{lemma}[theorem]{Lemma}
\newtheorem{proposition}[theorem]{Proposition}
\newtheorem{corollary}[theorem]{Corollary}
\numberwithin{equation}{section}
\newtheorem{definition}[theorem]{Definition}

\newcommand{\C}{\mathbb C}

\newcommand{\Z}{\mathbb{Z}}

\def\mg{\mathfrak{g}}

\def\mm{\mathfrak{m}}

\def\mh{\mathfrak{h}}

\def\sl{\mathfrak{sl}}
\def\gl{\mathfrak{gl}}
\def\e{\epsilon}
\def\ba{\mathbf{a}}
\def\br{\mathbf{r}}
\def\bs{\mathbf{s}}
\def\bm{\mathbf{m}}
\def\b1{\mathbf{1}}
\def\bb{\mathbf{b}}
\def\bo{\mathbf{0}}

\def\bu{\mathbf{u}}

\begin{document}
\title[Cuspidal modules]{Classification of Simple Cuspidal Modules over Nongraded Witt Lie Algebras}

\author{Genqiang Liu,
Xiaoyao Zheng, Yufang Zhao
}

\date{}

\begin{abstract}
For a positive integer \(n\), let
$A_n=\mathbb{C}[t_1^{\pm1},\ldots,t_n^{\pm1},x_1,\ldots,x_n]$
and
$\mathfrak{g}_n=\bigoplus_{i=1}^n A_nd_i$,
where \(d_i=t_i\frac{\partial}{\partial t_i}
+\frac{\partial}{\partial x_i}\).
We first determine when the tensor module \(T(P,V)=P\otimes V\) is
simple, where \(P\) is a simple module over the Weyl type algebra \(D_n\) and \(V\) is a simple
\(\mathfrak{gl}_n\)-module. We then prove a canonical algebra isomorphism $A_n\#U(\mathfrak{g}_n)\cong D_n\otimes U(\mathfrak{m}_{\mathbf{1},\mathbf{0}}\Delta)$, and use it
to show that every simple cuspidal \(\mathfrak{g}_n\)-module is
isomorphic to a simple quotient of some
\(T(A_n(\lambda),V)\), where \(V\) is a finite-dimensional simple
\(\mathfrak{gl}_n\)-module.
\end{abstract}

\maketitle

\noindent\textbf{Keywords:} generalized weight module; cuspidal module;
nongraded Witt Lie algebra; tensor module

\vskip 5pt
\noindent
{\em 2020  Math. Subj. Class.:}
17B10, 17B20, 17B65, 17B66, 17B68

\section{Introduction}
Nongraded Lie algebras occur naturally in the theory of vertex and conformal
algebras. From the classification of quadratic conformal algebras associated
with certain Hamiltonian pairs \cite{X3,X4}, Xu constructed several families of
nongraded infinite-dimensional Lie algebras arising from locally finite
derivations of commutative associative algebras \cite{X1,X2}. These Lie algebras include
nongraded analogues of the Cartan-type Lie algebras \(W,S,H\), and \(K\).
The present paper concerns representations over the nongraded Lie algebras of type \(W\).

Representations of graded Witt-type Lie algebras have been studied
extensively. Weight modules with one-dimensional weight
spaces over graded generalized Witt algebras were classified in \cite{ZK}. Second cohomology groups were computed  for generalized
Witt-type Lie algebras in \cite{SZh2}.  Modules analogous to intermediate series  were classified
for nongraded generalized Witt algebras in \cite{SZ1}. Zhao subsequently
constructed irreducible generalized weight modules for nongraded Lie
algebras of type $W$ from finite-dimensional modules over general linear
Lie algebras \cite{Z}. These constructions are nongraded analogues of
Rao's tensor-field modules for derivation Lie algebras $W_n$ of Laurent polynomial
algebras \cite{Ra1,Ra2}.

Further landmarks include Rudakov's work on smooth representations of
Cartan-type Lie algebras over formal power-series algebras \cite{R1,R2},
Mathieu's classification of Harish--Chandra modules over the Virasoro
algebra \cite{M1}, and Penkov and Serganova's determination of the supports
of simple weight modules over \(W_n\) and \(\bar S_n\) \cite{PS}. Shen's
mixed-product construction \cite{Sh} led to the systematic study of tensor
modules over Witt type Lie algebras built from Weyl-algebra modules and \(\mathfrak{gl}_n\)-modules;
see, for example, \cite{LLZ}.
Simple  weight modules over Laurent
vector-field algebras $W_n$ with finite-dimensional weight spaces were classified by  Billig and Futorny, see \cite{BF1}.
Simple uniformly bounded weight modules over polynomial
vector-field algebras $W_n^+$ were classified by Xue and L\"u, see \cite{XL2}. Simple  weight modules over $W_n^+$ with finite-dimensional weight spaces were classified by  Grantcharov and Serganova, see \cite{GS}.

In this paper, we extend Zhao's simplicity criterion for the
tensor modules in \cite{Z} and classify all simple
cuspidal modules over \(\mathfrak g_n\). In Section~2, we introduce generalized weight modules over $\mg_n$,
the relevant
Weyl-type algebra $D_n$ and general tensor modules. In Section~3, we establish a
simplicity criterion for \(T(P,V)\) for any simple $D_n$-module $P$ and any simple $\gl_n$-module $V$, and analyze the exceptional exterior
power modules \(V(\delta_r,r)\), see Theorems~\ref{simplicity}
and~\ref{remain}. In Section~4, we first classify simple cuspidal
\((A_n,\mathfrak g_n)\)-modules (Proposition~\ref{(A_n,g_n)-module}) and
then use the isomorphism Theorem \ref{mainth1} and the \(A\)-cover construction to obtain the classification of
simple cuspidal \(\mathfrak g_n\)-modules, see Theorem~\ref{cuspidal}.

We denote by \(\Z\), \(\Z_{\geq0}\), \(\Z_{\geq1}\), and \(\C\) the
sets of integers, nonnegative integers, positive integers, and complex
numbers, respectively. All vector spaces and algebras are over \(\C\).
For a Lie algebra \(\mathfrak g\), we denote its universal enveloping
algebra by \(U(\mathfrak g)\). We write \(\otimes\) for
\(\otimes_{\mathbb C}\). An associative algebra \(A\) will also be
viewed as a Lie algebra \(A_L\) under the commutator bracket
\([x,y]=xy-yx\).

\section{Preliminaries and main results}

In this section, we prepare some necessary preliminaries, including
the algebra $\mg_n$, cuspidal modules over $\mg_n$, the Weyl type algebra $D_n$, the tensor $\mg_n$-modules $T(P,V)$, and the main results in this paper.

\subsection{The nongraded Witt Lie algebra}

Let \(\{e_1,e_2,\ldots,e_n\}\) be the standard basis of \(\mathbb{C}^n\), and set
\[
A_n=\mathbb{C}[t_1^{\pm1},\ldots,t_n^{\pm1},
x_1,\ldots,x_n].
\]
For convenience, let \(d_i=t_i\frac{\partial}{\partial t_i}
+\frac{\partial}{\partial x_i}\) for \(i=1,\ldots,n\), and write
\(t^{\ba}=t_1^{a_1}\cdots t_n^{a_n}\) and
\(x^{\bs}=x_1^{s_1}\cdots x_n^{s_n}\) for
\(\ba\in\Z^n\) and \(\bs\in\Z_{\geq0}^n\).
Set \(\mg_n=\bigoplus_{i=1}^nA_nd_i\) which is a Lie subalgebra
of $\mathrm{Der}(A_n)$. Its Lie bracket is
$$[t^{\ba}x^{\bm}d_{i},t^{\bb}x^{\bs}d_{j}]=
t^{\ba+\bb}(b_ix^{\bs+\bm}+s_ix^{\bs+\bm-e_i})d_j
-t^{\ba+\bb}(a_jx^{\bs+\bm}+m_jx^{\bs+\bm-e_j})d_i,
$$
where \(\ba,\bb\in\Z^n\), \(\bm,\bs\in\Z_{\geq0}^n\), and
\(1\leq i,j\leq n\). By \cite[Theorem~2.2]{X2}, \(\mg_n\) is a simple Lie algebra.
Let
$$W_n=\oplus_{i=1}^n\mathbb{C}[t_1^{\pm 1},\cdots ,t_n^{\pm 1}]d_i, \quad W_n^+=\oplus_{i=1}^n\mathbb{C}[x_1,\cdots ,x_n]d_i,$$ which are subalgebras of $\mg_n$.
It is easy to see that  $W_n\cong\mathrm{Der}(\mathbb{C}[t_1^{\pm 1},\cdots ,t_n^{\pm 1}])$
and $W_n^+\cong\mathrm{Der}(\mathbb{C}[x_1,\cdots ,x_n])$.
Let $\Delta=\mathrm{span}\{d_1,\dots,d_n\}$ which is a commutative subalgebra of $\mg_n$. In this paper, we will study $\mg_n$-modules that are locally finite over $\Delta$.

\begin{definition}
A \(\mg_n\)-module \(M\) is called a generalized weight module if
\(M=\bigoplus_{\lambda\in\C^n}M(\lambda)\), where
$$M(\lambda)=\{v \in M \mid (d_i-\lambda_i)^N v=0,\  \text{for every}\ i=1,\dots,n,
\text{and some } N>0\}.$$
The subspace \(M(\lambda)\) is called the generalized weight space of
weight \(\lambda\), and
\[
\operatorname{supp}(M)=\{\lambda\in\C^n\mid M(\lambda)\neq0\}
\]
is called the support of \(M\).
\end{definition}

For a generalized weight $\mg_n$-module  $M=\oplus_{\lambda \in \C^{n} }M(\lambda)$,
since \(\operatorname{ad}d_i\) acts locally nilpotently on
$W_n^+$, we have $W_n^+M(\lambda)\subset M(\lambda)$. Hence
each generalized weight space
$M(\lambda)$ is a $W_n^+$-module on which
$d_i-\lambda_i$ acts locally nilpotently, i.e.,
$M(\lambda)$ is a Whittaker \(W_n^+\)-module of type \(\lambda\),
see \cite{ZL}. Since \(W_n^+\) is an infinite-dimensional simple Lie
algebra, every finite-dimensional representation of \(W_n^+\) is trivial.
Thus \(M(\lambda)\) is infinite-dimensional whenever \(M\) is nontrivial.

\begin{definition}For a  generalized weight module $M$ over $\mg_n$, and $\lambda\in \emph{supp}(M)$, we also define
$$M_\lambda=\{v \in M \mid d_iv=\lambda_i v,\  \text{for every}\ i=1,\dots,n.\}.$$
Clearly $M_\lambda\subset M(\lambda)$.
The subspace \(M_\lambda\) is called the weight space of weight
\(\lambda\). A generalized weight \(\mg_n\)-module \(M\) is called
cuspidal if there exists \(r\in\Z_{\geq1}\) such that
\(\dim M_\lambda\leq r\) for every
\(\lambda\in\operatorname{supp}(M)\).
\end{definition}

\begin{remark}
Cuspidal modules are also called uniformly bounded modules.
We use the term cuspidal module because these modules cannot be obtained through the parabolic induction.
\end{remark}

\begin{example}
Let $M=A_n$ be the natural module of $\mg_n$. From
$$[d_{i},t^{\bb}x^{\bs}]=
b_it^{\bb}x^{\bs}+s_it^{\bb}x^{\bs-e_i},
$$
we see that $M=\oplus_{\bb\in \Z^n} M(\bb)$ is a generalized weight module, where
$$M(\bb)=t^{\bb}\C[x_1,\dots,x_n].
$$
Moreover the weight space $M_\bb=\C t^{\bb}$ for every $\bb\in \Z^n$.
\end{example}

\begin{example}
Let $V=\mg_n$ be the adjoint module of $\mg_n$. From
$$[d_{i},t^{\bb}x^{\bs}d_{j}]=
b_it^{\bb}x^{\bs}d_j+s_it^{\bb}x^{\bs-e_i}d_j,
$$
we see that $V=\oplus_{\bb\in \Z^n} V(\bb)$ is a generalized weight module, where
$$V(\bb)=\mathrm{span}
\{t^{\bb}x^{\bs}d_{j}\mid  \bs\in\Z_{\geq 0}^n, j=1,\dots,n\}.
$$
Moreover the weight space $V_\bb=\mathrm{span}
\{t^{\bb}d_{j}\mid  j=1,\dots,n\}$ for every $\bb\in \Z^n$.
Since $\mg_n$ is a simple Lie algebra,  $V$ is a simple cuspidal module over  $\mg_n$.
\end{example}

Let \(\widetilde{\mg}_n=\mg_n\ltimes A_n\), where
\[
[t^{\ba}x^{\bm}d_i,t^{\bb}x^{\bs}]
=t^{\ba}x^{\bm}d_i(t^{\bb}x^{\bs}),\quad [A_n,A_n]=0.
\]
We call \(\widetilde{\mg}_n\) the extended Witt type algebra.

\begin{definition}A $\widetilde{\mg}_n$-module is called an $(A_n,\mg_n)$-module
if the action of $A_n$ on $M$ is associative.
\end{definition}

\begin{example}The adjoint module $\mg_n$ is an $(A_n,\mg_n)$-module
under the natural action of $A_n$:
$$t^{\ba}x^{\bm} \cdot t^{\bb}x^{\bs}d_j=t^{\ba+\bb}x^{\bs+\bm}d_j.$$
\end{example}

Since  $A_n$ is a left  module algebra over the Hopf algebra $U(\mg_n)$, we have the smash product algebra $A_n\# U(\mg_n)$. An $(A_n,\mg_n)$-module
is exactly a module over  $A_n\# U(\mg_n)$. Let $D_n$ be the associative subalgebra of $A_n\#U(\mg_n)$ generated by $A_n$ and $d_1,\dots,d_n$. We call it a Weyl-type algebra, and it satisfies the following relations:
$$\aligned \
[d_i,d_j]=0, [t^{\ba}x^{\bm},t^{\bb}x^{\bs}]=0, [d_i,t^{\ba}x^{\bm}]=a_it^{\ba}x^{\bm}+m_it^{\ba}x^{\bm-e_i},
\endaligned $$
for all $1\leq i,j\leq n$, $\ba,\bb\in \Z^n$, $\bm,\bs\in \Z^n_{\geq0}$. We will show that $D_n$ is a tensor product factor
of $A_n\# U(\mg_n)$, see Theorem \ref{mainth}.

\subsection{Tensor  modules over $\mg_n$}

Let $E_{i,j}$ be the $n\times n$ matrix whose $(i,j)$-entry is $1$ and all other entries are $0$. For any $D_n$-module $P$ and  $\gl_n$-module $V$, set $T(P, V)=P\otimes V$. By
Theorem~\ref{mainth1} and Lemma~\ref{lem:gln-quotient}, we obtain the
following Lie algebra homomorphism:
$$\aligned \
\sigma: \mg_n &\rightarrow D_n\otimes U(\gl_n),\\
t^{\bb}x^{\bs}d_j &\mapsto t^{\bb}x^{\bs}d_j\otimes 1 + \sum^n_{i=1}d_i(t^{\bb}x^{\bs})\otimes E_{i,j},
\endaligned$$
for all $\bb\in \Z^n$, $\bs\in \Z^n_{\geq0}$.
Then \(T(P,V)\) has an \((A_n,\mg_n)\)-module
structure such that
$$\aligned
t^{\bb}x^{\bs}d_j(p\otimes v)&=(t^{\bb}x^{\bs}d_j p)\otimes v
+\sum_{i=1}^n(b_it^{\bb}x^{\bs}p+s_it^{\bb}x^{\bs-e_i}p)\otimes E_{i,j}v,\\
t^{\ba}x^{\bm}(p\otimes v)&=(t^{\ba}x^{\bm}p)\otimes v, \ \ p\in A_n, v\in V, \ba,\bb\in \Z^n, \bm,\bs\in \Z^n_{\geq0}.
\endaligned$$

When  $P=t_1^{\lambda_1}\dots t_n^{\lambda_n} A_n$, $\lambda\in \C^n$, and $V$ are finite-dimensional simple $\gl_n$-modules, simplicities of these $\mg_n$-modules $T(P, V)$  were given  in \cite{Z}. We will decide
when \(T(P,V)\) is simple for arbitrary simple \(D_n\)-modules \(P\)
and simple \(\gl_n\)-modules \(V\).


We know that
$$\gl_n=\bigoplus_{1\leq i,j\leq n}\C E_{i,j}, \quad \sl_n=\C\mh \oplus (\bigoplus_{1\leq i\neq j\leq n}\C E_{i,j}),$$
where $\mh=\text{span}\{h_i=E_{i,i}-E_{i+1,i+1}\mid i=1,\dots, n-1\}$. Let
$$\Lambda^{+}=\{\lambda\in \mh^* \mid \lambda(h_i)\in \Z_{\geq0}, \forall\ i=1,\dots, n-1\}$$
be the set of dominant integral weights with respect to \(\mh\). For
\(\psi\in\Lambda^+\), let \(V(\psi)\) be the simple
\(\sl_n\)-module of highest weight \(\psi\). Extend
\(V(\psi)\) to be a
\(\gl_n\)-module \(V(\psi,b)\) by letting the identity matrix $I$ act as
the scalar \(b\). For \(0\leq i\leq n\), define
\(\delta_i\in\mh^*\) by
\[
\delta_i(h_j)=\delta_{ij},\qquad 1\leq j\leq n-1,
\]
where \(\delta_0=\delta_n=0\).

Consider the exterior product
\begin{align*}
\bigwedge^{r}(\C^n)=\C^n\wedge \cdots \wedge \C^n \quad (r \, \text{times}),
\end{align*}
which is a $\gl_n$-module by
$$X(v_1\wedge\cdots \wedge v_r)=\sum^r_{i=1}v_1\wedge \cdots \wedge Xv_i\wedge \cdots \wedge v_r, \forall\ X\in \gl_n.$$
And there is a $\gl_n$-module isomorphism $V(\delta_r,r)\cong \bigwedge^{r}(\C^n)$. In particular, $V(\delta_0,0)$ and $V(\delta_n,n)$ are one-dimensional $\gl_n$-module.

\subsection{Main results}

In this paper, we will show the following two main results.

\begin{theorem}
Let $P$ be a simple $D_n$-module,  $V$  a simple $\gl_n$-module that is not isomorphic to $V(\delta_r,r)$ for any $r\in \{0, 1,\cdots, n\}$. Then $T(P, V)$ is a simple module over $\mg_n$.
\end{theorem}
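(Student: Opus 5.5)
We will follow the standard strategy for Shen–Larsson type tensor modules: first extract from any nonzero submodule an element whose $P$-part can be moved freely, and then use the $\gl_n$-part to generate everything.

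Let $N\subseteq T(P,V)$ be a nonzero $\mg_n$-submodule and take a nonzero $w=\sum_{k} p_k\otimes v_k\in N$, with the $v_k$ linearly independent. The key device is to consider, for fixed $j$ and fixed $\bb\in\Z^n$, the operators
\[
X(\bb,\bs,j)=t^{\bb}x^{\bs}d_j,
\]
and to form suitable finite-difference combinations in the exponents. Concretely, we take for $u\in A_n$ the combination
\[
\Omega(u,\ba,\bb)=t^{\ba+\bb}x^{\bs}d_j - t^{\ba}d_j\, t^{\bb}x^{\bs}\ \ \text{(computed inside } D_n\otimes U(\gl_n)\text{ via } \sigma).
\]
More practically, we use second differences such as
\[
X(\ba+\bb)-X(\ba)-X(\bb)+X(0),
\]
applied in $U(\mg_n)$, where $X(\ba)=t^{\ba}d_j$ acts as $t^{\ba}d_j\otimes1+\sum_i a_it^{\ba}\otimes E_{ij}$. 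Products of two such operators, arranged so that the $D_n$-terms cancel, leave an operator of the form $t^{\ba+\bb}\otimes (\text{quadratic expression in } E_{ij})$. Similarly, using the $x$-variables with $\bs=e_k$ and $\bs=e_k+e_l$ gives elements of the form $t^{\bb}x^{\bs}\otimes E_{ij}E_{kl}$-type combinations. We will compute the commutator of $\sigma(t^{\ba}x^{\bm}d_i)$ and $\sigma(t^{\bb}x^{\bs}d_j)$ and the associated "quadratic" elements
\[
\sigma(t^{\ba}d_i)\sigma(t^{\bb}d_j)\text{-combinations} \ \longmapsto\ t^{\ba+\bb}\otimes\bigl(a_ib_j\,E_{\cdot i}E_{\cdot j}+\dots\bigr).
\]
Our goal is to show that $N$ is stable under $A_n\otimes \mathcal{E}$, where $\mathcal{E}$ is the associative span of the operators
\[
E_{ki}E_{lj}-\delta_{li}E_{kj} \quad\text{(or } E_{ki}E_{lj}\text{ with appropriate correction)}
\]
acting on $V$, together with $D_n\otimes 1$ modulo such terms.

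This is the main obstacle. As in Rao's and Liu–Lu–Zhao's work, the operators $E_{ki}E_{lj}-\delta_{il}E_{kj}$ generate $\mathrm{End}(V)$ (or at least act densely) precisely when $V$ is not an exterior power $V(\delta_r,r)$. We will invoke, or reprove via Jacobson density, that for a simple $\gl_n$-module $V\not\cong\bigwedge^r\C^n$ the subspace $\mathrm{span}\{(E_{ki}E_{lj}-\delta_{il}E_{kj})v\}$ equals $V$ for all $v\neq0$. Equivalently, the map $\gl_n\otimes\gl_n\otimes V\to V$ factoring through $\bigwedge^2\C^n\otimes\cdots$ is nonzero on every vector. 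For infinite-dimensional $V$ we need the version that the annihilator-type subspace $\{v: (E_{ki}E_{lj}-\delta_{il}E_{kj})v=0\ \forall i,j,k,l\}$ is zero. This is an analysis of the $\gl_n$-module $\bigwedge^2$-component, and it is exactly where the exceptional modules appear. We expect to do it by showing that such a $v$ generates a submodule on which $\gl_n$ acts through the "rank one" quotient, forcing $V\cong V(\delta_r,r)$.

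Granting this, the conclusion follows. Using the $t^{\ba}\otimes(\cdot)$ operators with $\ba$ varying, and the simplicity of $P$ over $D_n$, we first reduce $w$ to a pure tensor $p\otimes v\in N$ by a Vandermonde argument in $\ba$ (the $A_n$-action separates terms). We then get $D_n p\otimes \mathcal{E}v\subseteq N$, hence $P\otimes V\subseteq N$, using $D_np=P$ and the density statement above. The $D_n$-action is recovered because $\sigma(t^{\bb}x^{\bs}d_j)-\sum_i d_i(t^{\bb}x^{\bs})\otimes E_{ij}$ now lies in the action on $N$.
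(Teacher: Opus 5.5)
Your outline has the right overall shape as the paper's proof: second differences of degree-two elements of $U(\mg_n)$ to isolate a quadratic $\gl_n$-operator, then density on $P$, then a characterization of exterior powers. But the steps that actually carry the proof are either wrong or missing.

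\textbf{The separation step fails.} Reducing $w=\sum_k p_k\otimes v_k$ to a pure tensor ``by a Vandermonde argument in $\ba$'' cannot work. Write $\Phi$ for the quadratic element of $U(\gl_n)$ you extract. The operators you produce act on $w$ as $\sum_k t^{\bb}x^{\bs}p_k\otimes\Phi v_k$, with the \emph{same} multiplier on every $p_k$. So no choice of exponents separates the summands: $P$ is an arbitrary simple $D_n$-module, not a weight module, and $N$ is not $A_n$-stable. Nor can you separate before applying $\Phi$, since the $\mg_n$-action on $T(P,V)$ is not of the form $u\otimes 1$ with $u$ ranging over $D_n$. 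What is needed is the following. Apply $d_r$, which acts as $d_r\otimes 1$, to get $\sum_k up_k\otimes\Phi v_k\in N$ for all $u\in D_n$. Take the $p_k$ linearly independent. Then use the Jacobson density theorem together with $\operatorname{End}_{D_n}(P)=\C$ (Dixmier's lemma, since $D_n$ has countable dimension) to get $P\otimes\Phi v_k\subseteq N$. Note that this yields $P\otimes\Phi v_k$, not $p\otimes v_k$.

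\textbf{Your operator family is wrong.} The operators that actually arise are not $E_{ki}E_{lj}-\delta_{il}E_{kj}$. The $m^2$-coefficient of $x^{me_l}d_i\cdot t^{\bb}x^{\bs-me_l}d_j$ acting on $w$ is $\sum_k t^{\bb}x^{\bs-2e_l}p_k\otimes(\delta_{l,i}E_{l,j}-E_{l,i}E_{l,j})v_k$. Only these operators, and their polarizations in $l$, are available. This is not just imprecision: $E_{11}E_{22}$ belongs to your family and acts as $1$ on $\bigwedge^2\C^2$. If such operators could be extracted from the $\mg_n$-action, the argument would show $T(P,\bigwedge^2\C^3)$ is simple, contradicting Corollary~\ref{reducible}.

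\textbf{Your density statement for $V$ is false and unnecessary.} Take $V=S^3\C^2$ and $v=e_1^3$. Every degree-two element of $U(\gl_2)$ shifts weights by at most two roots, so $e_2^3$ is not in the span of the $\Phi v$. The observation you are missing is that $\{v\in V\mid P\otimes v\subseteq N\}$ is a $\gl_n$-submodule, because $x_jd_i$ acts as $x_jd_i\otimes 1+1\otimes E_{j,i}$. Hence a single nonzero $\Phi v_k$ already forces $N=T(P,V)$.

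\textbf{The remaining case is not handled.} If $\Phi v_k=0$ for all such $\Phi$ and all $k$, you must first propagate this to $\Phi V=0$. The paper does this by rerunning the argument on $x_ad_b\,w$, then on $x_{a'}d_{b'}x_ad_b\,w$, and so on, obtaining $\Phi\,U(\gl_n)v_k=0$. You then need the characterization you only gesture at with a ``rank one quotient''. For the correct family, $\Phi V=0$ gives $(E_{i,i}-E_{i,i}^2)V=0$ and $E_{l,i}^2V=0$ for $l\neq i$. This forces $V$ to be finite-dimensional (the paper quotes [LZ, Lemma~2.4(2)]). So $V$ has a highest weight with entries in $\{0,1\}$, non-increasing, i.e.\ $V\cong V(\delta_N,N)$.

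Without these four ingredients the outline is not yet a proof: density on $P$ via Dixmier's lemma, the $\gl_n$-stability of $\{v\mid P\otimes v\subseteq N\}$, the propagation to all of $V$, and the finite-dimensionality/highest-weight argument.
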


Simple sub-quotients of $T(P, V(\delta_r,r))$ are also
determined in this paper.

\begin{theorem}\label{thm:main-classification}
If \(M\) is a simple cuspidal \(\mg_n\)-module, then \(M\) is
isomorphic to a simple quotient of \(T(A_n(\lambda),V)\) for some
\(\lambda\in\C^n\) and some finite-dimensional simple
\(\gl_n\)-module \(V\).
\end{theorem}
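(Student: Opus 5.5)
The plan follows the strategy announced in the introduction: pass from $\mg_n$-modules to $(A_n,\mg_n)$-modules via an $A$-cover, classify the simple cuspidal $(A_n,\mg_n)$-modules using the isomorphism $A_n\#U(\mg_n)\cong D_n\otimes U(\mm_{\b1,\bo}\Delta)$, and then descend.

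\emph{Step 1: the $A$-cover.} Let $M$ be a simple cuspidal $\mg_n$-module, assumed nontrivial since the trivial module is handled separately. Consider the $\mg_n$-homomorphism $\mg_n\otimes M\to M$ given by the action. Using $\widetilde{\mg}_n$, make $A_n\otimes \mg_n\otimes M$ (acting on the first two factors) an $(A_n,\mg_n)$-module, and map it to $\mathrm{Hom}(A_n,M)$ by
\[
f\otimes y\otimes m\mapsto \bigl(g\mapsto (fg\,y)m\bigr).
\]
Call the image $\widehat M$; this is the $A$-cover. It is an $(A_n,\mg_n)$-module, and the map $\widehat M\to \mg_n M=M$ given by evaluation at $1$ is a surjective $\mg_n$-homomorphism. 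I must show that $\widehat M$ is again a generalized weight module with uniformly bounded weight spaces. This is the \textbf{main obstacle}. The natural approach is the Billig--Futorny argument: operators $\Omega^{(k)}_{\ba,\bb}$ built from $t^{\ba}x^{\bm}d_i$ annihilate cuspidal modules for large $k$, and this relation bounds $\dim\widehat M_\lambda$ in terms of $\dim M_\lambda$ and its neighbours. The $x$-variables spoil the grading, so I would restrict to weight spaces $M_\lambda$, which are finite-dimensional. On these, $W_n$ (the $t$-part) acts by honest weight operators, so the $W_n$-argument applies. The polynomial parts in $x$ are then controlled by the local nilpotence of $\mathrm{ad}\,d_i$: every generalized weight vector is reached from genuine weight vectors by finitely many applications of $d_i-\lambda_i$.

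\emph{Step 2: simple cuspidal $(A_n,\mg_n)$-modules.} Since $\widehat M$ is cuspidal, it has finite length, or at least a simple cuspidal $(A_n,\mg_n)$-subquotient $N$ that still maps onto $M$ (use the simplicity of $M$ and a minimal-quotient argument). By Theorem~\ref{mainth1}, $N$ is a simple module over $D_n\otimes U(\mm_{\b1,\bo}\Delta)$. Cuspidality, together with the fact that $A_n(\lambda)$ is the unique simple weight $D_n$-module with one-dimensional weight spaces $\C t^{\lambda+\bb}$, forces $N\cong A_n(\lambda)\otimes V'$. Here $V'$ is a finite-dimensional simple $\mm_{\b1,\bo}\Delta$-module. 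The finite dimensionality holds because $\dim N_\mu=\dim V'$ is bounded, using a density or Jacobson argument for simple tensor-product modules when one factor has countable dimension. Moreover, the nilpotent ideal of $\mm_{\b1,\bo}\Delta$ acts trivially on $V'$, so $V'$ factors through $\gl_n$ by Lemma~\ref{lem:gln-quotient}. This is Proposition~\ref{(A_n,g_n)-module}: $N\cong T(A_n(\lambda),V)$ with $V$ finite-dimensional simple.

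\emph{Step 3: conclusion.} The composite $T(A_n(\lambda),V)\cong N\to M$ is a nonzero $\mg_n$-homomorphism onto the simple module $M$. Hence $M$ is a simple quotient of $T(A_n(\lambda),V)$. By Theorems~\ref{simplicity} and~\ref{remain}, this quotient is the module itself unless $V\cong V(\delta_r,r)$, in which case it is one of the explicitly determined quotients.
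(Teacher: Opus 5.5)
\paragraph{Overall.} Your route is the paper's. Your $\widehat M\subseteq\mathrm{Hom}(A_n,M)$ coincides with $(\mg_n\otimes M)/K(M)$: the kernel of $y\otimes m\mapsto(g\mapsto(gy)m)$ on $\mg_n\otimes M$ is exactly $K(M)$, and the extra $A_n$ factor is redundant, since $f\otimes y\otimes m$ and $1\otimes fy\otimes m$ have the same image. Step~2 is Lemma~\ref{lem:finite-length-cover} together with Proposition~\ref{(A_n,g_n)-module}, and Step~3 is the composition-series argument in the proof of Theorem~\ref{cuspidal}.

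\paragraph{The gap is Step~1.} You rightly flag it as the crux, but you do not establish it, and the mechanism you propose for the $x$-directions does not work. Applying $d_i-\lambda_i$ moves a generalized weight vector \emph{down} its Jordan chain toward an honest weight vector, and it kills honest weight vectors. So nothing is ``reached from'' honest weight vectors this way. In any case, local nilpotence of $\operatorname{ad} d_i$ gives no bound on $\dim\widehat M_\mu$. The $x$-directions are handled differently. Since $\widehat M$ is a $D_n$-module, $\widehat M(\mu)$ is a free $\C[x_1,\dots,x_n]$-module on $\widehat M_\mu$. Generalized weight spaces are therefore automatically infinite-dimensional, and only the honest ones need bounding.

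\paragraph{The missing idea.} It is the reduction in Proposition~\ref{M-hat}. The honest weight space $\widehat M_\mu$ is spanned by the classes $t^{\bm}d_j\boxtimes w$ with $\bm\in\Z^n$, $1\le j\le n$, and $w\in M_{\mu-\bm}$ an honest weight vector of $M$. In other words, honest weight vectors of $\widehat M$ come only from $x$-free elements of $\mg_n$ paired with honest weight vectors of $M$. The paper derives this from the definition of $K(M)$ by induction on total $x$-degree.

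With this in hand, $\bigoplus_{\bm}\widehat M_{\lambda+\bm}$ is identified with the $A$-cover of the uniformly bounded weight $W_n$-module $\bigoplus_{\ba}M_{\lambda+\ba}$. As you note, $W_n$ does act on this module by honest weight operators. Then \cite[Theorem~4.8]{BF1} is applied as a black box, so there is no need to rerun the $\Omega$-operator argument.

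\paragraph{Smaller points.}
\begin{itemize}
\item Finite length in Step~2 needs the observation that the invertible elements $t^{\ba}$ force every nonzero subquotient to meet the fixed weight space $\widehat M_\lambda$.
\item The tensor factorization comes from an honest weight vector $v\in N_\lambda$ with $D_nv=A_nv\cong A_n(\lambda)$, combined with Lemma~\ref{tensor proper}(1). Cuspidality is used only for $\dim V'<\infty$.
\item The closing remark about $V(\delta_r,r)$ is not needed for the statement.
\end{itemize}
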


We first show that any simple cuspidal \((A_n,\mg_n)\)-module is
isomorphic to some \(T(A_n(\lambda),V)\). We then use the \(A\)-cover
construction to classify
simple cuspidal module over  $\mg_n$.

\section{Simplicity of $T(P,V)$}

In this section, we determine when \(T(P,V)\) is simple. We first
prove Theorem~\ref{simplicity}. We then discuss the exceptional case
\(V\cong V(\delta_r,r)\), \(0\leq r\leq n\), and describe all simple  subquotients of \(T(P,V(\delta_r,r))\).

\subsection{Simplicity of $T(P,V)$ when $V\ncong  V(\delta_{r},r)$ }

In this subsection, we address the case that $V\ncong  V(\delta_{r},r)$.

\begin{theorem}\label{simplicity}
Let \(P\) be a simple \(D_n\)-module and \(V\) a simple
\(\gl_n\)-module. If \(T(P,V)\) is reducible as a \(\mg_n\)-module,
then \(V\cong V(\delta_r,r)\) for some \(r\in\{0,1,\ldots,n\}\).
\end{theorem}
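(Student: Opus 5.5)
Let $W$ be a nonzero $\mg_n$-submodule of $T(P,V)$. The aim is to show $W=T(P,V)$ unless $V\cong V(\delta_r,r)$, and the standard route is to produce operators built from $U(\mg_n)$ that act only on the $V$-factor. I would use the ``difference operators'' trick of Shen and Rao/Zhao. For $\ba\in\Z^n$, $\bm\in\Z^n_{\geq0}$ and $u=t^{\ba}x^{\bm}$, compare actions of elements $u\,t^{\bb}x^{\bs}d_j$ for varying $\bb,\bs$. Since the action is polynomial in $(\bb,\bs)$ (the exponents $b_i,s_i$ enter linearly in $\sigma$), a suitable finite alternating sum
\[
\Omega_{\ba,\bb}^{(i,j)}=\sum \pm\, t^{\ba+\ldots}d_j\cdots t^{\bb-\ldots}d_i\ \ \text{(or second differences in $\bb$)}
\]
kills the $D_n$-part and the terms linear in $\bb$. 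It leaves an operator of the form
\[
w\mapsto (t^{\ba+\bb}\cdot{})\otimes E_{k,i}E_{l,j}\ \text{(plus lower terms)},
\]
acting on $W$. Concretely, I would compute $\sigma(t^{\bb+\ba}d_j)$ and take the third-order difference in $\ba$. This shows that $W$ is stable under $t^{\bc}\otimes E_{k,i}E_{l,j}$-type operators, with the antisymmetrized combinations appearing as in Zhao's paper \cite{Z}. Using the $x$-variables, the derivative terms $s_i t^{\bb}x^{\bs-e_i}$ give the same structure. Since $P$ is arbitrary, I would work with elements $t^{\bb}$ only, which are invertible in $D_n$.

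Step 1: Show that $W$ is an $A_n$-submodule. I would first derive that $t^{\bc}\otimes (E_{i,k}E_{l,j}-\delta\text{-corrections})$ acts on $W$. As in Shen/Larsson type arguments, the span of the resulting operators $1\otimes X$ over $X$ in the image of $\gl_n\wedge\gl_n$-type products includes $1\otimes\mathcal{E}$ for the subspace $\mathcal{E}\subset \mathrm{End}(V)$ spanned by $E_{ik}E_{lj}-E_{lk}E_{ij}$. Step 2: Show that $\mathcal{E}$ acts nontrivially unless $V\cong V(\delta_r,r)$. Indeed, if all $E_{ik}E_{lj}-E_{lk}E_{ij}$ (appropriately) act as zero on $V$, then $V$ is a quotient of the algebra making $V$ ``exterior-like'', and the classical argument (Rao, Zhao, Liu--Lu--Zhao) shows $V\cong\bigwedge^r\C^n$. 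If they do not all act as zero, then by simplicity of $V$ and the Jacobson density theorem the associative algebra generated is dense, so $W\supseteq P'\otimes V$ for some nonzero $P'\subseteq P$. Step 3: Show that $P'$ is a $D_n$-submodule using the formula for $\sigma$. Acting with $t^{\bb}x^{\bs}d_j$ and subtracting the $E$-terms, which are already in $W$ by Step 2, shows that $P'$ is stable under $A_n$ and $A_nd_j$, hence $P'=P$ by simplicity.

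The main obstacle is Step 1/2 in the generality of arbitrary (possibly infinite-dimensional) simple $P$ and $V$. We cannot use weight decompositions, so the operator extraction must be purely algebraic via $D_n$-invertibility of $t^{\bb}$. One must also handle that the derivative terms in $x$ produce extra lower-order pieces. I would first try taking $\bs=0$ throughout, reducing to the $W_n$ computation of Liu--Lu--Zhao \cite{LLZ}, and then reuse their density lemma for arbitrary simple $\gl_n$-modules.
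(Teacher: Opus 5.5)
There is a genuine gap, and it starts with which operators you extract. The quadratic coefficient in the shift is what survives: in the paper this is $\tfrac12(\Gamma_2-2\Gamma_1+\Gamma_0)$ with $\Gamma_m=x^{me_l}d_i\,t^{\bb}x^{\bs-me_l}d_j$, or equivalently the $c_l^2$-coefficient of $t^{-\mathbf{c}}d_i\,t^{\bb+\mathbf{c}}d_j$. It yields $\sum_k t^{\bb}x^{\bs'}p_k\otimes X_{lij}v_k\in W$ with $X_{lij}=\delta_{l,i}E_{l,j}-E_{l,i}E_{l,j}$. These operators come from a coefficient that is symmetric in the shift directions; they are not the antisymmetrized $E_{ik}E_{lj}-E_{lk}E_{ij}$. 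The difference is decisive. Every $X_{lij}$ annihilates each $\bigwedge^r\C^n$, but your $\mathcal E$ does not: on $\C^n$, $(E_{il}E_{lj}-E_{ll}E_{ij})e_j=e_i\neq0$ for $i\neq l$. Your Step 2 dichotomy, ``if $\mathcal E$ acts nontrivially then $W$ is everything'', would therefore prove $T(P,\C^n)$ simple. That contradicts Corollary~\ref{reducible}(2) for $n\geq2$.

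You also separate the tensor factors on the wrong side. You invoke Jacobson density on $V$ for the algebra generated by $\mathcal E$, but nothing makes $V$ irreducible under that subalgebra. The paper uses density on $P$. Since $D_n$ has countable dimension, Dixmier's lemma gives $\operatorname{End}_{D_n}(P)=\C$. Letting the coefficient run over all of $A_n$ and then applying the $d_r$ gives $\sum_k up_k\otimes X_{lij}v_k\in W$ for every $u\in D_n$. With the $p_k$ independent, this yields $P\otimes X_{lij}v_k\subseteq W$.

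The key dichotomy is that $T_1=\{v\in V\mid P\otimes v\subseteq W\}$ is a $\gl_n$-submodule, because $x_jd_i(p\otimes v)=x_jd_ip\otimes v+p\otimes E_{j,i}v$. So $T_1=0$ when $W$ is proper, whence $X_{lij}v_k=0$. Running the same argument on $x_ad_b w$ repeatedly gives $X_{lij}U(\gl_n)v_k=0$, i.e.\ $X_{lij}V=0$. This forces $E_{l,i}^2V=0$ for $l\neq i$ and $(E_{i,i}-E_{i,i}^2)V=0$, and a finite-dimensionality lemma plus a highest-weight check then give an exterior power.

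Your Step 3 runs the other way and is circular. The correction terms $\sum_i d_i(t^{\bb}x^{\bs})p\otimes E_{i,j}v$ lie in $(A_nP')\otimes V$, which you do not yet know lies in $W$.

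Finally, restricting to $\bs=\bo$ and reusing the Liu--Lu--Zhao argument fails. A simple $D_n$-module need not be simple over the subalgebra generated by the $t_i^{\pm1}$ and $d_i$: in $P=A_n$, the subspace $\C[t_1^{\pm1},\dots,t_n^{\pm1}]$ is stable under it. So the density step on $P$ genuinely needs the $x$-variables.
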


\begin{proof}
Suppose that \(T(P,V)\) is reducible. Let \(T'\) be a nonzero proper
\(\mg_n\)-submodule of \(T(P,V)\), and choose a nonzero element
\(\sum_{k=1}^{q}p_k\otimes v_k\in T'\). We prove three claims.

\smallskip
\noindent\textbf{Claim 1.}
For every \(u\in D_n\) and \(1\leq i,j,l\leq n\), one has
$$\sum^{q}_{k=1}up_k\otimes (\delta_{l,i}E_{l,j}-E_{l,i}E_{l,j})v_k\in T'.$$

For any $1\leq i,j,l \leq n$, $m=0,1,2$, $\bb=(b_1,\dots, b_n)\in \Z^n$, $\bs=(s_1,\dots, s_n)\in \Z_{\geq0}^n$ with $s_l\geq2$. Let $\Gamma_m=x^{me_l}d_i t^{\bb}x^{\bs-me_l}d_j\in U(\mg_n)$. Then we have
$$\aligned \
&\Gamma_m(\sum^{q}_{k=1}p_k\otimes v_k)\\
=&\sum^{q}_{k=1}(x^{me_l}d_i)(t^{\bb}x^{\bs-me_l}d_j p_k\otimes v_k)\\
 &+ \sum^{q}_{k=1}\sum^n_{c=1}(x^{me_l}d_i)(b_ct^{\bb}x^{\bs-me_l}+(s_c-m\delta_{c,l})t^{\bb}x^{\bs-me_l-e_c})p_k \otimes E_{c,j}v_k\\
=&\sum^q_{k=1}((b_it^{\bb}x^{\bs}+(s_i-m\delta_{i,l})t^{\bb}x^{\bs-e_i})d_jp_k\otimes v_k \\
&+ t^{\bb}x^{\bs}d_id_jp_k\otimes v_k+ mt^{\bb}x^{\bs-\e_l}d_jp_k\otimes E_{l,i}v_k)\\
&+ \sum^q_{k=1}\sum^n_{c=1}b_c((b_it^{\bb}x^{\bs}+(s_i-m\delta_{i,l})t^{\bb}x^{\bs-e_i})p_k\otimes E_{c,j}v_k\\
&+ t^{\bb}x^{\bs}d_ip_k\otimes E_{c,j}v_k + mt^{\bb}x^{\bs-e_l}p_k\otimes E_{l,i}E_{c,j}v_k)\\
&+ \sum^q_{k=1}\sum^n_{c=1}(s_c-m\delta_{c,l})((b_it^{\bb}x^{\bs-e_l}+(s_i-m\delta_{i,l}-\delta_{i,c})t^{\bb}x^{\bs-e_c-e_i})p_k\otimes E_{c,j}v_k\\
&+ t^{\bb}x^{\bs-e_c}d_ip_k\otimes E_{c,j}v_k + mt^{\bb}x^{\bs-e_l-e_c}p_k\otimes E_{l,i}E_{c,j}v_k)\\
=&m^2\sum^q_{k=1}t^{\bb}x^{\bs-2e_l}p_k\otimes
(\delta_{l,i}E_{l,j}-E_{l,i}E_{l,j})v_k+mu_1+u_0,
\endaligned$$
where $u_0$,$u_1$ are decided by $i,j,l, \bb, \bs$ and do not depend on $m$. Take $m=0,1,2$, we have
\begin{equation}\label{Gamma1}
\frac{1}{2}(\Gamma_2-2\Gamma_1+\Gamma_0)\sum^q_{k=1}p_k\otimes v_k=\sum^q_{k=1}t^{\bb}x^{\bs'}p_k\otimes (\delta_{l,i}E_{l,j}-E_{l,i}E_{l,j})v_k\in T',
\end{equation}
for all $\bb\in \Z^n$, $\bs'\in \Z_{\geq0}^n$, $1\leq i,j,l \leq n$.

For all $1\leq r \leq n$, applying $d_r$ to (\ref{Gamma1}), we get
\begin{equation}\label{Gamma2}
\sum^q_{k=1}d_rt^{\bb}x^{\bs'}p_k\otimes (\delta_{l,i}E_{l,j}-E_{l,i}E_{l,j})v_k\in T', \forall \ \bb\in \Z^n, \bs'\in \Z_{\geq0}^n.
\end{equation}
Since $D_n$ is generated by $A_n$ and $d_1,\dots,d_n$, we have
$$\sum^{q}_{k=1}up_k\otimes (\delta_{l,i}E_{l,j}-E_{l,i}E_{l,j})v_k\in T', \forall \ u\in D_n.$$
\smallskip
\noindent\textbf{Claim 2.}
If \(p_1,\ldots,p_q\) are linearly independent, then
$$(\delta_{l,i}E_{l,j}-E_{l,i}E_{l,j})v_k=0.$$

The algebra \(D_n\) has countable dimension over the uncountable
algebraically closed field \(\C\). Hence
\(\operatorname{End}_{D_n}(P)=\C\) by Dixmier's lemma
\cite{Dixmier}. The Jacobson
density theorem therefore implies that, for every \(p\in P\), there
exists \(u(k,p)\in D_n\) such that
\[
u(k,p)p_i=\delta_{ki}p,\qquad 1\leq i\leq q.
\]
Claim~1 now gives
\[
P\otimes(\delta_{l,i}E_{l,j}-E_{l,i}E_{l,j})v_k\subseteq T'.
\]
Set
\[
T_1=\{v\in V\mid P\otimes v\subseteq T'\}.
\]
For \(v\in T_1\) and \(p\in P\), we have
$$(x_jd_i)(p\otimes v)=(x_jd_ip)\otimes v+p\otimes E_{j,i}v\in T', \forall 1\leq i,j\leq n.$$
Thus \(E_{j,i}v\in T_1\), so \(T_1\) is a \(\gl_n\)-submodule of
\(V\). Since \(V\) is simple, \(T_1=0\) or \(V\). Because \(T'\) is
proper, \(T_1\neq V\); hence \(T_1=0\). Therefore
$$(\delta_{l,i}E_{l,j}-E_{l,i}E_{l,j})v_k=0$$
for all \(1\leq i,j,l\leq n\) and \(1\leq k\leq q\). This proves
Claim~2.

\smallskip
\noindent\textbf{Claim 3.}
For all \(1\leq i,j,l\leq n\),
\[
(\delta_{l,i}E_{l,j}-E_{l,i}E_{l,j})V=0.
\]

For all $1\leq a,b \leq n$, we have
\begin{align*}
(x_ad_b)(\sum^q_{k=1}p_k\otimes v_k)=\sum^q_{k=1}(x_ad_bp_k\otimes v_k + p_k\otimes E_{a,b}v_k)\in T'.
\end{align*}
From Claim 1, we see that
$$\sum^q_{k=1}(ux_ad_bp_k\otimes (\delta_{l,i}E_{l,j}-E_{l,i}E_{l,j})v_k + up_k\otimes (\delta_{l,i}E_{l,j}-E_{l,i}E_{l,j})E_{a,b}v_k)\in T'$$
for all $u\in D_n$. And by Claim 2, we obtain
$$\sum^q_{k=1}up_k\otimes (\delta_{l,i}E_{l,j}-E_{l,i}E_{l,j})E_{a,b}v_k\in T', \forall u\in D_n.$$
By the Jacobson density theorem, we have
$$P\otimes (\delta_{l,i}E_{l,j}-E_{l,i}E_{l,j})E_{a,b}v_k \subseteq T', \forall k=1,\dots,q.$$
Therefore $(\delta_{l,i}E_{l,j}-E_{l,i}E_{l,j})E_{a,b}v_k\in T_1$, i.e.,
$$(\delta_{l,i}E_{l,j}-E_{l,i}E_{l,j})E_{a,b}v_k=0.$$
Repeat the above steps, we have
$$(\delta_{l,i}E_{l,j}-E_{l,i}E_{l,j})U(\gl_n)v_k=0.$$
Since \(V\) is a simple \(\gl_n\)-module, this proves Claim~3.

Observing Claim 3, we get
\begin{equation}\label{locally nil}
(E_{i,i}-E_{i,i}^2)V=E_{l,i}^2V=0, \forall \ 1\leq l\neq i\leq n.
\end{equation}
By \cite[Lemma~2.4(2)]{LZ}, \(V\) is finite-dimensional. Hence \(V\)
is a highest-weight \(\gl_n\)-module. Let \(v_\alpha\) be a
highest-weight vector and write
\[
E_{ii}v_\alpha=\alpha_i v_\alpha.
\]
Equation~\eqref{locally nil} implies that
\(\alpha_i\in\{0,1\}\) for every \(i\). Dominance gives
\(\alpha_1\geq\cdots\geq\alpha_n\), so there is an
\(N\in\{0,1,\ldots,n\}\) such that
\[
\alpha_1=\cdots=\alpha_N=1,
\qquad
\alpha_{N+1}=\cdots=\alpha_n=0.
\]
Consequently,
$V\cong V(\delta_N,N)$,
as required.
\end{proof}

We next give an isomorphism criterion for simple tensor modules.

\begin{proposition}
Let \(P,P'\) be simple \(D_n\)-modules and \(V,V'\) simple
\(\gl_n\)-modules. Assume that
\(V\ncong V(\delta_r,r)\) for \(0\leq r\leq n\). Then
\[
T(P,V)\cong T(P',V')
\]
if and only if \(P\cong P'\) and \(V\cong V'\).
\end{proposition}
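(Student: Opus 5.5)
The "if" direction is immediate: isomorphisms $P\to P'$ and $V\to V'$ tensor to an $(A_n,\mg_n)$-module isomorphism, because the action of $\sigma(\mg_n)$ and of $A_n$ on $T(P,V)$ is given by the same formulas on both sides. For the converse, let $\phi: T(P,V)\to T(P',V')$ be a $\mg_n$-module isomorphism. Then $T(P',V')$ is simple, so by Theorem~\ref{simplicity} we also have $V'\ncong V(\delta_r,r)$ for all $r$. The plan is to recover $P$ and $V$ intrinsically from the $\mg_n$-module $T(P,V)$, using only the operators already introduced in the proof of Theorem~\ref{simplicity}.

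\textbf{Step 1: recover $V$ up to the action of $D_n$.} Use the second-difference operators $\Gamma_m=x^{me_l}d_i\,t^{\bb}x^{\bs-me_l}d_j$ from Claim~1. The computation there shows that the element
\[
\Omega_{l,i,j,\bb,\bs'}=\tfrac12(\Gamma_2-2\Gamma_1+\Gamma_0)\in U(\mg_n)
\]
acts on every $T(P,V)$ as $t^{\bb}x^{\bs'}\otimes(\delta_{l,i}E_{l,j}-E_{l,i}E_{l,j})$. This holds uniformly in $P$ and $V$, since the computation never used properties of $P$ or $V$.

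Similarly, $x_ad_b$ acts as $x_ad_b\otimes 1+1\otimes E_{a,b}$. Together with the $\Omega$'s this yields the elements
\[
\Omega\cdot x_ad_b-(x_ad_b)\cdot\Omega ,
\]
whose action is computable. The cleaner route is to work with the associative subalgebra of $\mathrm{End}$ generated by the image of $U(\mg_n)$. I would show that it contains $A_n\otimes Y$ for every $Y$ in the two-sided ideal $J$ of $U(\gl_n)$ generated by the elements $Y_{l,i,j}=\delta_{l,i}E_{l,j}-E_{l,i}E_{l,j}$. The reason is that commutators with $x_ad_b$ produce $t^{\bb}x^{\bs'}\otimes[E_{a,b},Y]$ up to terms of the form $x_ad_b t^{\bb}x^{\bs'}\otimes Y$, and the latter are again of the allowed shape.

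Since $V\ncong V(\delta_r,r)$, Claim~3 fails, so $JV\neq0$, and hence $JV=V$ by simplicity.

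\textbf{Step 2: separate the factors via annihilators.} Take $w=\sum p_k\otimes v_k$ with the $p_k$ linearly independent, and compare the $U(\mg_n)$-annihilators of $w$ and of $\phi(w)$. Following Claim~2, the density theorem (with $\mathrm{End}_{D_n}(P)=\C$ by Dixmier's lemma) lets us isolate elementary tensors $p\otimes Yv$ inside $U(\mg_n)w$.

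I would then define, inside $T=T(P,V)$, the subspace
\[
T^{Y}=\bigl(A_n\otimes Y\bigr)T \quad\text{(images of such operators)}.
\]
For simple $P$ and $JV=V$, this equals $P\otimes YV$ for each $Y\in J$.

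Next, consider the operators $t^{\bb}x^{\bs}d_j$. On $P\otimes YV$ they act as $t^{\bb}x^{\bs}d_j\otimes1$ plus correction terms $d_i(t^{\bb}x^{\bs})\otimes E_{i,j}$. Forming $\mathrm{ad}$-combinations of these with the operators $t^{\bb}x^{\bs}\otimes Y$ isolates operators of the form $u\otimes 1$, $u\in D_n$, in the commutant structure. From this I expect the centralizer of the $D_n$-type operators to be exactly $\mathrm{End}_{\gl_n}$-data on $V$. Concretely, $P$ is recovered as the $D_n$-module $\mathrm{Hom}_{\mathcal{J}}(V_0,T)$, where $\mathcal{J}$ is the image of $J$ in $\mathrm{End}(T)$; dually, $V$ is recovered from the $\gl_n$-action via $x_ad_b$ on $\mathrm{Hom}_{D_n}(P,T)$.

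\textbf{Step 3: transport along $\phi$.} The isomorphism $\phi$ intertwines all the intrinsically defined operators ($\Omega$'s, their commutators, and the isolated $u\otimes1$ operators). Hence $\phi$ is an isomorphism of $D_n\otimes U(\gl_n)$-modules $P\otimes V\to P'\otimes V'$. Since $\mathrm{End}_{D_n}(P)=\C$, a simple module over $D_n\otimes U(\gl_n)$ of the form $P\otimes V$ determines its factors: taking $\mathrm{Hom}_{D_n}(P,-)$ gives $V\cong\mathrm{Hom}_{D_n}(P,P'\otimes V')$. This space is nonzero only if $P\cong P'$, by density arguments, and in that case it equals $V'$.

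\textbf{Main obstacle.} The hard step is Step 2: showing that the image of $U(\mg_n)$ really contains the operators $u\otimes1$, or equivalently that $\phi$ commutes with $D_n\otimes1$. I would first try to show that $t^{\bb}x^{\bs}\otimes1$ lies in the associative closure. The idea is that $JV=V$ and the Casimir-type sums $\sum_{Y}$ over a spanning set of $J$ act invertibly after applying density in $V$ (Jacobson density over $U(\gl_n)$, using $\mathrm{End}_{\gl_n}(V)=\C$). Given $t^{\bb}x^{\bs}\otimes1$, the operator $d_j\otimes1$ then follows by subtracting $\sum_i d_i(x_j)\otimes E_{i,j}$-type corrections from the action of $x^{\bs}d_j$.
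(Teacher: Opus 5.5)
Your Step~1 is fine. For $u\in D_n$ and $Y$ in the two-sided ideal $J$ there is $\omega\in U(\mg_n)$ with $\sigma(\omega)=u\otimes Y$, and $JV=V$. (To reach all of $J$, multiply $u\otimes Y_{l,i,j}$ on either side by $x_ad_b$; commutators alone only give the $\mathrm{ad}$-stable span.)

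The gap is the step you flag as the main obstacle, which Step~3 then simply assumes. You treat ``the image of $U(\mg_n)$ contains $u\otimes1$'' and ``$\phi$ commutes with $D_n\otimes1$'' as equivalent, but they are not. $\phi$ only intertwines the action of each $\omega\in U(\mg_n)$. Suppose $\omega$ acts as $u\otimes1$ on $T(P,V)$, say $\sigma(\omega)=u\otimes Y$ with $Y\in J$ acting as the identity on a finite-dimensional $V$. Then $\omega$ acts on $T(P',V')$ as $u\otimes Y|_{V'}$, which is not $u\otimes1$ a priori. No choice of $\omega$ avoids this uniformly: an $\omega$ acting as $t^{\bb}\otimes1$, $\bb\neq\bo$, on every tensor module would send $1\in A_n=T(A_n,V(\delta_0,0))$, which $\mg_n$ annihilates, to $t^{\bb}$. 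Your ``Casimir-type sums act invertibly'' idea does not address this transfer. The constructions in Step~2 ($\mathrm{Hom}_{\mathcal J}(V_0,T)$, the centralizer you ``expect'') are not arguments.

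The gap can be closed as follows.
Take a finite-dimensional $W\subseteq V$ with basis $w_1,\dots,w_m$, and pick $Y_1\in J$, $v_1\in V$ with $Y_1v_1\neq0$. By density (using $\mathrm{End}_{\gl_n}(V)=\C$), choose $a_k,b_k\in U(\gl_n)$ with $a_kw_j=\delta_{jk}v_1$ and $b_kY_1v_1=w_k$. Then $Y=\sum_kb_kY_1a_k\in J$ is the identity on $W$.
Let $\sigma(\omega_u)=u\otimes Y$ and $x\in P\otimes W$. Then $\phi(x)=\phi(\omega_1x)=(1\otimes Y)\phi(x)$. Hence $\phi((u\otimes1)x)=\phi(\omega_ux)=\omega_u\phi(x)=(u\otimes1)(1\otimes Y)\phi(x)=(u\otimes1)\phi(x)$.
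The action of $x_ad_b$ then gives $1\otimes E_{a,b}$-equivariance, and your Step~3 applies.

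The paper needs none of this. It applies the operators $u\otimes Y$ to $\phi(p\otimes v)=\sum_kp_k\otimes v_k$ and uses density on $P'$ to obtain $\phi(u\bar p\otimes\bar v)=u\tilde p\otimes\tilde v$ for all $u\in D_n$. So $\mathrm{Ann}_{D_n}(\bar p)=\mathrm{Ann}_{D_n}(\tilde p)$, and $P\cong P'$ via some $f'$ with $\phi(p\otimes\bar v)=f'(p)\otimes\tilde v$. The operators $x_id_j$ propagate this to $\phi(p\otimes\mu\bar v)=f'(p)\otimes\mu\tilde v$ for $\mu\in U(\gl_n)$. This gives $\mathrm{Ann}_{U(\gl_n)}(\bar v)=\mathrm{Ann}_{U(\gl_n)}(\tilde v)$ and $V\cong V'$. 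You already isolated elementary tensors in Step~2; comparing annihilators there would have finished the proof.
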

\begin{proof}
The sufficiency is obvious.
Assume that
$$\aligned \
f: T(P,V) &\rightarrow T(P',V')\\
p\otimes v &\mapsto \sum^m_{k=1}p_k\otimes v_k,
\endaligned$$
is an isomorphism of \(\mg_n\)-modules, where
\(p_1,\ldots,p_m\) are linearly independent. By Claim~1 in the proof
of Theorem~\ref{simplicity}, we have
\begin{equation}\label{mapf}
f(up\otimes (\delta_{l,i}E_{l,j}-E_{l,i}E_{l,j})v)=\sum^m_{k=1}up_k\otimes (\delta_{l,i}E_{l,j}-E_{l,i}E_{l,j})v_k,
\end{equation}
for every \(u\in D_n\) and \(1\leq i,j,l\leq n\).
Since \(V\ncong V(\delta_r,r)\), Claim~3 in the proof of
Theorem~\ref{simplicity} shows that we may choose \(i,j,l\) and
\(v\in V\) such that
\[
(\delta_{l,i}E_{l,j}-E_{l,i}E_{l,j})v\neq0.
\]
Equation~\eqref{mapf} then implies that the corresponding expression
is nonzero for at least one \(v_k\); relabeling, assume it is nonzero
for \(v_1\). Since \(\operatorname{End}_{D_n}(P')=\C\), the Jacobson
density theorem gives \(u_1\in D_n\) such that
\(u_1p_k=\delta_{1k}p_1\). Therefore
$$f(u_1p\otimes (\delta_{l,i}E_{l,j}-E_{l,i}E_{l,j})v)=u_1p_1\otimes (\delta_{l,i}E_{l,j}-E_{l,i}E_{l,j})v_1\neq0,$$
which implies that \(u_1p\neq0\). Using \eqref{mapf} once more gives
$$f(u u_1p\otimes (\delta_{l,i}E_{l,j}-E_{l,i}E_{l,j})v)=up_1\otimes (\delta_{l,i}E_{l,j}-E_{l,i}E_{l,j})v_1.$$
Denote $u_1p=\bar{p}$, $(\delta_{l,i}E_{l,j}-E_{l,i}E_{l,j})v=\bar{v}$, $p_1=\tilde{p}$, $(\delta_{l,i}E_{l,j}-E_{l,i}E_{l,j})v_1=\tilde{v}$, then
$$f(u\bar{p}\otimes \bar{v})=u\tilde{p}\otimes \tilde{v}, \forall u\in D_n.$$
So $\text{Ann}_{D_n}(\bar{p})=\text{Ann}_{D_n}(\tilde{p})$. Since $P$ and $P'$ are irreducible, $P\cong D_n/\text{Ann}_{D_n}(\bar{p})=D_n/\text{Ann}_{D_n}(\tilde{p})\cong P'$.

We denote the map $f':P \rightarrow P'$ with $f'(u\bar{p})=u\tilde{p}$, clearly this is a $D_n$-module isomorphism. And we have $f(p\otimes \bar{v})=f'(p)\otimes \tilde{v}$. Note that
$$\aligned \
x_id_jf(p\otimes \bar{v})=&x_id_jf'(p)\otimes \tilde{v}+ f'(p)\otimes E_{i,j}\tilde{v}\\
=&f(x_id_j(p\otimes \bar{v}))\\
=&f'(x_id_jp)\otimes \tilde{v}+ f(p\otimes E_{i,j}\bar{v}),
\endaligned$$
then \(f(p\otimes E_{i,j}\bar v)=f'(p)\otimes E_{i,j}\tilde v\)
for every \(p\in P\) and \(1\leq i,j\leq n\). It follows that
\(f(p\otimes\mu\bar v)=f'(p)\otimes\mu\tilde v\) for all
\(\mu\in U(\gl_n)\). Therefore,
\[
\operatorname{Ann}_{U(\gl_n)}(\bar v)
=\operatorname{Ann}_{U(\gl_n)}(\tilde v).
\]
Since \(V\) and \(V'\) are simple, this equality yields
\[
V\cong U(\gl_n)/\operatorname{Ann}_{U(\gl_n)}(\bar v)
\cong U(\gl_n)/\operatorname{Ann}_{U(\gl_n)}(\tilde v)
\cong V'.
\]
\end{proof}

\subsection{Structure of $T(P,V)$ when $V\cong  V(\delta_{r},r)$ }

We now consider the exceptional modules
\(V\cong V(\delta_r,r)\), \(0\leq r\leq n\). More generally,
\(V(\delta_0,b)=\mathbb Cv\) denotes the one-dimensional
\(\gl_n\)-module on which the identity matrix acts as the scalar \(b\).
As a vector space, \(T(P,V(\delta_0,b))\cong P\), with action
$$t^{\ba}x^{\bm}d_j(p)=t^{\ba}x^{\bm}d_jp+\frac{b}{n}(a_jt^{\ba}x^{\bm}+m_jt^{\ba}x^{\bm-e_j})p, \forall\ p\in P.$$

\begin{lemma}\label{homomorphismpi}
Let $P$ be a $D_n$-module, then
$$\aligned
\pi_k: T(P,V(\delta_k,k)) &\rightarrow T(P,V(\delta_{k+1},k+1))\\
p\otimes v &\rightarrow \sum^n_{i=1}d_ip\otimes e_i \wedge v, \forall\ p\in P, v\in V(\delta_k,k)
\endaligned$$
is a $\mg_n$-module homomorphism, where $k=0,1,\dots, n-1$. Furthermore, we have $\pi_{k+1}\pi_{k}=0$.
\end{lemma}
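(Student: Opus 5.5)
The plan is a direct verification. The map $\pi_k$ is clearly linear and commutes with the action of $A_n$, since $d_i$ commutes with nothing relevant on the $V$ side and $A_n$ acts only on the first factor. Note, though, that $d_i$ does not commute with $A_n$ in $D_n$, so strictly only the $\mg_n$-action is asserted to be preserved. It therefore suffices to check that $\pi_k$ intertwines the action of each element $X=f d_j$, where $f=t^{\bb}x^{\bs}$. Recall that
\[
X(p\otimes v)=f d_j p\otimes v+\sum_{i}d_i(f)\,p\otimes E_{i,j}v .
\]
The key structural input on the $\gl_n$ side is that on $\bigwedge^{\bullet}(\C^n)$ the operator $E_{i,j}$ acts as $e_i\wedge\iota_j$, where $\iota_j$ is contraction with the $j$-th dual basis vector. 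From this one gets
\[
E_{i,j}(e_l\wedge v)=\delta_{j,l}\,e_i\wedge v+e_l\wedge E_{i,j}v ,
\]
and, more importantly, that $e_l\wedge E_{i,j}v=e_l\wedge e_i\wedge\iota_j v$ is antisymmetric in $(i,l)$.

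Next I compute both sides. For $\pi_k(X(p\otimes v))$ I use the commutation relation $d_l f=f d_l+d_l(f)$ in $D_n$ to move $d_l$ past $f$ and past $d_i(f)$. This gives
\[
\sum_l f d_jd_l p\otimes e_l\wedge v+\sum_l d_l(f)d_jp\otimes e_l\wedge v+\sum_{i,l} d_i(f)d_lp\otimes e_l\wedge E_{i,j}v+\sum_{i,l} d_ld_i(f)\,p\otimes e_l\wedge E_{i,j}v .
\]
For $X\pi_k(p\otimes v)$ the formula for $E_{i,j}(e_l\wedge v)$ gives
\[
\sum_l fd_jd_lp\otimes e_l\wedge v+\sum_i d_i(f)d_jp\otimes e_i\wedge v+\sum_{i,l}d_i(f)d_lp\otimes e_l\wedge E_{i,j}v .
\]
The first three groups of terms match, after using $[d_j,d_l]=0$ and renaming indices. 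What remains is
\[
\sum_{i,l}d_ld_i(f)\,p\otimes e_l\wedge E_{i,j}v .
\]
The coefficient $d_ld_i(f)$ is symmetric in $(i,l)$ because the $d$'s commute as derivations. The vector $e_l\wedge E_{i,j}v$ is antisymmetric in $(i,l)$. Hence this sum vanishes, and $\pi_k$ is a $\mg_n$-homomorphism.

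Finally, $\pi_{k+1}\pi_k(p\otimes v)=\sum_{i,l}d_id_lp\otimes e_i\wedge e_l\wedge v$, which is zero by the same symmetric-versus-antisymmetric argument. The only step needing care is the bookkeeping in the $D_n$ commutations and the identity $E_{i,j}=e_i\wedge\iota_j$. I expect the cancellation of the second-order term $d_ld_i(f)$ to be the main point, and I would handle it exactly by this symmetry observation.
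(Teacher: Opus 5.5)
Your proof is correct and follows essentially the same route as the paper. Both check the intertwining directly on $t^{\bb}x^{\bs}d_j$, and the leftover second-order term $\sum_{i,l}d_l(d_i(f))p\otimes e_l\wedge E_{i,j}v$ vanishes by the antisymmetry $e_l\wedge E_{i,j}v=-e_i\wedge E_{l,j}v$, which is exactly the identity the paper uses. The vanishing of $\pi_{k+1}\pi_k$ follows from the same symmetric-versus-antisymmetric observation.

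One small correction: your opening claim that $\pi_k$ commutes with the $A_n$-action is false, since $\pi_k(fp\otimes v)-f\pi_k(p\otimes v)=\sum_i d_i(f)p\otimes e_i\wedge v$. Nothing in your argument uses it, so the proof is unaffected.
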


\begin{proof}
It suffices to prove that
$$\pi_k(t^{\bb}x^{\bs}d_j(p\otimes v))=t^{\bb}x^{\bs}d_j\pi_k(p\otimes v), \forall\ \bb\in \Z^n, \bs\in \Z^n_{\geq0}, 1\leq j\leq n.$$
Note that
$$\aligned
\pi_k(t^{\bb}x^{\bs}d_j(p\otimes v))&=\pi_k(t^{\bb}x^{\bs}d_jp\otimes v+\sum^n_{l=1}d_l(t^{\bb}x^{\bs})p\otimes E_{l,j}v)\\
&=\sum^n_{i=1}d_it^{\bb}x^{\bs}d_jp\otimes e_i\wedge v+\sum^n_{i=1}\sum^n_{l=1}d_id_l(t^{\bb}x^{\bs})p\otimes e_i\wedge E_{l,j}v.
\endaligned$$
Using $e_i\wedge E_{l,j}v=-e_l\wedge E_{i,j}v$ for all $1\leq i,l\leq n$, we have
$$\aligned
&t^{\bb}x^{\bs}d_j\pi_k(p\otimes v)\\
=&t^{\bb}x^{\bs}d_j(\sum^n_{i=1}d_ip\otimes e_i \wedge v)\\
=&\sum^n_{i=1}t^{\bb}x^{\bs}d_jd_ip\otimes e_i \wedge v+\sum^n_{i=1}\sum^n_{l=1}d_l(t^{\bb}x^{\bs})d_i p\otimes E_{l,j}(e_i\wedge v)\\
=&\sum^n_{i=1}(d_it^{\bb}x^{\bs}d_jp\otimes e_i \wedge v-d_i(t^{\bb}x^{\bs})d_j p\otimes e_i \wedge v)+\sum^n_{l=1}d_l(t^{\bb}x^{\bs})d_jp\otimes e_l\wedge v\\
&+\sum^n_{i=1}\sum^n_{l=1}d_id_l(t^{\bb}x^{\bs})p\otimes e_i\wedge(E_{l,j}v)-\sum^n_{i=1}\sum^n_{l=1}d_i(d_l(t^{\bb}x^{\bs}))p\otimes e_i\wedge(E_{l,j}v)\\
=&\sum^n_{i=1}d_it^{\bb}x^{\bs}d_jp\otimes e_i\wedge v+\sum^n_{i=1}\sum^n_{l=1}d_id_l(t^{\bb}x^{\bs})p\otimes e_i\wedge E_{l,j}v\\
=&\pi_k(t^{\bb}x^{\bs}d_j(p\otimes v)).
\endaligned$$
It follows immediately that
\(\pi_{k+1}\pi_k(p\otimes v)
=\sum_{j=1}^n\sum_{i=1}^n d_jd_ip\otimes
e_j\wedge e_i\wedge v=0\).
\end{proof}

\begin{corollary}\label{reducible}
Let \(P\) be a simple \(D_n\)-module. Then
\begin{enumerate}
\item \(\pi_k\neq0\) for \(0\leq k\leq n-1\).
\item\(T(P,V(\delta_r,r))\) is reducible for \(1\leq r\leq n-1\).
\end{enumerate}
\end{corollary}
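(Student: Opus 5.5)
For part (1), fix $0\le k\le n-1$. I will show that $\pi_k$ does not vanish on a single well-chosen simple tensor. Since $k<n$, choose $v=e_{i_1}\wedge\cdots\wedge e_{i_k}\neq0$ in $V(\delta_k,k)\cong\bigwedge^k(\C^n)$, and choose an index $i_0\notin\{i_1,\dots,i_k\}$. For $k=0$, take $v=1$. The vectors $e_i\wedge v$ with $i\notin\{i_1,\dots,i_k\}$ are linearly independent, and $e_i\wedge v=0$ otherwise. Hence
\[
\pi_k(p\otimes v)=\sum_{i\notin\{i_1,\dots,i_k\}} d_ip\otimes e_i\wedge v,
\]
and this vanishes only if $d_ip=0$ for every $i\notin\{i_1,\dots,i_k\}$. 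In particular it would force $d_{i_0}p=0$. So it suffices to find some $p\in P$ with $d_{i_0}p\neq0$. Equivalently, I must rule out $d_{i_0}P=0$.

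Suppose $d_{i_0}P=0$. Apply the relation $[d_{i_0},t^{\ba}x^{\bm}]=a_{i_0}t^{\ba}x^{\bm}+m_{i_0}t^{\ba}x^{\bm-e_{i_0}}$ to $P$. Taking $\bm=0$ and $\ba=e_{i_0}$ gives $0=d_{i_0}t_{i_0}p-t_{i_0}d_{i_0}p=t_{i_0}p$ for all $p\in P$. Since $t_{i_0}$ is invertible in $A_n\subset D_n$, this gives $p=t_{i_0}^{-1}t_{i_0}p=0$, so $P=0$. This contradicts the simplicity, hence nonzeroness, of $P$. Alternatively one can take $x_{i_0}$ and get $p=0$ directly. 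Thus $\pi_k\neq0$.

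For part (2), let $1\le r\le n-1$. By Lemma~\ref{homomorphismpi} and part (1), $\pi_r\colon T(P,V(\delta_r,r))\to T(P,V(\delta_{r+1},r+1))$ is a nonzero $\mg_n$-homomorphism, so $\ker\pi_r$ is a proper submodule. It remains to show $\ker\pi_r\neq0$, and this is the step that needs care. I will use $\pi_r\pi_{r-1}=0$, which holds because $r-1\ge0$: it gives $\operatorname{Im}\pi_{r-1}\subseteq\ker\pi_r$, and $\operatorname{Im}\pi_{r-1}\neq0$ by part (1) applied to $k=r-1\le n-1$. Hence $0\neq\operatorname{Im}\pi_{r-1}\subseteq\ker\pi_r\subsetneq T(P,V(\delta_r,r))$. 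Therefore $\ker\pi_r$, and equally $\operatorname{Im}\pi_{r-1}$, is a nonzero proper submodule, and $T(P,V(\delta_r,r))$ is reducible. The restriction $1\le r\le n-1$ is exactly what makes both $\pi_{r-1}$ and $\pi_r$ available.
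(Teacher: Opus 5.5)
Your proof is correct and follows essentially the same route as the paper. The only difference is cosmetic: you rule out $d_{i_0}P=0$ via $[d_{i_0},t_{i_0}]=t_{i_0}$ and the invertibility of $t_{i_0}$, where the paper uses $d_i(x_ip)=x_id_ip+p$. Otherwise the argument matches: you read off $\pi_k\neq0$ from the expansion of $\pi_k(p\otimes e_{i_1}\wedge\cdots\wedge e_{i_k})$, and then use $\pi_r\pi_{r-1}=0$ to get $0\neq\operatorname{Im}\pi_{r-1}\subseteq\ker\pi_r\subsetneq T(P,V(\delta_r,r))$.
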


\begin{proof}
(1) Suppose that $d_i(P)=0$ for some $1\leq i\leq n$. Then for any $p\in P$, we have
$$0=d_i(x_ip)=x_i(d_ip)+p=p,$$
which would imply \(P=0\), a contradiction. Thus \(d_i(P)\neq0\)
for every \(i\). From
$$\pi_k(p\otimes e_{i_1}\wedge\dots \wedge e_{i_k})= \sum_{i\neq i_1,\dots,i_k}d_ip\otimes e_i \wedge  e_{i_1}\wedge\dots \wedge e_{i_k},$$
we can see that \(\pi_k\neq0\) for \(0\leq k\leq n-1\).

(2)
By Lemma~\ref{homomorphismpi},
\(\pi_r\pi_{r-1}=0\) for \(1\leq r\leq n-1\). Hence
$
\operatorname{Im}\pi_{r-1}\subseteq\ker\pi_r.
$
By (1), both maps are nonzero.
Therefore \(\operatorname{Im}\pi_{r-1}\) is a nonzero proper
submodule of \(T(P,V(\delta_r,r))\).
\end{proof}

Considering the de Rham complex:
\begin{equation}\aligned \label{complex}
0 \rightarrow T(P,V(\delta_0,0)) &\xrightarrow{\pi_0} T(P,V(\delta_1,1)) \xrightarrow{\pi_1} \cdots\\
 &\xrightarrow{\pi_{n-2}} T(P,V(\delta_{n-1},n-1)) \xrightarrow{\pi_{n-1}} T(P,V(\delta_n,n)) \rightarrow \{0\}.
\endaligned\end{equation}
Let $L_n(P,r)=\pi_{r-1}(T(P,V(\delta_{r-1},r-1)))$ for $r=1,\dots, n$ and set $L_n(P,0)=0$. Note that $L_n(P,r)$ is spanned by
\begin{equation}\label{L_n}
\sum^n_{l=1}d_lp\otimes e_l\wedge e_{i_2}\wedge e_{i_3}\wedge \cdots \wedge e_{i_r}=\sum^n_{l=1}d_lp\otimes E_{l,j}v,
\end{equation}
where $p\in P$ and $v=e_j\wedge e_{i_2}\wedge e_{i_3}\wedge \cdots \wedge e_{i_r}\neq0$.\\
Let
$$\tilde{L}_n(P,r)=\{u\in T(P,V(\delta_r,r)) \mid \mg_nu\subseteq L_n(P,r)\},$$
which is the maximal submodule of $T(P,V(\delta_r,r))$
such that $\tilde{L}_n(P,r)/L_n(P,r)$ is a trivial $\mg_n$-module.
Clearly \(L_n(P,r)\subseteq\tilde L_n(P,r)\). Thus
$$\tilde{L}_n(P,r)\subseteq \{u\in T(P,V(\delta_r,r)) \mid \mg_nu\subseteq \tilde{L}_n(P,r)\}.$$
Since $\mg_n$ is a simple Lie algebra, $\mg_n=[\mg_n,\mg_n]$, we have
$$\tilde{L}_n(P,r)\supseteq \{u\in T(P,V(\delta_r,r)) \mid \mg_nu\subseteq \tilde{L}_n(P,r)\}.$$
Then
$$\tilde{L}_n(P,r)= \{u\in T(P,V(\delta_r,r)) \mid \mg_nu\subseteq \tilde{L}_n(P,r)\}.$$
.

\begin{lemma}\label{irreducible}
Let \(P\) be a simple \(D_n\)-module. Then
\[
T(P,V(\delta_r,r))/\tilde L_n(P,r)
\]
is either simple or zero for \(0\leq r\leq n\).
\end{lemma}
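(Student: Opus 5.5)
Take a submodule $T'$ of $T=T(P,V(\delta_r,r))$ with $\tilde L_n(P,r)\subseteq T'$ and $T'\neq\tilde L_n(P,r)$. We show $T'=T$. Pick $w=\sum_{k=1}^q p_k\otimes v_k\in T'\setminus\tilde L_n(P,r)$ with $p_1,\dots,p_q$ linearly independent. Since $\tilde L_n(P,r)$ is exactly the set of $u$ with $\mg_n u\subseteq\tilde L_n(P,r)$, we may, after replacing $w$ by $X w$ for a suitable $X\in\mg_n$, assume $w\notin \tilde L_n(P,r)$ still and work modulo $L_n(P,r)$. Claim 1 in the proof of Theorem~\ref{simplicity} gives nothing here, because the operators $\delta_{l,i}E_{l,j}-E_{l,i}E_{l,j}$ vanish on $\bigwedge^r(\C^n)$ (this is Claim 3 run backwards). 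So we redo the trick with the next-order terms. Let $\Gamma_m=x^{me_l}d_i\,t^{\bb}x^{\bs-me_l}d_j$ as before. Since the $m^2$-coefficient vanishes, $\Gamma_1-\Gamma_0$ isolates the coefficient $u_1$. Computing $u_1$ explicitly, it has the form
\[
\sum_k t^{\bb}x^{\bs-e_l}d_jp_k\otimes E_{l,i}v_k+\sum_k\sum_c(\text{coefficients})\,t^{\bb}x^{\bs-e_l-e_c}p_k\otimes E_{l,i}E_{c,j}v_k+\cdots .
\]
Using the freedom in $\bb,\bs$ (again comparing polynomial dependence on $b_c$ and $s_c$), we then extract an element of $T'$ of the form
\[
\sum_k a\,d_jp_k\otimes E_{l,i}v_k+\sum_k\sum_c d_c(a)\,p_k\otimes E_{l,i}E_{c,j}v_k,\qquad a\in A_n .
\]
Modulo the spanning set \eqref{L_n} of $L_n(P,r)$, this should reduce to elements $\sum_k a\,p_k\otimes Y v_k$ for suitable $Y\in U(\gl_n)$.

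\textbf{Step 1.} The key intermediate goal is that
\[
\sum_k u p_k\otimes v_k\in T'\quad\text{for all } u\in D_n,
\]
or at least the same statement modulo $L_n(P,r)$. Once we have this, the Jacobson density theorem (with $\operatorname{End}_{D_n}(P)=\C$ by Dixmier's lemma, as in Claim 2) gives $P\otimes v_1\subseteq T'+L_n(P,r)=T'$ for some $v_1\neq0$. The set $T_1=\{v\mid P\otimes v\subseteq T'\}$ is then a nonzero $\gl_n$-submodule, by the $x_id_j$ computation in Claim 2. Since $V(\delta_r,r)$ is simple, $T_1=V$ and so $T'=T$.

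\textbf{Step 2.} To reach Step 1, note that $A_n$ does not act on $T$ through $\mg_n$. The elements $\sigma(a d_j)$ act as $a d_j\otimes1+\sum_i d_i(a)\otimes E_{i,j}$. Forming suitable combinations over $j$ and contracting with $e_j\wedge$, the correction terms $\sum_i d_i(a)p\otimes E_{i,j}v$ become exactly elements of the form \eqref{L_n} lying in $L_n(P,r)$. Then modulo $L_n(P,r)$ the operator $a d_j$ acts as $a d_j\otimes 1$ on the relevant vectors. Combined with the leading-term isolation, this lets us generate $D_n$ acting on the first tensor factor.

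The main obstacle is Step 2: showing that modulo $L_n(P,r)$ (and using that $w\notin\tilde L_n(P,r)$), the $\mg_n$-action realises enough of $D_n\otimes 1$. I would first try to prove directly that $\sum_k d_c(a)p_k\otimes E_{c,j}v_k$-type terms lie in $L_n(P,r)$ after antisymmetrisation. If that fails, I would handle $r=0$ and $r=n$ separately using the explicit one-dimensional formula for $T(P,V(\delta_0,b))$, and use induction along the complex \eqref{complex} for the middle cases.
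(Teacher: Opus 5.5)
Your endgame (Dixmier's lemma, density, then the $\gl_n$-submodule $\{v\mid P\otimes v\subseteq T'\}$) is how the paper finishes. However, your proposal never produces a nonzero vector on which $D_n\otimes 1$ acts inside $T'$. Step 2 is explicitly left open, and the one concrete claim you make there is false.

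The correction term $\sum_i d_i(a)p\otimes E_{i,j}v$ does \emph{not} lie in $L_n(P,r)$ in general. The spanning elements \eqref{L_n} are $\sum_l d_lp'\otimes E_{l,j}v$, with $d_l$ acting on $p'\in P$; your term instead multiplies by the function $d_l(a)$. Take $n\geq2$, $r=1$, $P=A_n$, $a=x_1$, $p=x_2$, $v=e_1$, $j=1$. Then $x_1d_1(x_2\otimes e_1)=x_2\otimes e_1$, while $(x_1d_1\otimes1)(x_2\otimes e_1)=0$. The difference $x_2\otimes e_1$ is not of the form $\sum_l d_lf\otimes e_l$: that would require $d_1f=x_2$ and $d_2f=0$, contradicting $d_2d_1f=d_1d_2f$. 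So $ad_j$ does not act as $ad_j\otimes 1$ modulo $L_n(P,r)$.

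Relatedly, your Step~1 target, $(u\otimes 1)w\in T'$ for all $u\in D_n$ with $w$ itself, is essentially equivalent to the lemma, and nothing in your argument produces it. The $\Gamma_1-\Gamma_0$ extraction, and the assertion that it ``should reduce'' to $\sum_k a p_k\otimes Yv_k$, are unjustified and unnecessary.

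The missing idea is the Leibniz rule in $P$: $d_i(a)p=(d_ia)p-a(d_ip)$, where $d_ia\in D_n$. It gives
\[
ad_j(p\otimes v)=\sum_{i=1}^n a\,d_ip\otimes(\delta_{i,j}v-E_{i,j}v)+\sum_{i=1}^n (d_ia)p\otimes E_{i,j}v .
\]
The last sum is \eqref{L_n} applied to $ap$, so it lies in $L_n(P,r)\subseteq T'$.

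Now set $w_j=\sum_{i,k}d_ip_k\otimes(\delta_{i,j}v_k-E_{i,j}v_k)$. For every $a\in A_n$ you get $(a\otimes 1)w_j\equiv ad_jw$ modulo $L_n(P,r)$, hence $(A_n\otimes1)w_j\subseteq T'$. Since $d_l$ acts on $T(P,V(\delta_r,r))$ as $d_l\otimes 1$, in fact $(D_n\otimes 1)w_j\subseteq T'$.

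Your instinct to pass to $Xw$ is what secures nonvanishing. Choose $X=t^{\bb}x^{\bs}d_j$ with $Xw\notin\tilde L_n(P,r)$. Then $(t^{\bb}x^{\bs}\otimes1)w_j\equiv Xw$ is not in $L_n(P,r)$, in particular nonzero. Density is applied to this transformed vector, not to $w$.

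This is the paper's proof. It needs no second-order $\Gamma$-computation, no separate treatment of $r=0,n$, and no induction along \eqref{complex}.
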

\begin{proof}
If $T(P,V(\delta_r,r))/\tilde{L}_n(P,r)\neq0$, let $\tilde{T}$ be a $\mg_n$-submodule of $T(P,V(\delta_r,r))$ with $\tilde{L}_n(P,r)\subsetneq \tilde{T}$. We only need to prove that
$\tilde{T}=T(P,V(\delta_r,r))$.

Take $v=\sum^q_{k=1}p_k\otimes v_k\in \tilde{T} \setminus \tilde{L}_n(P,r)$, then by the definition of $\tilde{L}_n(P,r)$, there exists some $t^{\bb}x^{\bs}d_j\in \mg_n$ such that $t^{\bb}x^{\bs}d_j v\notin \tilde{L}_n(P,r)$. For any $\ba\in \Z^n$, $\bm\in \Z^n_{\geq0}$, we have
\begin{equation}\label{mod L}
\aligned \
t^{\ba+\bb}x^{\bm+\bs}d_j v=&\sum^n_{i=1}\sum^q_{k=1}t^{\ba+\bb}x^{\bm+\bs}d_ip_k\otimes (\delta_{j,i}v_k-E_{i,j}v_k)\\
&+\sum^n_{i=1}\sum^q_{k=1}d_it^{\ba+\bb}x^{\bm+\bs}p_k\otimes E_{i,j}v_k,
\endaligned
\end{equation}
note that the second term belongs to $\tilde{L}_n(P,r)$, since (\ref{L_n}). Therefore
\begin{equation}\label{belong to V}
\sum^n_{i=1}\sum^q_{k=1}t^{\ba+\bb}x^{\bm+\bs}d_ip_k\otimes (\delta_{j,i}v_k-E_{i,j}v_k)\in \tilde{T}.
\end{equation}
When $\ba=\bm=\mathbf{0}$, (\ref{mod L}) implies
$$0\neq \sum^n_{i=1}\sum^q_{k=1}t^{\bb}x^{\bs}d_ip_k\otimes (\delta_{j,i}v_k-E_{i,j}v_k)\in \tilde{T}.$$
Applying $d_l(1\leq l\leq n)$ to (\ref{belong to V}), we have
$$\sum^n_{i=1}\sum^q_{k=1}d_lt^{\ba+\bb}x^{\bm+\bs}d_ip_k\otimes (\delta_{j,i}v_k-E_{i,j}v_k)\in \tilde{T}, \forall\ \ba\in \Z^n, \bm\in \Z^n_{\geq0}.$$
Then
$$\sum^n_{i=1}\sum^q_{k=1}ut^{\bb}x^{\bs}d_ip_k\otimes (\delta_{j,i}v_k-E_{i,j}v_k)\in \tilde{T}, \forall\  u\in D_n.$$
By the Jacobson density theorem,
\(P\otimes w\subseteq\tilde T\) for some
\(0\neq w\in V(\delta_r,r)\). Set
\[
V'=\{v'\in V(\delta_r,r)\mid P\otimes v'\subseteq\tilde T\}.
\]
Then \(V'\neq0\), and the argument used in Claim~2 of
Theorem~\ref{simplicity} shows that \(V'\) is a
\(\gl_n\)-submodule. Thus \(V'=V(\delta_r,r)\), and consequently
\(\tilde T=T(P,V(\delta_r,r))\).
\end{proof}

\begin{corollary}\label{ker}
Let $P$ be a simple $D_n$-module, then $\tilde{L}_n(P,r)=\ker \pi_r$ for all $r=0,1,\dots, n-1$.
\end{corollary}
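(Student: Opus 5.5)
We must show $\tilde L_n(P,r)=\ker\pi_r$ for $0\le r\le n-1$, and we prove the two inclusions separately.

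\textbf{Step 1: $\tilde L_n(P,r)\subseteq\ker\pi_r$.} By Lemma~\ref{homomorphismpi}, $\pi_r\pi_{r-1}=0$, so $L_n(P,r)=\operatorname{Im}\pi_{r-1}\subseteq\ker\pi_r$; for $r=0$ this holds trivially since $L_n(P,0)=0$. Now let $u\in\tilde L_n(P,r)$. Then $\mg_n u\subseteq L_n(P,r)\subseteq\ker\pi_r$, and since $\pi_r$ is a homomorphism, $\mg_n\pi_r(u)=\pi_r(\mg_n u)=0$. Hence $\pi_r(u)$ spans a trivial $\mg_n$-submodule of $T(P,V(\delta_{r+1},r+1))$, and it suffices to show that $T(P,V(\delta_{s},s))$ has no nonzero $\mg_n$-invariant vectors for $s\ge1$.

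For the invariance argument, suppose $w=\sum_k p_k\otimes v_k$, with $v_k$ a basis of $V(\delta_s,s)$, is killed by all of $\mg_n$. Applying $d_i$ gives $\sum_k d_ip_k\otimes v_k=0$, so $d_ip_k=0$ for all $i,k$. Using $x_i$ instead of an arbitrary element, the computation in the proof of Corollary~\ref{reducible}(1) gives
$$0=d_i(x_ip_k)=x_id_ip_k+p_k=p_k.$$
Strictly speaking, that computation used $d_i(x_ip)=x_id_ip+p$ inside $D_n$. Since $D_n$ acts on $P$, the relation $d_ip_k=0$ for every $i$ only forces $d_i(x_ip_k)=p_k$; this is not yet a contradiction. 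So we instead use $x_jd_i\in\mg_n$:
$$0=(x_jd_i)w=\sum_k x_jd_ip_k\otimes v_k+\sum_k p_k\otimes E_{j,i}v_k=\sum_k p_k\otimes E_{j,i}v_k.$$
Writing $w$ with the $p_k$ linearly independent, this yields $E_{j,i}v_k=0$ for all $i,j$. But $V(\delta_s,s)$ with $s\ge1$ is a simple nontrivial $\gl_n$-module (the identity acts by $s\neq0$), so $v_k=0$ and therefore $w=0$. Consequently $\pi_r(u)=0$.

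\textbf{Step 2: $\ker\pi_r\subseteq\tilde L_n(P,r)$.} The module $\ker\pi_r$ is a submodule containing $\tilde L_n(P,r)$. By Lemma~\ref{irreducible}, the quotient $T(P,V(\delta_r,r))/\tilde L_n(P,r)$ is simple or zero. Hence $\ker\pi_r$ equals either $\tilde L_n(P,r)$ or the whole of $T(P,V(\delta_r,r))$. The latter would mean $\pi_r=0$, which contradicts Corollary~\ref{reducible}(1) for $0\le r\le n-1$. Therefore $\ker\pi_r=\tilde L_n(P,r)$.

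The main point requiring care is the invariant-vector argument in Step 1. The cleanest route is to use the elements $x_jd_i$ together with the linear independence of the $p_k$, as above.
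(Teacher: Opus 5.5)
Your proof is correct and follows the paper's structure. You show that $\pi_r(u)$ is a $\mg_n$-invariant vector of $T(P,V(\delta_{r+1},r+1))$ that must vanish, and then conclude with Lemma~\ref{irreducible} and Corollary~\ref{reducible}(1). The only difference is how the invariant vector is killed. The paper first uses $d_jw=0$ to deduce $P\cong A_n$ and that $w=p_1\otimes v'$ is a pure tensor, and only then applies $\sum_i x_id_i$. You instead observe that $x_jd_i$ acts on $w$ as $1\otimes E_{j,i}$, because its $D_n$-part is $(x_j\otimes 1)(d_iw)=0$ whichever representation of $w$ is used. This makes the paper's detour through $P\cong A_n$ unnecessary.
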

\begin{proof}
Let \(v\in\tilde L_n(P,r)\), and suppose that \(\pi_r(v)\neq0\).
Write
\[
\pi_r(v)=\sum_{k=1}^q p_k\otimes v_k
\in T(P,V(\delta_{r+1},r+1)),
\]
where the \(p_k\) and the \(v_k\) are linearly independent,
respectively. For \(g\in\mg_n\), the definition of
\(\tilde L_n(P,r)\) and Lemma~\ref{homomorphismpi} give $
g\pi_r(v)=\pi_r(gv)=0$.
Taking \(g=d_j\), we obtain \(d_jp_k=0\) for all \(j,k\).
The map \(A_n\to P\), \(f\mapsto fp_1\), is then a nonzero
\(D_n\)-module homomorphism. Since the natural \(D_n\)-module \(A_n\)
is simple, it follows that \(P\cong A_n\). The joint kernel of
\(d_1,\ldots,d_n\) on \(A_n\) is \(\C\); hence the \(p_k\) are
linearly dependent. Thus \(q=1\), and
\(\pi_r(v)=p_1\otimes v'\) for some
\(v'\in V(\delta_{r+1},r+1)\). Consequently,
$$\aligned \
\pi_r(v)=&\frac{1}{r+1}(p_1\otimes Iv')\\
=&\frac{1}{r+1}(\sum^n_{i=1}(x_id_ip_1)\otimes v'+p_1\otimes Iv')\\
=&\frac{1}{r+1}\sum^n_{i=1}x_id_i(p_1\otimes v')
=\frac{1}{r+1}\sum^n_{i=1}x_id_i\pi_r(v)=
0
\endaligned$$
i.e., $\tilde{L}_n(P,r)\subseteq \ker \pi_r$.
By Corollary \ref{reducible}, $\text{Im}\pi_r\neq0$. Then from Lemma \ref{irreducible}, we have $\tilde{L}_n(P,r)=\ker \pi_r$ for all $r=0,1,\dots, n-1$.
\end{proof}

\begin{theorem}\label{remain}
Let \(P\) be a simple \(D_n\)-module. Then:
\begin{enumerate}
\item \(T(P,V(\delta_r,r))/\tilde{L}_n(P,r)\) is simple for
\(0\leq r\leq n-1\).

\item \(L_n(P,r)\) is simple for \(1\leq r\leq n\); moreover,
$$T(P,V(\delta_{r-1},r-1))/\tilde{L}_n(P,r-1)\cong L_n(P,r).$$

\item \(T(P,V(\delta_0,0))=P\) is simple if and only
 if $P\ncong A_n$. Moreover $A_n/\C t^{\bo}x^{\bo}$ is simple.

\item \(T(P,V(\delta_n,n))\) is simple if and only if
\(\sum_{i=1}^n d_iP=P\). If it is reducible, then
\(T(P,V(\delta_n,n))/L_n(P,n)\) is trivial.
\end{enumerate}
\end{theorem}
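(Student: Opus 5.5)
Parts (1) and (2) come essentially for free from the results already established, and the real work is in the boundary cases (3) and (4).

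\emph{Parts (1) and (2).} For \(0\le r\le n-1\), Corollary~\ref{ker} gives \(\tilde L_n(P,r)=\ker\pi_r\). By Corollary~\ref{reducible}(1) the map \(\pi_r\) is nonzero, so \(\ker\pi_r\neq T(P,V(\delta_r,r))\). Hence the quotient \(T(P,V(\delta_r,r))/\tilde L_n(P,r)\) is nonzero, and Lemma~\ref{irreducible} makes it simple. This proves (1). For (2), the first isomorphism theorem applied to \(\pi_{r-1}\) gives
\[
T(P,V(\delta_{r-1},r-1))/\ker\pi_{r-1}\cong \operatorname{Im}\pi_{r-1}=L_n(P,r),
\]
and \(\ker\pi_{r-1}=\tilde L_n(P,r-1)\) by Corollary~\ref{ker}. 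So \(L_n(P,r)\) is isomorphic to the simple module in (1) with index \(r-1\), hence simple.

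\emph{Part (3).} Here \(V(\delta_0,0)\) is trivial, so \(T(P,V(\delta_0,0))=P\) with \(\mg_n\) acting through \(t^{\bb}x^{\bs}d_j\mapsto t^{\bb}x^{\bs}d_j\in D_n\). Since \(\tilde L_n(P,0)=\ker\pi_0\) and \(L_n(P,0)=0\), the module \(P\) is simple exactly when \(\ker\pi_0=0\); the quotient is nonzero by (1). Now \(\ker\pi_0=\{p\mid d_ip=0\ \forall i\}\). If this is nonzero, pick \(p\neq0\) in it. The map \(A_n\to P\), \(f\mapsto fp\), is then a nonzero \(D_n\)-homomorphism, so \(P\cong A_n\) (the argument of Corollary~\ref{ker}). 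Conversely, for \(P=A_n\) the constants lie in \(\ker\pi_0\); indeed the joint kernel of the \(d_i\) on \(A_n\) is \(\C\), so \(\ker\pi_0=\C t^{\bo}x^{\bo}\). Part (1) then gives that \(A_n/\C t^{\bo}x^{\bo}\) is simple.

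\emph{Part (4).} This is the main obstacle, since there is no \(\pi_n\) and Lemma~\ref{irreducible} only controls the quotient by \(\tilde L_n(P,n)\). The plan is as follows.
\begin{enumerate}
\item Identify \(V(\delta_n,n)=\C e_1\wedge\cdots\wedge e_n\). By \eqref{L_n}, \(L_n(P,n)=(\sum_i d_iP)\otimes e_1\wedge\cdots\wedge e_n\), since \(\pi_{n-1}(p\otimes \hat e_l)=\pm d_lp\otimes e_1\wedge\cdots\wedge e_n\).
\item Show that \(\mg_n\) acts trivially on \(T/L_n(P,n)\). On \(T(P,V(\delta_n,n))\cong P\), the action is \(t^{\bb}x^{\bs}d_j\cdot p=t^{\bb}x^{\bs}d_jp+d_j(t^{\bb}x^{\bs})p\). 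In \(D_n\) this expression equals \(d_j t^{\bb}x^{\bs}p\), which lies in \(\sum_i d_iP\). Hence \(\tilde L_n(P,n)=T\).
\item If \(\sum_i d_iP=P\), then \(L_n(P,n)=T\). This module is simple by (2) with \(r=n\), so \(T\) is simple.
\item If \(\sum_i d_iP\neq P\), then \(L_n(P,n)\) is a proper submodule. It is nonzero by Corollary~\ref{reducible}(1), so \(T\) is reducible, and step 2 shows the quotient \(T/L_n(P,n)\) is trivial.
\end{enumerate}
This argument also avoids any appeal to Lemma~\ref{irreducible} for \(r=n\).
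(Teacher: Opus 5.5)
Your proof is correct. Parts (1), (2) and (4) follow the paper essentially verbatim. For (4) this means: \(L_n(P,n)=(\sum_i d_iP)\otimes e_1\wedge\cdots\wedge e_n\), the element \(t^{\bb}x^{\bs}d_j\) acts on \(T(P,V(\delta_n,n))\cong P\) as \(d_jt^{\bb}x^{\bs}\), and \(L_n(P,n)\) is simple by (2).

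The real difference is in (3), in proving that \(P\) is simple when \(P\ncong A_n\).

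\emph{The paper's route.} It uses a direct generation argument. It asserts that some fixed \(d_i\) acts injectively on \(P\). It then obtains \(U(\mg_n)p\supseteq \C[d_1,\dots,d_n]A_nd_ip=D_nd_ip=P\) for every \(p\neq0\). The injectivity assertion is stated without proof. It is true, but it needs an argument, e.g.\ showing \(P\cong\C[t_1^{\pm1},x_1]\otimes\ker d_1\) whenever \(\ker d_1\neq0\), and then inducting on \(n\).

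\emph{Your route.} You deduce (3) from part (1) at \(r=0\). Here \(\tilde L_n(P,0)=\ker\pi_0\) is the joint kernel of \(d_1,\dots,d_n\), and a nonzero joint kernel forces \(P\cong A_n\) via \(f\mapsto fp\).

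\emph{Comparison.} Your route only needs the joint kernel to vanish, which the \(f\mapsto fp\) argument gives at once. It is therefore shorter and sidesteps the unproved injectivity claim. The cost is that it rests on Lemma~\ref{irreducible} and Corollary~\ref{ker} at \(r=0\), whereas the paper's argument for this half is self-contained. The paper's computation in fact works verbatim if \(i\) is allowed to depend on \(p\), so it too really only needs the joint kernel to vanish. Both routes obtain the simplicity of \(A_n/\C t^{\bo}x^{\bo}\) from (1).
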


\begin{proof}
By Corollary \ref{ker}, $\ker\pi_r=\tilde{L}_n(P,r)$ for $r=0,1,\dots, n-1$. Since $\text{Im}\pi_r\neq0$, we have $T(P,V(\delta_r,r))/\tilde{L}_n(P,r)\neq0$. From Lemma \ref{irreducible}, (1) follows.

By the fundamental homomorphism theorem and (1), (2) follows.

If $P=A_n$, then $\C t^{\bo}x^{\bo}$ is a proper submodule
of $A_n$.
If $P\ncong A_n$, then there there is some $i$ such that \(d_i\) acts injectively on \(P\). For \(0\neq p\in P\),
we have \(d_i p\neq0\), and the simplicity of \(P\) as a
\(D_n\)-module gives \(P=D_n(d_i p)\). Then every \(p'\in P\) has the form
\[
p'=\sum_{j=1}^m\mu_j\nu_j(d_i p)
   =\left(\sum_{j=1}^m\mu_j(\nu_jd_i)\right)p,
\]
where \(\mu_j\in\C[d_1,\ldots,d_n]\) and \(\nu_j\in A_n\).
Since \(\nu_jd_i\in\mg_n\), the operator in parentheses belongs to
\(U(\mg_n)\). Thus \(U(\mg_n)p=P\), and hence $P$ is a simple $
\mg_n$-module. Note that $\tilde{L}_n(A_n,0)=\C t^{\bo}x^{\bo}$.
By (1) and (2), $A_n/\C t^{\bo}x^{\bo}$ is simple, proving part~\textup{(3)}.

 By (2), $L_n(P,n)$ is a simple submodule of $T(P,V(\delta_n,n))$. Hence $T(P,V(\delta_n,n))$ is simple if and only if $T(P,V(\delta_n,n))=L_n(P,n)$. We know that $L_n(P,n)=\sum^n_{i=1}d_iP\otimes V(\delta_n,n)$. Note that
 $E_{ii}v=v$ for any $v\in V(\delta_n,n)$. Then we can see that
$$\aligned
t^{\bb}x^{\bs}d_i(p\otimes v+L_n(P,n))
&=(t^{\bb}x^{\bs}d_ip)\otimes v
+(d_i(t^{\bb}x^{\bs}))p\otimes E_{ii}v+L_n(P,n))\\
&=d_i(t^{\bb}x^{\bs}p)\otimes v+L_n(P,n)=L_n(P,n).\endaligned$$
Therefore (4) follows.
\end{proof}

%
%
%

\section{Simple cuspidal $\mg_n$-modules}

In this section, we first we give a
tensor product decomposition of   $A_n\# U(\mg_n)$. Then we can obtain all
simple cuspidal $(A_n,\mg_n)$-modules. Finally using the technique of  $A$-cover,
we can show that any simple cuspidal $\mg_n$-module is isomorphic to
some simple quotient of  $T(A_n(\lambda), V)$.

 \subsection{Tensor product decomposition of   $A_n\# U(\mg_n)$}
Since $(A_n,\mg_n)$-modules are exactly modules over
 $A_n\# U(\mg_n)$,
we need to study the structure of
 $A_n\# U(\mg_n)$.

 We use $\cdot$ to denote the multiplication between $A_n$ and $U(\mg_n)$ in $A_n\# U(\mg_n)$. We can check that
 $A_n\cdot \mg_n$ is a Lie subalgebra of $(A_n\# U(\mg_n))_L$ and

\begin{equation}\label{bracket}
 \aligned \
 [f\cdot d, f'\cdot d']&=(f\cdot d)( f'\cdot d')-(f'\cdot d')( f\cdot d)\\
 &=fd(f')\cdot d'-f'd'(f)\cdot d+ff'\cdot [d,d'],
 \endaligned
\end{equation}
for any \(f,f'\in A_n\) and \(d,d'\in\mg_n\).

Let
\[\mm_{\b1,\bo}=\langle t_1-1,\dots,t_n-1,x_1,\dots,x_n\rangle\]
be the maximal ideal of \(A_n\) generated by
\[t_1-1,\dots,t_n-1,x_1,\dots,x_n.\]
Set $\Delta=\mathrm{span}\{d_1,\dots,d_n\}$.
By (\ref{bracket}), the subspace $$\mm_{\b1,\bo}\cdot \mg_n+A_n\cdot \mm_{\b1,\bo}\Delta$$
is a Lie subalgebra of   $A_n\cdot \mg_n$, and $\mm_{\b1,\bo}\cdot \mg_n$
is an ideal of
$\mm_{\b1,\bo}\cdot \mg_n+A_n\cdot \mm_{\b1,\bo}\Delta$. Moreover
\begin{equation}
\label{factor} (\mm_{\b1,\bo}\cdot \mg_n+A_n\cdot \mm_{\b1,\bo}\Delta)/\mm_{\b1,\bo}\cdot \mg_n\cong  \mm_{\b1,\bo}\Delta.
\end{equation}

In $A_n\# U(\mg_n)$,
we
define the following element:

\begin{equation}\label{eq:reconstruction}\aligned
 X_i(\ba,\bm)&=\sum_{\br=\bo}^{\bm} (-1)^{|\br|}\binom{\bm}{\br}t^{-\ba}x^{\br}\cdot t^{\ba}x^{\bm-\br}d_{i}
 -\delta_{\bm,\bo}d_i,
 \endaligned\end{equation}
for every $\ba\in \Z^n, \bm\in\Z_{\geq 0}^n$, $i=1,\dots,n$, where
 $\binom{\bm}{\br}=\binom{m_1}{r_1}\dots\binom{m_n}{r_n}$.
For example $ X_i(\ba,\bo)=t^{-\ba}\cdot t^{\ba}d_{i}-d_i$.
We can check that $X_i(\ba,\bm)\in \mm_{\b1,\bo}\cdot \mg_n+A_n\cdot \mm_{\b1,\bo}\Delta$.

 It turns out that these elements commute with
$t_i, x_i,d_i$.

 \begin{lemma}\label{D-L} For any $\ba\in \Z^n, \bm\in\Z_{\geq 0}^n$, $i,j=1,\dots,n$  we have
 $$[d_j,X_i(\ba,\bm)]=[t_j,X_i(\ba,\bm)]=[x_j,X_i(\ba,\bm)]=0.$$
\end{lemma}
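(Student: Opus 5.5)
The plan is a direct computation inside the smash product $A_n\# U(\mg_n)$. I will use only two facts there. First, $A_n$ is a commutative subalgebra. Second, for $d\in\mg_n$ and $f\in A_n$ we have $[d,f]=d(f)$, so $[d,f\cdot D]=d(f)\cdot D+f\cdot[d,D]$ and $[f',f\cdot D]=-f\,D(f')$ for $f,f'\in A_n$, $D\in\mg_n$. The term $\delta_{\bm,\bo}d_i$ is handled separately in each case, using $[d_j,d_i]=0$ and $[t_j,d_i]=-d_i(t_j)$, $[x_j,d_i]=-d_i(x_j)$.

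\textbf{Commutators with $t_j$ and $x_j$.} For $g\in\{t_j,x_j\}$, the rule above gives
\[
[g,X_i(\ba,\bm)]=-\Bigl(\sum_{\br=\bo}^{\bm}(-1)^{|\br|}\binom{\bm}{\br}t^{-\ba}x^{\br}\,t^{\ba}x^{\bm-\br}-\delta_{\bm,\bo}\Bigr)d_i(g).
\]
Inside the parentheses the monomials multiply to $x^{\bm}$. The coefficient sum is $\prod_k(1-1)^{m_k}=\delta_{\bm,\bo}$, so the bracket equals $\delta_{\bm,\bo}-\delta_{\bm,\bo}=0$.

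\textbf{Commutator with $d_j$.} Expanding gives
\[
[d_j,t^{-\ba}x^{\br}\cdot t^{\ba}x^{\bm-\br}d_i]=d_j(t^{-\ba}x^{\br})\cdot t^{\ba}x^{\bm-\br}d_i+t^{-\ba}x^{\br}\cdot[d_j,t^{\ba}x^{\bm-\br}d_i].
\]
Here $d_j(t^{-\ba}x^{\br})=-a_jt^{-\ba}x^{\br}+r_jt^{-\ba}x^{\br-e_j}$ and $[d_j,t^{\ba}x^{\bm-\br}d_i]=(a_jt^{\ba}x^{\bm-\br}+(m_j-r_j)t^{\ba}x^{\bm-\br-e_j})d_i$. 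The two $a_j$-terms cancel identically. What remains is
\[
\sum_{\br}(-1)^{|\br|}\binom{\bm}{\br}\Bigl[r_j\,t^{-\ba}x^{\br-e_j}\cdot t^{\ba}x^{\bm-\br}d_i+(m_j-r_j)\,t^{-\ba}x^{\br}\cdot t^{\ba}x^{\bm-\br-e_j}d_i\Bigr].
\]
I then apply the binomial identities $r_j\binom{\bm}{\br}=m_j\binom{\bm-e_j}{\br-e_j}$ and $(m_j-r_j)\binom{\bm}{\br}=m_j\binom{\bm-e_j}{\br}$, and reindex the first sum by $\br'=\br-e_j$. The sign $(-1)^{|\br|}$ becomes $-(-1)^{|\br'|}$, so the first sum is exactly the negative of the second and the total vanishes. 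The $\delta_{\bm,\bo}d_i$ term commutes with $d_j$.

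\textbf{Main obstacle.} The only delicate point is bookkeeping in the smash product. The element $t^{-\ba}x^{\br}\cdot t^{\ba}x^{\bm-\br}d_i$ is not equal to $x^{\bm}d_i$: the product $\cdot$ is the smash-product multiplication, not multiplication inside $\mg_n$. So I must keep the $A_n$-factor and the $\mg_n$-factor separate, and only combine monomials on the $A_n$ side, as in the first case. A second point of care is that boundary terms with $r_j=0$ or $r_j=m_j$ drop out automatically, because of the factors $r_j$ and $m_j-r_j$.
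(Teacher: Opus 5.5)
Your proposal is correct and takes essentially the same route as the paper. Both compute directly in $A_n\# U(\mg_n)$. For $d_j$, the $a_j$-terms cancel and the remainder is killed by a binomial identity together with the shift $\br\mapsto\br-e_j$. For $t_j,x_j$, the $A_n$-factors collapse to $x^{\bm}$ and $\sum_{\br}(-1)^{|\br|}\binom{\bm}{\br}=\delta_{\bm,\bo}$ finishes the argument. Your two identities $r_j\binom{\bm}{\br}=m_j\binom{\bm-e_j}{\br-e_j}$ and $(m_j-r_j)\binom{\bm}{\br}=m_j\binom{\bm-e_j}{\br}$ are simply a repackaging of the paper's $\binom{m_j}{r_j}r_j=\binom{m_j}{r_j-1}(m_j-r_j+1)$.
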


\begin{proof}
We can compute that $$\aligned\
 [d_j, X_i(\ba,\bm)]
&= \sum_{\br=\bo}^{\bm} (-1)^{|\br|}\binom{\bm}{\br}
(-a_jt^{-\ba}x^{\br}+r_jt^{-\ba}x^{\br-e_j})\cdot t^{\ba}x^{\bm-\br}d_{i}\\
& \ \ +\sum_{\br=\bo}^{\bm} (-1)^{|\br|}\binom{\bm}{\br}t^{-\ba}x^{\br}\cdot (a_jt^{\ba}x^{\bm-\br}+(m_j-r_j)t^{\ba}x^{\bm-\br-e_j})d_{i}\\
&= \sum_{\br=\bo}^{\bm} (-1)^{|\br|}\binom{\bm}{\br}
r_jt^{-\ba}x^{\br-e_j}\cdot t^{\ba}x^{\bm-\br}d_{i}\\
& \ \ +\sum_{\br\geq e_j}^{\bm} -(-1)^{|\br|}\binom{\bm}{\br-e_j}(m_j-r_j+1)t^{-\ba}x^{\br-e_j}\cdot t^{\ba}x^{\bm-\br})d_{i}.
\endaligned$$
From $\binom{m_j}{r_j}
r_j=\binom{m_j}{r_j-1}(m_j-r_j+1)$, it follows that
 $[d_j, X_i(\ba,\bm)]=0$.

Then it can be obtained that  $$\aligned\
 [X_i(\ba,\bm),t_j]
&= \delta_{i,j}\sum_{\br=\bo}^{\bm} (-1)^{|\br|}\binom{\bm}{\br}t^{-\ba}x^{\br} t^{\ba+e_j}x^{\bm-\br}
 -\delta_{i,j}\delta_{\bm,\bo}t_j\\
&= \delta_{i,j}\sum_{\br=\bo}^{\bm} (-1)^{|\br|}\binom{\bm}{\br}t_jx^{\bm}
 -\delta_{i,j}\delta_{\bm,\bo}t_j\\
&=0,
 \endaligned$$
and  $$\aligned\
 [X_i(\ba,\bm),x_j]
&= \delta_{ij}\sum_{\br=\bo}^{\bm} (-1)^{|\br|}\binom{\bm}{\br}t^{-\ba}x^{\br} t^{\ba}x^{\bm-\br}
 -\delta_{i,j}\delta_{\bm,\bo}\\
&= \delta_{ij}\sum_{\br=\bo}^{\bm} (-1)^{|\br|}\binom{\bm}{\br} x^{\bm}
 -\delta_{i,j}\delta_{\bm,\bo}\\
&=0.
 \endaligned$$
\end{proof}

\begin{lemma}\label{dx}For every $\ba\in \Z^n, \bm\in\Z_{\geq 0}^n$, we have

\begin{equation}\label{X}\aligned
 t^{\ba}x^{\bm}d_{i}=\sum_{\bs=\bo}^{\bm} \binom{\bm}{\bs}t^{\ba}x^{\bs}X_i(\ba,\bm-\bs)
 +t^{\ba}x^{\bm}\cdot d_{i},
 \endaligned\end{equation}
where $i=1,\dots,n$.
\end{lemma}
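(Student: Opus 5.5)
This is an identity in the smash product $A_n\#U(\mg_n)$: the elements $t^{\ba}x^{\bs}$ on the right act by left multiplication through $\cdot$. The plan is to substitute the definition \eqref{eq:reconstruction} of $X_i(\ba,\bm-\bs)$ into the right-hand side and collapse the resulting double sum with a binomial inversion. No commutation relations should be needed, only associativity of the $A_n$-multiplication, i.e.\ $f\cdot(g\cdot D)=(fg)\cdot D$ for $f,g\in A_n$, $D\in\mg_n$.

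\textbf{Step 1: expand.} For each $\bs$ with $\bo\le\bs\le\bm$, write
\[
t^{\ba}x^{\bs}X_i(\ba,\bm-\bs)=\sum_{\br=\bo}^{\bm-\bs}(-1)^{|\br|}\binom{\bm-\bs}{\br}\,x^{\bs+\br}\cdot t^{\ba}x^{\bm-\bs-\br}d_i\;-\;\delta_{\bs,\bm}\,t^{\ba}x^{\bm}\cdot d_i ,
\]
using $t^{\ba}x^{\bs}t^{-\ba}x^{\br}=x^{\bs+\br}$ in $A_n$. The correction term with $\delta_{\bs,\bm}$ appears only when $\bs=\bm$, and it equals $-t^{\ba}x^{\bm}\cdot d_i$. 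This exactly cancels the extra summand $t^{\ba}x^{\bm}\cdot d_i$ in \eqref{X}.

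\textbf{Step 2: regroup.} It remains to show
\[
\sum_{\bs\le\bm}\sum_{\br\le\bm-\bs}\binom{\bm}{\bs}\binom{\bm-\bs}{\br}(-1)^{|\br|}\,x^{\bs+\br}\cdot t^{\ba}x^{\bm-\bs-\br}d_i=t^{\ba}x^{\bm}d_i,
\]
where the right side is understood as $1\cdot t^{\ba}x^{\bm}d_i$. Set $\mathbf{k}=\bs+\br$ and use $\binom{\bm}{\bs}\binom{\bm-\bs}{\br}=\binom{\bm}{\mathbf{k}}\binom{\mathbf{k}}{\bs}$. The coefficient of $x^{\mathbf{k}}\cdot t^{\ba}x^{\bm-\mathbf{k}}d_i$ then becomes
\[
\binom{\bm}{\mathbf{k}}\sum_{\bs\le\mathbf{k}}\binom{\mathbf{k}}{\bs}(-1)^{|\mathbf{k}-\bs|}=\binom{\bm}{\mathbf{k}}\prod_{j}(1-1)^{k_j}=\delta_{\mathbf{k},\bo}.
\]
So only $\mathbf{k}=\bo$ survives, and it contributes $1\cdot t^{\ba}x^{\bm}d_i=t^{\ba}x^{\bm}d_i$, which is the desired identity.

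\textbf{Main obstacle.} The only delicate point is bookkeeping. One must keep $x^{\mathbf{k}}\cdot(\,t^{\ba}x^{\bm-\mathbf{k}}d_i)$ as a product in the smash product and not as the single element $t^{\ba}x^{\bm}d_i$ of $\mg_n$. These two are different elements of $A_n\#U(\mg_n)$, which is precisely why the $\delta_{\mathbf{k},\bo}$ collapse is needed. One must also handle the $\delta_{\bm,\bo}$ term of $X_i$ correctly, including the degenerate case $\bm=\bo$, where \eqref{X} reduces to $t^{\ba}d_i=t^{\ba}X_i(\ba,\bo)+t^{\ba}\cdot d_i$. That case follows at once from $X_i(\ba,\bo)=t^{-\ba}\cdot t^{\ba}d_i-d_i$.
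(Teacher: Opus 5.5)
Your proposal is correct and follows essentially the same route as the paper. Both substitute the definition of $X_i(\ba,\bm-\bs)$, use the $\delta_{\bs,\bm}$ term to cancel $t^{\ba}x^{\bm}\cdot d_i$, regroup with $\binom{\bm}{\bs}\binom{\bm-\bs}{\br}=\binom{\bm}{\bs+\br}\binom{\bs+\br}{\br}$, and kill every $\bs+\br>\bo$ term with the alternating binomial sum, leaving only $1\cdot t^{\ba}x^{\bm}d_i$.
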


\begin{proof}
From $$\binom{\bm}{\bs}
\binom{\bm-\bs}{\br}
=\binom{\bm}{\bs+\br}
\binom{\bs+\br}{\br}$$
and $$\sum_{\br=\bo}^{\bu} (-1)^{|\br|}\binom{\bu}{\br}=0, \bu>\bo,$$
it follows that
$$\aligned & \sum_{\bs=\bo}^{\bm} \binom{\bm}{\bs}t^{\ba}x^{\bs}X_i(\ba,\bm-\bs)\\
&= \sum_{\bs=\bo}^{\bm} \binom{\bm}{\bs}
\sum_{\br=\bo}^{\bm-\bs} (-1)^{|\br|}\binom{\bm-\bs}{\br}x^{\bs+\br}\cdot t^{\ba}x^{\bm-\bs-\br}d_{i}- t^{\ba}x^{\bm}\cdot d_{i}
\\
&= \sum_{\bs=\bo}^{\bm} \sum_{\br=\bo}^{\bm-\bs}\binom{\bm}{\bs+\br}
 (-1)^{|\br|}\binom{\bs+\br}{\br}x^{\bs+\br}\cdot t^{\ba}x^{\bm-\bs-\br}d_{i}
- t^{\ba}x^{\bm}\cdot d_{i}\\
&= t^{\ba}x^{\bm}d_{i}+\sum_{\bu>\bo}^{\bm} \binom{\bm}{\bu}
\sum_{\br=\bo}^{\bu} (-1)^{|\br|}\binom{\bu}{\br}x^{\bu}\cdot t^{\ba}x^{\bm-\bu}d_{i}- t^{\ba}x^{\bm}\cdot d_{i}
\\
&= t^{\ba}x^{\bm}d_{i}- t^{\ba}x^{\bm}\cdot d_{i}.
\endaligned$$
\end{proof}

Let
\[
T_n=\operatorname{span}\{X_i(\ba,\bm)\mid
\ba\in\Z^n,\ \bm\in\Z_{\geq0}^n,\ 1\leq i\leq n\}
\subseteq A_n\cdot\mg_n.
\]

\begin{lemma}\label{t-algebra}
\begin{enumerate}
\item $T_n=\{v\in A_n\cdot \mg_n\mid [v,A_n]=[v,\Delta]=0\}$. Thus
$T_n$ is a Lie subalgebra of $A_n\cdot \mg_n$.
\item The linear map $\gamma: T_n\rightarrow \mm_{\b1,\bo}\Delta,
X_i(\ba,\bm)\mapsto t^{\ba}x^{\bm}d_{i} -\delta_{\bm,\bo}d_i$ is a Lie algebra isomorphism.
\end{enumerate}
\end{lemma}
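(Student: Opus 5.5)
The plan is to prove (1) by writing every element of \(A_n\cdot\mg_n\) in a normal form adapted to the \(X_i(\ba,\bm)\). For (2), we realize \(\gamma\) as the restriction of the quotient homomorphism in (\ref{factor}). Throughout I use that \(A_n\# U(\mg_n)\cong A_n\otimes U(\mg_n)\) as vector spaces. Hence \(A_n\cdot\mg_n\cong A_n\otimes\mg_n\) is a free left \(A_n\)-module with basis \(\{1\cdot t^{\ba}x^{\bm}d_i\}\).

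\emph{Step 1 (\(A_n\)-independence).} Note \(X_i(\bo,\bo)=0\). I claim the elements \(X_i(\ba,\bm)\) with \((\ba,\bm)\neq(\bo,\bo)\) are left \(A_n\)-linearly independent in \(A_n\cdot\mg_n\). Expand \(X_i(\ba,\bm)\) in the free basis \(1\cdot t^{\bb}x^{\bs}d_j\). The term \(t^{\ba}x^{\bm}d_i\) occurs with the unit coefficient \(t^{-\ba}\). Every other basis element that occurs is either \(t^{\ba}x^{\bm-\br}d_i\) with \(\br>\bo\) (strictly smaller \(x\)-degree, same \(\ba\)), or \(d_i\) itself (only when \(\bm=\bo\), and then \(\ba\neq\bo\)). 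Order the pairs \((\ba,\bm)\) by \(x\)-degree within each fixed \(\ba\), and treat \(\ba=\bo\) separately. The expansion matrix is then unitriangular up to units, which gives independence. In particular these \(X_i(\ba,\bm)\) form a basis of \(T_n\).

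\emph{Step 2 (normal form and centralizer).} Lemma \ref{dx} lets me write \(f\cdot t^{\ba}x^{\bm}d_i\) as an \(A_n\)-combination of \(X\)'s plus \(ft^{\ba}x^{\bm}\cdot d_i\). So every \(v\in A_n\cdot\mg_n\) has the form
\[
v=\sum_k f_kY_k+\sum_{i=1}^n g_i\cdot d_i,
\]
where the \(Y_k\) are distinct basis elements of \(T_n\) and \(f_k,g_i\in A_n\).

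Now suppose \([v,A_n]=0\). By Lemma \ref{D-L}, each \(Y_k\) commutes with \(A_n\), so \([v,x_j]=g_j\); hence \(g_j=0\) for every \(j\). Next suppose also \([v,\Delta]=0\). Again by Lemma \ref{D-L}, \([d_j,v]=\sum_k d_j(f_k)Y_k\). Step 1 then forces \(d_j(f_k)=0\) for all \(j,k\). The joint kernel of \(d_1,\dots,d_n\) on \(A_n\) is \(\C\), so each \(f_k\) is a constant and \(v\in T_n\).

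The reverse inclusion is exactly Lemma \ref{D-L}. A simultaneous centralizer of \(A_n\) and \(\Delta\) is closed under the commutator bracket, so \(T_n\) is a Lie subalgebra.

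\emph{Step 3 (\(\gamma\)).} Let \(L=\mm_{\b1,\bo}\cdot\mg_n+A_n\cdot\mm_{\b1,\bo}\Delta\), and let \(\phi\colon L\to\mm_{\b1,\bo}\Delta\) be the Lie homomorphism coming from (\ref{factor}). Modulo \(\mm_{\b1,\bo}\cdot\mg_n\), a coefficient \(f\) may be replaced by its value \(f(\b1,\bo)\).

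Apply this to \(X_i(\ba,\bm)\in L\). Every summand with \(\br\neq\bo\) has coefficient \(t^{-\ba}x^{\br}\in\mm_{\b1,\bo}\), so it dies. The summand with \(\br=\bo\) has coefficient \(t^{-\ba}\), which evaluates to \(1\), so it contributes \(t^{\ba}x^{\bm}d_i\). The correction \(-\delta_{\bm,\bo}d_i\) survives as it is. Altogether \(\phi(X_i(\ba,\bm))=t^{\ba}x^{\bm}d_i-\delta_{\bm,\bo}d_i\). Thus \(\gamma=\phi|_{T_n}\), and \(\gamma\) is a Lie homomorphism.

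For bijectivity, \(\mm_{\b1,\bo}\) has basis \(\{t^{\ba}-1\mid \ba\neq\bo\}\cup\{t^{\ba}x^{\bm}\mid \bm\neq\bo\}\). Hence the images \(\gamma(X_i(\ba,\bm))\), \((\ba,\bm)\neq(\bo,\bo)\), form a basis of \(\mm_{\b1,\bo}\Delta\). By Step 1 the \(X_i(\ba,\bm)\) form a basis of \(T_n\), so \(\gamma\) is bijective.

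The main obstacle is Step 1. It needs care that the smash product really is free over \(A_n\) on a PBW-type basis, and that the triangularity argument handles the degenerate \(\bm=\bo\) terms, where \(d_i\) appears in several \(X_i(\ba,\bo)\). For the latter, I would note that \(d_i\) is never the leading term of any \(X_i(\ba,\bo)\) with \(\ba\neq\bo\), so the triangular structure is preserved.
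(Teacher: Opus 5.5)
Your proposal is correct and follows essentially the same route as the paper: you write elements in normal form via Lemma~\ref{dx}, kill the $A_n\cdot\Delta$-part by bracketing with $A_n$, force constant coefficients by bracketing with $\Delta$, and obtain $\gamma$ by restricting the quotient map of (\ref{factor}). Your Step~1 (left $A_n$-linear independence of the $X_i(\ba,\bm)$ with $(\ba,\bm)\neq(\bo,\bo)$) and your basis comparison for the bijectivity of $\gamma$ make explicit two points that the paper's proof uses without justification.
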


\begin{proof}
(1) Set $T'_n:=\{v\in A_n\cdot \mg_n\mid [v,A_n]=[v,\Delta]=0\}$. By
Lemma \ref{D-L}, we have $T_n\subset T'_n$. By Lemma \ref{dx}, $A_n\cdot \mg_n=A_n T_n\oplus A_n\cdot \Delta$.
So  for any $v\in T'_n$, we can write $v=\sum_{i,\ba,\bm} f_{i,\ba,\bm}X_i(\ba,\bm)+v'$, where $f_{i,\ba,\bm}\in A_n$,
$v'\in A_n\cdot \Delta$. For every \(a\in A_n\), one has
\([v,a]=[v',a]=0\). Since an element of \(A_n\cdot\Delta\) that
annihilates \(A_n\) is zero, it follows that \(v'=0\).
Then $[d_i,v]=[d_i,\sum_{i,\ba,\bm} f_{i,\ba,\bm}X_i(\ba,\bm)]=0$ implies that $f_{i,\ba,\bm}\in \C$, i.e., $v\in T_n$.
Thus $T_n=T'_n=\{v\in A_n\cdot \mg_n\mid [v,A_n]=[v,\Delta]=0\}$.

(2) As vector spaces, $T_n\cong\mm_{\b1,\bo}\Delta$. Since $X_i(\ba,\bm)\in \mm_{\b1,\bo}\cdot \mg_n+A_n\cdot \mm_{\b1,\bo}\Delta$ for any $\ba\in \Z^n, \bm\in\Z_{\geq 0}^n$, by (1), $T_n$ is a Lie subalgebra of $\mm_{\b1,\bo}\cdot\mg_n+A_n\cdot \mm_{\b1,\bo}\Delta$. We therefore have the Lie algebra homomorphism
$$T_n\subset\mm_{\b1,\bo}\cdot\mg_n+A_n\cdot \mm_{\b1,\bo}\Delta\rightarrow (\mm_{\b1,\bo}\cdot\mg_n+A_n\cdot \mm_{\b1,\bo}\Delta)/\mm_{\b1,\bo}\cdot\mg_n\cong\mm_{\b1,\bo}\Delta$$
that maps $X_i(\ba,\bm)$ to $t^{\ba}x^{\bm}d_{i} -\delta_{\bm,\bo}d_i$.
\end{proof}

Using the Lie algebra $T_n$, we  can give a tensor product decomposition
of $A_n\#U(\mg_n)$.

\begin{theorem}\label{mainth}
We have that $A_n\#U(\mg_n)=D_n U(T_n)\cong D_n\otimes U(T_n)$.
More precisely there is an
algebra isomorphism $\psi: D_n \otimes U(T_n)\rightarrow A_n\#U(\mg_n)$
such that $\psi |_{D_n}=\textup{Id}_{D_n}$ and $\psi |_{T_n}=\textup{Id}_{T_n}$,
whose inverse \(\phi:A_n\#U(\mg_n)\rightarrow D_n\otimes U(T_n)\)
is given by
$$\aligned\phi(t^{\ba}x^{\bm}d_{i})&=
\sum_{\br= \bo}^{\bm}\binom{\bm}{\br} t^{\ba}x^{\bm-\br} \otimes X_i(\ba,\br)+ (t^{\ba}x^{\bm}\cdot d_{i})\otimes 1, \\
\phi(d_i)&= d_i\otimes 1,\ \
\phi(t^{\ba}x^{\bm})=t^{\ba}x^{\bm}\otimes 1,\endaligned$$  where $\ba\in \Z^n, \bm\in\Z_{\geq 0}^n$, $i=1,\dots,n$.
\end{theorem}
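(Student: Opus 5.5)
The plan is to build the multiplication map $\psi: D_n\otimes U(T_n)\to A_n\#U(\mg_n)$, $u\otimes w\mapsto uw$, show it is an algebra homomorphism, and then show it is both surjective and injective.

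\textbf{Step 1: $\psi$ is an algebra homomorphism.} By Lemma~\ref{D-L}, every $X_i(\ba,\bm)$ commutes with $t_j^{\pm1}$, $x_j$ and $d_j$. These elements generate $D_n$, so $T_n$ commutes with $D_n$ inside $A_n\#U(\mg_n)$. By Lemma~\ref{t-algebra}(1), $T_n$ is a Lie subalgebra. Hence the inclusion $T_n\hookrightarrow A_n\#U(\mg_n)$ extends to an algebra map $U(T_n)\to A_n\#U(\mg_n)$ whose image commutes with $D_n$. Together with the inclusion of $D_n$, this gives the algebra homomorphism $\psi$, and it satisfies $\psi|_{D_n}=\mathrm{Id}$ and $\psi|_{T_n}=\mathrm{Id}$.

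\textbf{Step 2: surjectivity.} The algebra $A_n\#U(\mg_n)$ is generated by $A_n$ and $\mg_n$. By Lemma~\ref{dx}, each generator $t^{\ba}x^{\bm}d_i$ of $\mg_n$ equals
\[
\sum_{\bs}\binom{\bm}{\bs}t^{\ba}x^{\bs}X_i(\ba,\bm-\bs)+t^{\ba}x^{\bm}\cdot d_i ,
\]
which lies in $D_nT_n+D_n$. So the image of $\psi$ contains all generators, and $\psi$ is surjective. The same formula, after reindexing $\br=\bm-\bs$, gives the stated expression for $\phi(t^{\ba}x^{\bm}d_i)$.

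\textbf{Step 3: injectivity.} This is the main obstacle, and I would handle it with a PBW-type basis comparison. As a vector space, $A_n\#U(\mg_n)\cong A_n\otimes U(\mg_n)$, and it carries the filtration by $U(\mg_n)$-degree. Its associated graded is $A_n\otimes S(\mg_n)$. Since $\mg_n$ is a free $A_n$-module with basis $d_1,\dots,d_n$, this equals $A_n\otimes S(A_n\otimes\Delta)$.

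On the other side, filter $D_n\otimes U(T_n)$ by total degree, counting each $d_i$ and each $X_i(\ba,\bm)$ as degree one. The map $\psi$ respects these filtrations, because each $X_i(\ba,\bm)$ has $U(\mg_n)$-degree one. In the associated graded, $D_n$ becomes $A_n\otimes S(\Delta)$.

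The symbol of $X_i(\ba,\bm)$ is
\[
\sum_{\br}(-1)^{|\br|}\binom{\bm}{\br}t^{-\ba}x^{\br}\otimes t^{\ba}x^{\bm-\br}d_i \;-\;\delta_{\bm,\bo}\,d_i .
\]
By Lemma~\ref{dx}, the $A_n$-span of these symbols together with $d_1,\dots,d_n$ is all of $A_n\otimes\mg_n$, and this decomposition is $A_n$-free: it is the decomposition $A_n\cdot\mg_n=A_nT_n\oplus A_n\cdot\Delta$ from the proof of Lemma~\ref{t-algebra}. Freeness of the $A_n$-span of the symbols of $T_n$ reduces to the $A_n$-linear independence of the $X_i(\ba,\bm)$ modulo $A_n\cdot\Delta$. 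This follows from the triangularity in Lemma~\ref{dx}, since $t^{\ba}x^{\bm}d_i\mapsto X_i(\ba,\bm)$ is unitriangular over $A_n$.

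Therefore $\mathrm{gr}\,\psi$ is the isomorphism
\[
A_n\otimes S(\Delta)\otimes S(T_n)\cong A_n\otimes S(A_n\otimes\Delta)
\]
induced by an $A_n$-basis change of the free module $A_n\otimes\mg_n$. So $\mathrm{gr}\,\psi$ is bijective, and hence $\psi$ is bijective. As an alternative, I would show directly that $\phi$ defined on generators respects the defining relations of the smash product and then check $\phi\psi=\mathrm{Id}$ on generators; this is routine but computational.

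Finally, the inverse formulas $\phi(d_i)=d_i\otimes 1$ and $\phi(t^{\ba}x^{\bm})=t^{\ba}x^{\bm}\otimes1$ are immediate. The formula for $\phi(t^{\ba}x^{\bm}d_i)$ is read off from Step 2.
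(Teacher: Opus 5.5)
Your proof is correct. Steps 1 and 2 coincide with the paper's: the multiplication map, commutation via Lemma~\ref{D-L}, and surjectivity via Lemma~\ref{dx}. Your injectivity argument, however, takes a genuinely different route.

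\textbf{How the paper proves injectivity.} It defines $\phi$ on the generators $A_n\cup\mg_n$ by the displayed formulas. It asserts that these assignments respect the smash-product relations; for the bracket of $\mg_n$ this would require computing brackets $[X_i(\ba,\br),X_j(\bb,\bs)]$ in $T_n$, and this is not written out. It then checks $\psi\phi=\mathrm{Id}$ and $\phi\psi=\mathrm{Id}$ on generators.

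\textbf{How your route works.} You pass to associated graded algebras. Both sides become symmetric algebras over $A_n$ of free $A_n$-modules, and $\mathrm{gr}\,\psi$ is induced by the degree-one map $A_n\otimes(\Delta\oplus T_n)\to A_n\cdot\mg_n$. Bijectivity of $\psi$ therefore reduces to a linear-algebra fact: $\{d_i\}\cup\{X_i(\ba,\bm)\}$ is an $A_n$-basis of $A_n\cdot\mg_n$.

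\textbf{What your route buys.}
\begin{enumerate}
\item It avoids all relation checking.
\item $\phi$ is automatically an algebra homomorphism, being $\psi^{-1}$, and its formula is read off from Lemma~\ref{dx}.
\item It actually proves the directness of $A_n\cdot\mg_n=A_nT_n\oplus A_n\cdot\Delta$ and the $A_n$-linear independence of the $X_i(\ba,\bm)$. The paper uses both tacitly in the proof of Lemma~\ref{t-algebra}(1), in the step ``$[d_i,\sum f X]=0$ implies $f\in\C$'', even though Lemma~\ref{dx} alone only gives spanning.
\end{enumerate}

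\textbf{Small corrections to Step 3.}
\begin{enumerate}
\item The change of basis is triangular with diagonal entries the units $t^{-\ba}$ (or $t^{\ba}$ in the direction of Lemma~\ref{dx}), not $1$. This is still enough, since they are invertible in $A_n$.
\item $X_i(\bo,\bo)=0$. So the $\C$-basis of $T_n$, which you need in order to identify $\mathrm{gr}\,U(T_n)=S(T_n)$ with the right generators, is $\{X_i(\ba,\bm)\mid(\ba,\bm)\neq(\bo,\bo)\}$. The element $d_i$ occupies the slot $(\bo,\bo)$.
\item You should state that, by PBW, $D_n$ is the subspace $A_n\otimes\C[d_1,\dots,d_n]$ of $A_n\otimes U(\mg_n)$. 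This is what justifies $\mathrm{gr}\,D_n=A_n\otimes S(\Delta)$, with the filtration on $D_n$ agreeing with the restricted one.
\end{enumerate}
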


\begin{proof}
Clearly \(D_nU(T_n)\subseteq A_n\#U(\mg_n)\), and the reverse
inclusion follows from Lemma~\ref{dx}. Hence
\[
A_n\#U(\mg_n)=D_nU(T_n).
\]
By Lemma~\ref{D-L}, \([D_n,T_n]=0\), so multiplication induces a
surjective algebra homomorphism
\[
\psi\colon D_n\otimes U(T_n)\longrightarrow A_n\#U(\mg_n).
\]

Define \(\phi\) on the generators \(A_n\cup\mg_n\) by the formulas in
the statement. Lemmas~\ref{D-L} and~\ref{t-algebra}, together with
the reconstruction formula \eqref{X}, show directly that these
assignments preserve the defining smash-product relations. Thus
\(\phi\) extends to an algebra homomorphism. Formula~\eqref{X} gives
\[
\psi\phi=\operatorname{Id}_{A_n\#U(\mg_n)}.
\]
Conversely, the definition of \(X_i(\ba,\bm)\) gives
\[
\phi\psi=\operatorname{Id}_{D_n\otimes U(T_n)}
\]
on the generating subalgebras \(D_n\otimes1\) and \(1\otimes T_n\).
Therefore \(\phi\) and \(\psi\) are mutually inverse.
\end{proof}

By Theorem \ref{mainth} and (2) in Lemma \ref{t-algebra}, we have the following isomorphism.

\begin{theorem}\label{mainth1}
There is an isomorphism
$$\phi_1:A_n\#U(\mg_n)\rightarrow D_n\otimes U(\mm_{\b1,\bo}\Delta)
$$
given by
$$\aligned\phi_1(t^{\ba}x^{\bm}d_{i})&=
\sum_{\br= \bo}^{\bm}\binom{\bm}{\br} t^{\ba}x^{\bm-\br} \otimes (t^{\ba}x^{\br}d_{i} -\delta_{\br,\bo}d_i)+ (t^{\ba}x^{\bm}\cdot d_{i})\otimes 1, \\
\phi_1(d_i)&= d_i\otimes 1,\ \
\phi_1(t^{\ba}x^{\bm})=t^{\ba}x^{\bm}\otimes 1,\endaligned$$  where $\ba\in \Z^n, \bm\in\Z_{\geq 0}^n$, $i=1,\dots,n$.
\end{theorem}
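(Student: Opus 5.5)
The plan is to compose the isomorphism $\phi$ of Theorem~\ref{mainth} with $\mathrm{Id}_{D_n}\otimes U(\gamma)$, where $\gamma\colon T_n\to\mm_{\b1,\bo}\Delta$ is the Lie algebra isomorphism of Lemma~\ref{t-algebra}(2). That is, we set
\[
\phi_1=(\mathrm{Id}_{D_n}\otimes U(\gamma))\circ\phi .
\]
Since $\gamma$ is an isomorphism of Lie algebras, it extends uniquely to an algebra isomorphism $U(\gamma)\colon U(T_n)\to U(\mm_{\b1,\bo}\Delta)$ by the universal property of enveloping algebras, with inverse $U(\gamma^{-1})$. Consequently $\mathrm{Id}_{D_n}\otimes U(\gamma)$ is an algebra isomorphism $D_n\otimes U(T_n)\to D_n\otimes U(\mm_{\b1,\bo}\Delta)$, and $\phi_1$, being a composite of algebra isomorphisms, is itself an algebra isomorphism.

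It then remains to verify the formulas on generators. On $d_i$ and $t^{\ba}x^{\bm}$ this is immediate, because $\phi$ sends them into $D_n\otimes 1$, where the second factor is unaffected. For $t^{\ba}x^{\bm}d_i$, we apply $\gamma$ to each $X_i(\ba,\br)$ in the formula of Theorem~\ref{mainth}; by Lemma~\ref{t-algebra}(2) it becomes $t^{\ba}x^{\br}d_i-\delta_{\br,\bo}d_i$, which is exactly the stated expression.

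The only point needing care is Lemma~\ref{t-algebra}(2) itself, namely that $\gamma$ is bijective and respects brackets. Here I would make explicit the argument the paper only sketches. Surjectivity onto $\mm_{\b1,\bo}\Delta$ holds because the elements $t^{\ba}x^{\br}d_i-\delta_{\br,\bo}d_i$ span $\mm_{\b1,\bo}\Delta$: these are the $(f-f(\b1,\bo))d_i$ for monomials $f$. Injectivity holds because the restriction to $T_n$ of the quotient map by the ideal $\mm_{\b1,\bo}\cdot\mg_n$ is injective. Indeed, by Lemma~\ref{dx}, $A_n\cdot\mg_n=A_nT_n\oplus A_n\cdot\Delta$ with $T_n$ free over $A_n$ on the $X_i(\ba,\bm)$, so $T_n\cap\mm_{\b1,\bo}\cdot\mg_n=0$ after evaluation at the point $(\b1,\bo)$. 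Given this, the composite is the claimed isomorphism.
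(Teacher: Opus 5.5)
Your proposal is correct and is the paper's own argument: $\phi_1=(\mathrm{Id}_{D_n}\otimes U(\gamma))\circ\phi$, with $\phi$ from Theorem~\ref{mainth} and $\gamma$ from Lemma~\ref{t-algebra}(2), and the stated formulas come from applying $\gamma$ to each $X_i(\ba,\br)$. One small remark on your injectivity step: Lemma~\ref{dx} only gives spanning, not $A_n$-freeness, but you do not need freeness, because $\gamma$ is the restriction of the well-defined quotient map and the images $t^{\ba}x^{\bm}d_i-\delta_{\bm,\bo}d_i$ with $(\ba,\bm)\neq(\bo,\bo)$ are linearly independent (note that $X_i(\bo,\bo)=0$ must be excluded).
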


\subsection{Simple cuspidal $(A_n,\mg_n)$-modules}
In this subsection, we will describe all simple cuspidal $(A_n,\mg_n)$-modules
using Theorem \ref{mainth1}. First we study finite-dimensional simple $\mm_{\b1,\bo}\Delta$-modules.

\begin{lemma}\label{Hu}
Let \(V\) be finite-dimensional, and let
\(L\subseteq\gl(V)\) be a nonzero Lie algebra acting irreducibly on
\(V\). Then
\[
L=[L,L]\oplus Z(L),
\]
where \([L,L]\) is semisimple and \(\dim Z(L)\leq1\).
\end{lemma}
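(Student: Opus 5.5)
This is the classical reductive-Lie-algebra criterion (Humphreys, Section 19.1), and I would prove it via the radical. Let $R=\operatorname{rad}(L)$ be the solvable radical of $L$. Since $L\subseteq\gl(V)$ acts irreducibly, $V$ is a simple $L$-module, so I work over $\C$ (algebraically closed, characteristic zero) with the simple module $V$.

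\emph{Step 1: $R$ acts by scalars.} By Lie's theorem, $R$ has a common eigenvector in $V$, so there is a linear form $\lambda\in R^*$ with $W=\{v\in V\mid rv=\lambda(r)v\ \forall r\in R\}\neq0$. The invariance lemma (the standard one used in the proof of Lie's theorem, valid because $R$ is an ideal of $L$) shows that $W$ is $L$-stable. By irreducibility, $W=V$. Hence every $r\in R$ acts on $V$ as the scalar $\lambda(r)$. Since $L\subseteq\gl(V)$ acts faithfully, $R\subseteq\C\,\mathrm{Id}_V$, so $\dim R\leq1$ and $R\subseteq Z(L)$. Conversely, $Z(L)$ is an abelian ideal, so $Z(L)\subseteq R$. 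Therefore $R=Z(L)$ and $\dim Z(L)\leq1$.

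\emph{Step 2: the decomposition.} By the Levi decomposition, $L=S\ltimes R$ with $S$ semisimple. Because $R=Z(L)$ is central, the semidirect product is direct: $L=S\oplus Z(L)$ as Lie algebras. Then
\[
[L,L]=[S,S]=S,
\]
which is semisimple, and so $L=[L,L]\oplus Z(L)$.

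Alternatively, to avoid Levi, I would argue as follows. The scalars in $Z(L)$ have trace $\operatorname{tr}(c\,\mathrm{Id})=c\dim V$, which is nonzero when $c\neq0$, while every element of $[L,L]$ has trace $0$. Hence $[L,L]\cap Z(L)=0$. Next, $L/Z(L)=L/R$ is semisimple, so $L=[L,L]+Z(L)$. Finally, $[L,L]\cong L/Z(L)$ is semisimple.

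\emph{Main obstacle.} The only nontrivial input is that the eigenspace $W$ is $L$-stable (the invariance lemma), together with faithfulness to conclude $R\subseteq\C\,\mathrm{Id}_V$. Both are standard facts, so I expect no real difficulty; if needed, I would simply cite Humphreys for Step 1.
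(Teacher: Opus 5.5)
Your proof is correct and follows the standard argument behind the paper's proof. The paper gives no argument of its own and only cites Humphreys, Proposition~19.1. Your Step~1 is the core of that argument: Lie's theorem, the invariance lemma and irreducibility force $\operatorname{Rad}L$ to act by scalars, so $\operatorname{Rad}L=Z(L)\subseteq\C\,\mathrm{Id}_V$.

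The only difference is how you split off the center. The standard reference uses complete reducibility of $L$ under $\operatorname{ad}L\cong L/Z(L)$, i.e.\ Weyl's theorem. Your alternative argument avoids both Weyl and Levi and is sound: $[L,L]\subseteq\sl(V)$ meets $\C\,\mathrm{Id}_V$ trivially, and $L/\operatorname{Rad}L$ is perfect.
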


\begin{proof}
This is \cite[Proposition~19.1]{Hu}.
\end{proof}

\begin{lemma}\label{lem:finite-jet}
If \(V\) is a finite-dimensional \(\mathfrak m_{\b1,\bo}\Delta\)-module, then
there exists \(N\geq 2\) such that
\[
\mm_{\b1,\bo}^N\Delta V=0.
\]
\end{lemma}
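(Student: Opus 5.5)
The plan is to show that the kernel $K$ of the representation $\rho\colon \mm_{\b1,\bo}\Delta\to\gl(V)$ contains $\mm_{\b1,\bo}^N\Delta$ for some $N$. Since $V$ is finite-dimensional, $K$ is an ideal of $\mm_{\b1,\bo}\Delta$ of finite codimension. So it suffices to prove that every finite-codimensional ideal $I$ of $\mm_{\b1,\bo}\Delta$ contains some $\mm_{\b1,\bo}^N\Delta$. If we get some $N$, we may enlarge it to ensure $N\geq2$.

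\textbf{Step 1: bracket relations.} Write $\mm=\mm_{\b1,\bo}$. The bracket on $A_n\Delta$ is
\[
[fd_i,gd_j]=fd_i(g)d_j-gd_j(f)d_i ,
\]
and each $d_i$ is a derivation of $A_n$ with $d_i(\mm^k)\subseteq\mm^{k-1}$. Hence $[\mm^k\Delta,\mm^l\Delta]\subseteq\mm^{k+l-1}\Delta$.

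\textbf{Step 2: the coefficient space $J$.} Define
\[
J=\{f\in\mm\mid f\Delta\subseteq I\}.
\]
Since $\mm\Delta/I$ is finite-dimensional and $f\mapsto fd_j$ is injective, each space $\{f\in\mm : fd_j\in I\}$ has finite codimension in $\mm$. Therefore $J$ has finite codimension in $\mm$.

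\textbf{Step 3: $J$ is closed under multiplication by $\mm^2$.} For $f\in J$ and $g\in\mm$, bracketing $gd_i$ with $fd_j\in I$ gives
\[
gd_i(f)d_j-fd_j(g)d_i\in I .
\]
Set $g=u_au_b$ with $u_a,u_b$ among the generators $t_k-1,x_k$ of $\mm$. Compare these brackets for the pairs $(g,u_a)$, $(g,u_b)$, and use the Leibniz rule $d_i(u_au_b)=u_ad_i(u_b)+u_bd_i(u_a)$. The expected outcome is that $m\cdot f\cdot d_j\in I$ for every $m\in\mm^2$ and $f\in J$, that is, $\mm^2J\subseteq J$.

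A second, more careful handling of the term $gd_i(f)d_j$ should also yield $\mm^2 d_i(J)\subseteq J$ after adjusting by elements of $J$.

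\textbf{Step 4: an ideal of $A_n$ inside $J$.} Since $A_n$ is a finitely generated module over the subalgebra $\C+\mm^2$, the finite-codimensional $(\C+\mm^2)$-submodule $J$ contains a finite-codimensional ideal $J_0=\mm^2J$ of $A_n$. The zero set of $J_0$ is then a finite set of points $p\in(\C^\times)^n\times\C^n$.

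\textbf{Step 5: the zero set of $J_0$ is only the point $(\b1,\bo)$.} This is the step I expect to be the main obstacle.

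Suppose $p\neq(\b1,\bo)$ is a zero of $J_0$. The vector fields $d_1,\dots,d_n$ are nowhere vanishing, and $\mm$ contains functions that are invertible near $p$. Localize: $J_0$ has a primary component $Q_p$ at $p$, and the images of $\mm\Delta$ in $(A_n/\mm_p^k)\Delta$ form the full jet algebra of the $\Delta$-directions at $p$. In that jet algebra the elements $fd_i$ with $f(p)\neq0$ generate everything by brackets. Using the bracket-closure from Step 3 (namely $\mm^2d_i(J)\subseteq J$), this should force $Q_p=A_n$, a contradiction. Concretely, for $h\in J_0$ take $f\in\mm$ with $f(p)=1$; then $f\,d_i(h)$ agrees with $d_i(h)$ near $p$. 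This shows $Q_p$ is stable under $d_i$ near $p$. But a proper primary ideal at $p$ cannot be stable under a derivation that does not vanish at $p$, since lowering degree of the leading term eventually produces a unit.

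Hence $J_0\supseteq\mm^{N_0}$ for some $N_0$. Therefore $\mm^{N_0}\Delta\subseteq J_0\Delta\subseteq I$, and we take $N=\max(N_0,2)$.

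The routine parts are Steps 1 and 4. The delicate parts are Step 3, which requires closing $J$ under $d_i$ modulo $\mm^2$ via brackets, and the local argument at the foreign points in Step 5.
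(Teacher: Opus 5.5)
The reduction to finite-codimensional ideals is fine, and so are Steps~1, 2 and~4, granting Step~3. The gap is Step~3 itself, which carries all the content of the lemma and is only asserted (``the expected outcome is''). It is also not clear it can be done as sketched. $I$ is not an $A_n$-submodule, so you cannot multiply the relations $[gd_i,fd_j]=g\,d_i(f)\,d_j-f\,d_j(g)\,d_i\in I$ by functions. Comparing these relations for $g=u_au_b$, $u_a$, $u_b$ never separates the two terms.

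Worse, your second claim $\mm^2d_i(J)\subseteq J$, on which Step~5 rests, is false. Here is a counterexample.
\begin{enumerate}
\item Take $n=1$ and $\varphi\colon A_1\to\C[[v]]$, $\varphi(f)=f(e^v,v)$. This ring map satisfies $\varphi\circ d=\frac{d}{dv}\circ\varphi$ and $\varphi(\mm)\subseteq v\C[[v]]$. Hence $fd\mapsto\varphi(f)\frac{d}{dv}$ is a Lie homomorphism $\mm d\to v\C[[v]]\frac{d}{dv}$.
\item The subspace $\C v^2\frac{d}{dv}+v^4\C[[v]]\frac{d}{dv}$ is an ideal of the target. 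The quotient is spanned by the classes of $v\frac{d}{dv}$ and $v^3\frac{d}{dv}$, whose bracket is twice the latter.
\item Sending these classes to $\mathrm{diag}(1,-1)$ and $E_{12}$ gives a two-dimensional module with representation $\rho$.
\item For $I=\ker\rho$ you get $x^2\in J$. But $x^2\,d(x^2)=2x^3$ and $\rho(2x^3d)=2E_{12}\neq0$, so $\mm^2d(J)\not\subseteq J$.
\end{enumerate}
In this example $\mm^2J\subseteq J$ does hold. If you could prove that inclusion in general, you could take $g\in\mm^3$ in the bracket above. Then $f\,d_j(g)\in\mm^2J$, which gives $\mm^3d_i(J)\subseteq J$. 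That is enough for your localization argument, using $f\in\mm^3$ with $f(p)\neq0$. But $\mm^2J\subseteq J$ is exactly the step that is missing.

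The paper gets the needed finiteness directly from a grading element. Since $[x_id_i,x_i^rd_i]=(r-1)x_i^rd_i$, a nonzero $\rho(x_i^rd_i)$ would be an eigenvector of $\operatorname{ad}\rho(x_id_i)$ on the finite-dimensional space $\gl(V)$ with eigenvalue $r-1$. Hence $x_i^rd_i\in\ker\rho$ for all $r\geq N$. Explicit brackets with $t_i^bx_id_i$, $(t_i^b-1)d_i$, $y_ix_id_i$ and $y_id_i$ (where $y_i=t_i-1$) then give $t_i^by_i^qx_i^rd_i\in\ker\rho$ whenever $q+r\geq N$. Further brackets with elements $x_i\,t^{\mathbf b'}y^{\mathbf q'}x^{\mathbf r'}d_l$ and $x_jd_i$ handle monomials in several variables. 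This yields $\mm^{N'}\Delta\subseteq\ker\rho$ with no analysis of coefficient spaces or their zero sets.
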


\begin{proof}
For convenience,
let $\mm=\mm_{\b1,\bo}$, $L=\mathfrak m\Delta$,  $\rho: L\rightarrow\mathfrak{gl}(V)$ be the associated Lie homomorphism, and
$K=\ker\rho$.

\

\noindent \textbf{Claim 1:}
There exists \(N\in \Z_{\geq2}\) such that
\begin{equation}\label{eq1} x_i^{r}d_i V=0,\quad \forall\ 1\leq i\leq n, \forall\ r\in \Z_{\geq N}. \end{equation}

 For a fixed $1\leq i\leq n$, let $W_i=\text{span}\{L_r=x_i^rd_i\mid r\geq1\}$ which is a subalgebra of $L$. Then
 $$[x_id_i,x_i^{r}d_i]=(r-1)x_i^{r}d_i.$$
 Applying $\rho$, we obtain
 $$[\rho(x_id_i),\rho(x_i^{r}d_i)]=(r-1)\rho(x_i^{r}d_i).$$
 Thus if $\rho(x_i^{r}d_i)\neq 0$, then
 then \(r-1\) is an eigenvalue of $ \operatorname{ad}\rho(x_id_i) \colon \mathfrak{gl}(V)\longrightarrow\mathfrak{gl}(V)$. Since \(\mathfrak{gl}(V)\) is finite-dimensional, \(\operatorname{ad}\rho(x_id_i)\) has only finitely many eigenvalues. Consequently, there exits $N_i\in \Z_{\geq 2}$ such
 that  $x_i^{r}d_i V=0$, for all $r\in \Z_{\geq N_i}$. Let $N=\text{max}\{N_1,\dots,N_n\}$. Then Claim 1 holds.

\

\noindent \textbf{Claim 2:}
For any $b\in \Z$, we have
\begin{equation}\label{eq1} t_i^{b}x_i^{r}d_i V=0,\quad \forall\ 1\leq i\leq n,   \forall\ r\in \Z_{\geq N}. \end{equation}
where the integer \(N\in \Z_{\geq2 }\) is defined in Claim 1.

 For any $b\in \Z$ and $r\geq N$, by Claim 1 and that $\ker\rho$ is an ideal of $L$, we have
\begin{equation}\label{D_7}
\aligned \
[x_i^{r}d_i,t_i^bx_id_i]=bt_i^bx_i^{r+1}d_i+(1-r)t_i^bx_i^rd_i\in \ker\rho
\endaligned
\end{equation}
and
\begin{equation}\label{D_8}
\aligned \
[x_i^{r+1}d_i,(t_i^b-1)d_i]=&bt_i^bx_i^{r+1}d_i-(r+1)(t_i^b-1)x_i^rd_i\\
=&bt_i^bx_i^{r+1}d_i-(r+1)t_i^bx_i^rd_i+(r+1)x_i^rd_i\in \ker\rho.
\endaligned
\end{equation}
 Subtracting (\ref{D_7}) from (\ref{D_8}), by $x_i^rd_i\in \ker\rho$, we can see that Claim 2 holds.

\

\noindent \textbf{Claim 3:} For any $b\in \Z$, we have
\begin{equation}\label{eq1} t_i^{b}y_i^qx_i^{r}d_i V=0,\quad \  \forall\ q, r\in \Z_{\geq 0}\
\text{such that}\ q+r\geq N, \end{equation}
where $y_i=t_i-1$, where the integer \(N\in \Z_{\geq2 }\) is defined in Claim 1.

For convenience, denote $F_{b,q,r}=t_i^by_i^qx_i^rd_i$. Then we have
\begin{equation}\label{F_1}
\aligned \
&[F_{b,q,r},y_ix_id_i]-[F_{b,q,r+1},y_id_i]\\
=&t_i^{b+1}y_i^qx_i^{r+1}d_i+t_i^{b}y_i^{q+1}x_i^{r}d_i\\
&-bt_i^{b}y_i^{q+1}x_i^{r+1}d_i-qt_i^{b+1}y_i^{q}x_i^{r+1}d_i
-rt_i^{b}y_i^{q+1}x_i^{r}d_i\\
&-t_i^{b+1}y_i^{q}x_i^{r+1}d_i+bt_i^{b}y_i^{q+1}x_i^{r+1}d_i\\
&+qt_i^{b+1}y_i^{q}x_i^{r+1}d_i+(r+1)t_i^{b}y_i^{q+1}x_i^{r}d_i\\
=&2F_{b,q+1,r}
\endaligned
\end{equation}
and
\begin{equation}\label{F_2}
\aligned \
[F_{b,q,r},y_id_i]=&t_i^{b+1}y_i^{q}x_i^{r}d_i-y_i(bt_i^{b}y_i^{q}x_i^{r}d_i+qt_i^{b+1}y_i^{q-1}x_i^{r}d_i+rt_i^{b}y_i^{q}x_i^{r-1}d_i)\\
=&(1-q)F_{b+1,q,r}-bF_{b,q+1,r}-rF_{b,q+1,r-1}.
\endaligned
\end{equation}
Since $F_{b,0,r}\in \ker\rho$ for any $r\geq N$, by (\ref{F_1}) and induction on $q$,  we have
$$\aligned \
F_{b,q,r}\in \ker\rho, \ \forall\ q\in \Z_{\geq 0}, \ r\in \Z_{\geq N}, b\in \Z.
\endaligned$$
Then  from (\ref{F_2}), by descending induction on $r$, one can obtain
\begin{equation}\label{F_3}
t_i^by_i^qx_i^rd_i\in \ker\rho, \ \forall\  q+r\geq N.
\end{equation}
Hence Claim 3 holds.

\

\noindent \textbf{Claim 4:}
There exists $N'\geq nN+1$ such that
\begin{equation} t^{\mathbf{b}}y^{\mathbf{q}}x^{\mathbf{r}}d_i V=0,\quad \forall\ |\mathbf{q}|+|\mathbf{r}|\geq N', \forall\ i=1,\dots,n, \end{equation}
where $y^{\mathbf{q}}=y_1^{q_1}\dots y_n^{q_n}$, $y_i=t_i-1$.

If $n=1$, then it is case (\ref{F_3}). Next, we assume $n\geq 2$. 

 For $\mathbf{q},\mathbf{r}\in \Z_{\geq 0}^n$ such that  $|\mathbf{q}|+|\mathbf{r}|\geq N'$, there must exist some $i$ such that $q_i+r_i\geq N+1$.  By Claim 3, $t_i^{b_i}y_i^{q_i}x_i^{r_i}d_i\in \ker \rho$. Consequently  for $l\neq i$, we have
\begin{equation}\label{F_4}
[t_i^{b_i}y_i^{q_i}x_i^{r_i}d_i,x_i
t^{\mathbf{b}-b_ie_i}y^{\mathbf{q}-q_ie_i}x^{\mathbf{r}-r_ie_i}d_l]
=t^{\mathbf{b}}y^{\mathbf{q}}x^{\mathbf{r}}d_l\in \ker\rho.
\end{equation}
For $l=i$, since $n\geq2$, we take $j\neq i$, then by (\ref{F_4}) we know that $t^{\mathbf{b}}y^{\mathbf{q}}x^{\mathbf{r}}d_j\in \ker\rho$.
Consequently
\begin{equation}\label{4.15}[t^{\mathbf{b}}y^{\mathbf{q}}x^{\mathbf{r}}d_j, x_jd_i]=t^{\mathbf{b}}y^{\mathbf{q}}x^{\mathbf{r}}d_i-x_jd_i(t^{\mathbf{b}}y^{\mathbf{q}}x^{\mathbf{r}})d_j\in \ker\rho.\end{equation}
One can see that the sum of the degrees of $y_i$ and $x_i$ in $d_i(t^{\bb}y^{\mathbf{q}}x^{\br})$ is at least $N$, similar to the argument in (\ref{F_4}), we have
$$x_jd_i(t^{\mathbf{b}}y^{\mathbf{q}}x^{\mathbf{r}})d_j\in \ker\rho.$$
 Then by (\ref{4.15}), $t^{\mathbf{b}}y^{\mathbf{q}}x^{\mathbf{r}}d_i\in \ker\rho$.  Note that $\mm^{N'}\Delta$ is spanned by $t^{\mathbf{b}}y^{\mathbf{q}}x^{\mathbf{r}}d_l$ with $|\mathbf{q}|+|\mathbf{r}|\geq N'$. Therefore $\mm^{N'}\Delta \subseteq \ker\rho$.

Now we have established this lemma.
\end{proof}

\begin{lemma}\label{lem:gln-quotient1}
The following statements hold.

\begin{enumerate}
\item
Let $
\pi_1\colon
\mathfrak m_{1,0}\Delta
\longrightarrow
\mathfrak m_{1,0}\Delta/
\mathfrak m_{1,0}^{2}\Delta
$
be the canonical projection. For every
\(\ba\in\mathbb Z^n\), we have
\[
\pi_1(t^\ba d_i-d_i)
 =
 \sum_{k=1}^{n}a_k(t_k-1)d_i
 +\mathfrak m_{1,0}^{2}\Delta,
\]
\[
\pi_1(t^\ba x_kd_i)
 =
 x_kd_i+\mathfrak m_{1,0}^{2}\Delta,
\]
and
\[
\pi_1(t^\ba x^\br d_i)=0,
\qquad\text{whenever } |\br|\geq 2.
\]

\item
The linear map
\[
\pi_2\colon
\mathfrak m_{1,0}\Delta/
\mathfrak m_{1,0}^{2}\Delta
\rightarrow
\mathfrak{gl}_n
\]
defined by
\[
x_kd_i+\mathfrak m_{1,0}^{2}\Delta
\mapsto E_{ki},
\qquad
(t_k-1)d_i+\mathfrak m_{1,0}^{2}\Delta
\mapsto E_{ki},
\]
is a surjective Lie algebra homomorphism.
\end{enumerate}
\end{lemma}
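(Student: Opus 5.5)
For part (1), I will work modulo $\mm^2\Delta$ with $\mm=\mm_{\b1,\bo}$, using that $\mm^2$ is spanned by products of two generators among $y_k=t_k-1$ and $x_k$, and that $\mm^2 A_n\subseteq\mm^2$.

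\emph{The element $t^\ba d_i-d_i$.} Write $t^\ba-1$ as a polynomial in the $y_k$. Since each $t_k$ is a unit, this is a Laurent expression, but modulo $\mm^2$ it is determined by the first-order Taylor expansion at the point $t=\b1$, $x=\bo$. For $a\ge0$ we have $t_k^{a}=(1+y_k)^{a}\equiv 1+a y_k \pmod{\mm^2}$. For negative exponents, use $t_k^{-1}=1-y_kt_k^{-1}\equiv 1-y_k$, since $y_k(t_k^{-1}-1)=-y_k^2t_k^{-1}\in\mm^2$. Multiplying these congruences gives $t^\ba\equiv 1+\sum_k a_k y_k \pmod{\mm^2}$. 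Hence $t^\ba d_i-d_i\equiv\sum_k a_k(t_k-1)d_i$.

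\emph{The elements $t^\ba x_kd_i$ and $t^\ba x^\br d_i$ with $|\br|\ge2$.} Write $t^\ba x_k=x_k+(t^\ba-1)x_k$, and note $(t^\ba-1)x_k\in\mm^2$. If $|\br|\ge2$, then $x^\br\in\mm^2$ already, so $t^\ba x^\br d_i\in\mm^2\Delta$.

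For part (2), I first check that $\mm^2\Delta$ is an ideal of $\mm\Delta$. The bracket $[fd_i,gd_j]=fd_i(g)d_j-gd_j(f)d_i$ involves derivatives $d_i(g)$, and since $d_i(\mm^k)\subseteq\mm^{k-1}$ (by the Leibniz rule), each term lies in $\mm\cdot\mm^{2-1}\Delta$ or better. Thus the quotient is a Lie algebra with basis the classes of $y_kd_i$ and $x_kd_i$ ($2n^2$ elements), and $\pi_2$ is well defined and surjective.

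The main step is the bracket check. Modulo $\mm^2\Delta$, for $f,g\in\mm$ we have $[fd_i,gd_j]\equiv f\,\overline{d_i(g)}\,d_j-g\,\overline{d_j(f)}\,d_i$, where $\overline{h}$ denotes the value $h(\b1,\bo)$. Here $d_i(y_k)=\delta_{ik}t_k$, which has value $\delta_{ik}$, and $d_i(x_k)=\delta_{ik}$. So for $u_k\in\{y_k,x_k\}$ and $u'_l\in\{y_l,x_l\}$ we get
$$[u_kd_i,u'_ld_j]\equiv\delta_{il}u_kd_j-\delta_{jk}u'_ld_i.$$
This matches $[E_{ki},E_{lj}]=\delta_{il}E_{kj}-\delta_{jk}E_{li}$, provided we identify $y_k$ and $x_k$ with the same index. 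The map sending both $y_kd_i$ and $x_kd_i$ to $E_{ki}$ is the composite of this structure with the linear map $y_k\mapsto e_k$, $x_k\mapsto e_k$. Since the bracket coefficients depend only on indices, the map is a homomorphism.

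The only subtlety I expect is carefully verifying the ideal property, together with the fact that evaluating $d_i(g)$ at the point $(\b1,\bo)$ is legitimate modulo $\mm$.
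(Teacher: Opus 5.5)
Your proposal is correct and follows essentially the same route as the paper. Both arguments use the first-order congruence $t^{\ba}\equiv 1+\sum_k a_k(t_k-1) \pmod{\mm^2}$, the filtration property $d_i(\mm^q)\subseteq\mm^{q-1}$ to show that $\mm^2\Delta$ is an ideal of $\mm\Delta$, and a bracket check on the classes of $(t_k-1)d_i$ and $x_kd_i$; your evaluation at the point $(\mathbf{1},\mathbf{0})$ just packages the paper's three separate bracket computations into one. You also assert that these $2n^2$ classes form a basis, which holds because $\mm/\mm^2$ has basis the classes of $t_k-1$ and $x_k$. That is what makes $\pi_2$ well defined, a point the paper leaves implicit by only noting that the classes span.
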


\begin{proof}
For convenience, we still denote
\[
y_k=t_k-1,
\qquad
\mathfrak m=\mathfrak m_{1,0},
\qquad
L=\mathfrak m\Delta,
\qquad
J=\mathfrak m^2\Delta.
\]
For \(f,g\in A_n\), the bracket in \(A_n\Delta\) is given by
\begin{equation}\label{eq:basic-vector-field-bracket}
[fd_i,gd_j]
 =
 f\,d_i(g)d_j-g\,d_j(f)d_i.
\end{equation}
Since $
d_i(\mathfrak m^q)\subseteq \mathfrak m^{q-1}$,
we have
\begin{equation}\label{eq:filtration-bracket}
[\mathfrak m^p\Delta,\mathfrak m^q\Delta]
\subseteq
\mathfrak m^{p+q-1}\Delta
\end{equation}
for all \(p,q\geq 1\). In particular, \(J\) is an ideal of \(L\).

\smallskip

(1)
For any \(a_k\in\mathbb Z\), we have
\[
(1+y_k)^{a_k}\equiv 1+a_ky_k\pmod{y_k^2}.
\]
This congruence also holds when \(a_k<0\), since the formal expansion
of \((1+y_k)^{a_k}\), modulo \(y_k^2\), still has constant term \(1\)
and linear term \(a_ky_k\). Therefore,
\[
t^a
 =
 \prod_{k=1}^{n}(1+y_k)^{a_k}
 \equiv
 1+\sum_{k=1}^{n}a_ky_k
 \pmod{\mathfrak m^2}.
\]
It follows that $
t^\ba-1-\sum_{k=1}^{n}a_k(t_k-1)\in\mathfrak m^2$
and hence $$
\pi_1(t^\ba d_i-d_i)
 =
 \sum_{k=1}^{n}a_k(t_k-1)d_i+J.$$

Moreover, since \(t^\ba-1\in\mathfrak m\), we have
\(
t^\ba x_k-x_k=(t^\ba-1)x_k\in\mathfrak m^2
\).
Thus $
\pi_1(t^\ba x_kd_i)=x_kd_i+J$.

Finally, if \(|\br|\geq2\), then \(x^\br\in\mathfrak m^2\).
Consequently, $
t^\ba x^\br d_i\in\mathfrak m^2\Delta=J$,
which proves
$
\pi_1(t^\ba x^\br d_i)=0$.

\smallskip

(2)
Set
$
Y_{ki}=(t_k-1)d_i+J=y_kd_i+J$,
and $
X_{ki}=x_kd_i+J$.
Since $$
\mathfrak m/\mathfrak m^2
 =
 \operatorname{span}_{\mathbb C}
 \{y_1,\ldots,y_n,x_1,\ldots,x_n\},$$
the elements \(Y_{ki}\) and \(X_{ki}\) form a spanning set of \(L/J\).

We calculate their brackets. First,
\[
d_i(y_l)=d_i(t_l-1)=\delta_{il}t_l
=\delta_{il}(1+y_l).
\]
Using \eqref{eq:basic-vector-field-bracket}, we obtain
\begin{align*}
[y_kd_i,y_ld_j]
&=
y_kd_i(y_l)d_j-y_ld_j(y_k)d_i\\
&=
\delta_{il}y_k(1+y_l)d_j
-
\delta_{jk}y_l(1+y_k)d_i.
\end{align*}
Modulo \(J\), all terms of degree at least two vanish. Hence
\begin{equation}\label{eq:YY-bracket}
[Y_{ki},Y_{lj}]
 =
 \delta_{il}Y_{kj}-\delta_{jk}Y_{li}.
\end{equation}

Since \(d_i(x_l)=\delta_{il}\), we similarly have
\begin{equation}\label{eq:XX-bracket}
[X_{ki},X_{lj}]
 =
 \delta_{il}X_{kj}-\delta_{jk}X_{li}.
\end{equation}

For the mixed bracket,
\begin{align*}
[y_kd_i,x_ld_j]
&=
y_kd_i(x_l)d_j-x_ld_j(y_k)d_i\\
&=
\delta_{il}y_kd_j-\delta_{jk}x_l(1+y_k)d_i.
\end{align*}
Since \(x_ly_kd_i\in J\), it follows that
\begin{equation}\label{eq:YX-bracket}
[Y_{ki},X_{lj}]
 =
 \delta_{il}Y_{kj}-\delta_{jk}X_{li}.
\end{equation}

Equations \eqref{eq:YY-bracket}--\eqref{eq:YX-bracket} show that
\(\pi_2\) preserves all brackets. Therefore \(\pi_2\) is a Lie
algebra homomorphism. It is clearly surjective.
\end{proof}

The following key lemma classifies all finite-dimensional
simple \(\mathfrak m_{1,0}\Delta\)-modules.

\begin{lemma}\label{lem:gln-quotient}
If \(V\) is a finite-dimensional simple
\(\mathfrak m_{1,0}\Delta\)-module, then
\[
\mathfrak m_{1,0}^{2}\Delta V=0
\]
and
\[
(t_k-1-x_k)d_lV=0
\qquad
\text{for all }1\leq k,l\leq n.
\]
Consequently, the action of \(\mathfrak m_{1,0}\Delta\) on \(V\)
factors through \(\mathfrak{gl}_n\), and \(V\) is a simple
\(\mathfrak{gl}_n\)-module.
\end{lemma}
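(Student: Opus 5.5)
Write $\mm=\mm_{\b1,\bo}$, $L=\mm\Delta$, $\rho\colon L\to\gl(V)$, and $y_k=t_k-1$. The plan has four steps: show that the image $\rho(L)$ is reductive, show that it is semisimple plus centre, use a degree-shifting element to kill $\mm^2\Delta$, and finally identify the action with $\gl_n$ and kill $(y_k-x_k)d_l$.

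\textbf{Step 1: $\rho(L)$ is reductive.} By Lemma~\ref{lem:finite-jet} there is $N\geq2$ with $\mm^N\Delta\subseteq\ker\rho$. Hence $\rho$ factors through the finite-dimensional Lie algebra $\bar L=L/\mm^N\Delta$. By \eqref{eq:filtration-bracket}, $\bar L$ is filtered by the ideals $J_p=\mm^p\Delta/\mm^N\Delta$ for $p\geq 2$. The bracket raises the filtration degree, since $[J_p,J_q]\subseteq J_{p+q-1}$. So $J_2=\mm^2\Delta/\mm^N\Delta$ is a nilpotent ideal of $\bar L$.

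\textbf{Step 2: $\rho(L)=[\rho(L),\rho(L)]\oplus Z$ with $\dim Z\leq 1$.} Since $V$ is simple, Lemma~\ref{Hu} gives this decomposition with $[\rho(L),\rho(L)]$ semisimple. The image $\rho(J_2)$ is then a nilpotent ideal of a reductive Lie algebra, so $\rho(J_2)\subseteq Z$. In particular $\rho(J_2)$ is central, and it acts on $V$ by scalars by Schur's lemma.

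\textbf{Step 3: $\rho(\mm^2\Delta)=0$.} Use the Euler-type element $E=\sum_k x_kd_k\in L$. For $f$ homogeneous of total $x$-degree $r$, the bracket $[E,fd_i]$ equals $(r-1)fd_i$ plus terms involving $t$-derivatives. The $t$-derivative terms are $\sum_k x_k t_k\partial_{t_k}(f)d_i$, and these lie in higher $\mm$-degree. Since $\rho(J_2)$ is central in $\rho(L)$, we get $\rho([E,u])=0$ for every $u\in\mm^2\Delta$, so $\rho$ kills $[L,\mm^2\Delta]$. I will show that $[L,\mm^2\Delta]+\mm^N\Delta=\mm^2\Delta$, by descending induction on the $\mm$-degree $p$ from $N$ down to $2$. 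For $f\in\mm^p$ the following two brackets produce $fd_i$ up to higher-degree terms:
\begin{itemize}
\item $[x_jd_j, fd_i]$ with $j\neq i$, chosen according to which monomial in $x,y$ appears in $f$;
\item $[y_jd_j,fd_i]=y_jd_j(f)d_i-f\,t_jd_i$ with $j\neq i$, where the term $f\,t_jd_i=fd_i+fy_jd_i$ gives $fd_i$ up to higher degree.
\end{itemize}
For $n=1$ there is no index $j\neq i$, so I would instead use $[y_1d_1, fd_1]$ and $[x_1d_1,fd_1]$ and combine them to eliminate the unwanted terms; this mirrors the computation of $F_{b,q,r}$ in Claim~3 of Lemma~\ref{lem:finite-jet}. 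This step is the main obstacle: getting the descending induction right, especially the $n=1$ case and the coefficient $r-1$, which vanishes at degree $1$ but not at degree $\geq2$. If it fails, I would fall back on a trace argument. Every element of $\rho(\mm^2\Delta)$ is a scalar and also a commutator in $\rho(L)$, so its trace is $0$, which forces it to be $0$.

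\textbf{Step 4: $\rho((y_k-x_k)d_l)=0$, and the conclusion.} Now $\rho$ factors through $L/\mm^2\Delta$, whose spanning elements $Y_{ki},X_{ki}$ satisfy \eqref{eq:YY-bracket}--\eqref{eq:YX-bracket}. Put $Z_{ki}=Y_{ki}-X_{ki}$. Those relations give $[Z_{ki},Z_{lj}]=0$ and $[X_{ki},Z_{lj}]=\delta_{il}Z_{kj}-\delta_{jk}Z_{li}$. So $\mathrm{span}\{Z_{ki}\}$ is an abelian ideal isomorphic to the adjoint $\gl_n$-module. Its image is therefore a solvable ideal of the reductive algebra $\rho(L)$, hence central. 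But $[X,Z]$ spans the whole $\mathfrak{sl}_n$-part, so the $\mathfrak{sl}_n$-part of the span of the $Z_{ki}$ has image $0$. Only $\rho(\sum_k Z_{kk})$ remains, and it is a central scalar $c$. To show $c=0$, I would realize $\sum_k Z_{kk}$ as a bracket modulo $\mm^2\Delta$, using $[x_k d_k,(t_k-1)d_k]$ and similar elements, or use the trace argument together with an explicit commutator expression. Then $\rho$ factors through $\pi_2$ of Lemma~\ref{lem:gln-quotient1}(2). The $\gl_n$-submodules of $V$ are exactly its $L$-submodules, so $V$ is a simple $\gl_n$-module.
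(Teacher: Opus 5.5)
Your Steps 1, 2 and 4 follow the paper's route: finite jet, reductivity of $\rho(L)$, centrality of $\rho(\mm^2\Delta)$, then killing the abelian ideal spanned by the classes of $(t_k-1-x_k)d_l$. Step 3, however, is the heart of the lemma and has a genuine gap. Your target $\mm^2\Delta\subseteq[L,\mm^2\Delta]+\mm^N\Delta$ is true, but the brackets you propose do not produce $fd_i$ ``up to higher-degree terms''.

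For $f$ of $x$-degree $r$ one has $[E,fd_i]=(r-1)fd_i+\sum_kx_kt_k\partial_{t_k}(f)d_i$. The correction has the same total degree as $f$; only the $x$-degree rises. For $f=y_1^2$ it contains $2x_1y_1d_i$, which lies in $\mm^2$ but not in $\mm^3$. Moreover, $r$ is the $x$-degree, not the $\mm$-degree. So $r-1=0$ precisely on the monomials $x_ky^{\ba}$ with $|\ba|\geq1$, which do lie in $\mm^2$. For instance, $[E,x_1y_1d_i]=x_1^2t_1d_i$ contains no $x_1y_1d_i$ at all.

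Your second bracket is miscomputed. For $j\neq i$ one has $d_i(y_j)=0$, so $[y_jd_j,fd_i]=y_jd_j(f)d_i$, with no $-ft_jd_i$ term. Even for $j=i$, the term $y_id_i(f)d_i$ has the same degree as $f$: for $f=x^{\bs}y^{\ba}$ the net coefficient of $fd_i$ is $a_i-1$, and there is a same-degree error $s_ix^{\bs-e_i}y^{\ba+e_i}d_i$.

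The trace fallback does not help either. The claim that every element of $\rho(\mm^2\Delta)$ is a commutator is exactly the unproved statement $\mm^2\Delta\subseteq[L,L]+\ker\rho$. So nothing you have shown excludes a nonzero central scalar in $\rho(\mm^2\Delta)$.

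What is missing is a choice of brackets that is exact, or triangular for a suitable order. The paper splits $\mm^2$ modulo $\mm^N$ into three types:

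\begin{enumerate}
\item Pure $x$-terms. For $f\in\C[x_1,\dots,x_n]$ homogeneous of degree $r\geq2$, $d_i$ acts as $\partial/\partial x_i$, so $[\sum_ix_id_i,fd_j]=(r-1)fd_j$ exactly.
\item Pure $y$-terms. For $f\in\C[y_1,\dots,y_n]$ homogeneous of degree $r\geq2$, use the commuting fields $t_i^{-1}d_i$, which satisfy $t_i^{-1}d_i(y_k)=\delta_{ik}$. With $E_t=\sum_iy_it_i^{-1}d_i\in L$ one gets $[E_t,ft_j^{-1}d_j]=(r-1)ft_j^{-1}d_j$ exactly.
\item Mixed monomials ($|\ba|,|\bs|\geq1$). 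One uses $[y^{\ba}d_i,x^{\bs+e_i}d_l]=(s_i+1)y^{\ba}x^{\bs}d_l-a_ly^{\ba-e_l}x^{\bs+e_i}d_i-a_ly^{\ba}x^{\bs+e_i}d_i$. Here $x^{\bs+e_i}d_l$ already acts by $0$, the second term has the same total degree but smaller $y$-degree, and the third has larger total degree.
\end{enumerate}

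Descending induction on total degree and ascending induction on $|\ba|$ then finish the proof. This works uniformly in $n$, so $n=1$ needs no special treatment.

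There is also a minor slip in Step 4. The correct bracket is $[X_{ki},Z_{lj}]=-\delta_{jk}Z_{li}$. So $\mathrm{span}\{Z_{ki}\}$ is $n$ copies of the dual natural module, not the adjoint module, and $[L/\mm^2\Delta,\mathrm{span}\{Z_{ki}\}]=\mathrm{span}\{Z_{ki}\}$ directly. Your separate treatment of $\sum_kZ_{kk}$ is still valid, since $[x_kd_k,(t_k-1)d_k]\equiv-Z_{kk}$ modulo $\mm^2\Delta$.
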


\begin{proof}
Let
\(
\rho\colon L\rightarrow\mathfrak{gl}(V)
\)
be the representation associated with \(V\).

By Lemma~\ref{lem:finite-jet}, there exists an integer \(N\geq2\)
such that
\begin{equation}\label{eq:finite-jet-property}
\mathfrak m^N\Delta V=0.
\end{equation}
Equivalently,  \(V\) is a module over the  quotient $
L/\mathfrak m^N\Delta$.

We first prove that
$
JV=\mathfrak m^2\Delta V=0$.
Let \(J^{(1)}=J\) and define inductively $
J^{(q+1)}=[J,J^{(q)}]$.
It follows from \eqref{eq:filtration-bracket}, by induction on \(q\),
that
\[
J^{(q)}\subseteq\mathfrak m^{q+1}\Delta.
\]
Together with \eqref{eq:finite-jet-property}, this implies that
\(\rho(J)\) is a nilpotent, and hence solvable, Lie algebra.

Since \(J\) is an ideal of \(L\), its image \(\rho(J)\) is an ideal
of $
G:=\rho(L)$.
The \(G\)-module \(V\) is irreducible. Therefore, by Lemma \ref{Hu},
\[
G=[G,G]\oplus Z(G),
\]
where \([G,G]\) is semisimple, and $Z(G)$ is equal to
the solvable radical of $G$. So as a  solvable ideal of $G$, we have
$\rho(J)\subseteq Z(G)$.
In particular,
\begin{equation}\label{eq:LJ-zero}
\rho([L,J])=0.
\end{equation}

We first eliminate the pure \(x\)-terms. Set
$ E_x=\sum_{i=1}^n x_id_i\in L$.
If $f\in\C[x_1,\ldots,x_n]$ is homogeneous of degree \(r\geq2\),
then
\[
[E_x,fd_j]=(r-1)fd_j.
\]
Since \(fd_j\in J\), equation~\eqref{eq:LJ-zero} gives
\(\rho(fd_j)=0\). Hence \(\mathfrak m_x^2\Delta V=0\), where
 $\mm_x
 =
 \langle x_1,\ldots,x_n\rangle
 \subseteq
 \mathbb C[x_1,\ldots,x_n]$.

For the pure \(t\)-terms, let \(y_i=t_i-1\),
\(\partial_i=t_i^{-1}d_i\), and
$E_t=\sum_{i=1}^n y_i\partial_i\in L$.
If \(f\in\C[y_1,\ldots,y_n]\) is homogeneous of degree \(r\geq2\),
then
\[
[E_t,f\partial_j]=(r-1)f\partial_j.
\]
Thus every such \(f\partial_j\) acts trivially on $V$. Let $
\mathfrak m_t
 =
 \langle t_1-1,\ldots,t_n-1\rangle
 \subseteq
 \mathbb C[t_1^{\pm1},\ldots,t_n^{\pm1}]
$. Modulo
\(\mathfrak m_t^N\), each Laurent monomial in the \(t_i\) has a
finite Taylor expansion in the \(y_i\). Together with
\eqref{eq:LJ-zero}, this shows that every element of
\(\mathfrak m_t^2\Delta\) acts trivially. Therefore
\begin{equation}\label{eq:pure-terms-zero}
\mathfrak m_t^2\Delta V=0,
\qquad
\mathfrak m_x^2\Delta V=0.
\end{equation}

We next prove that all mixed terms act trivially. More precisely,
we show that
\begin{equation}\label{eq:mixed-zero}
y^\ba x^\bs d_lV=0
\end{equation}
whenever \(|\ba|\geq1\) and \(|\bs|\geq1\).

By \eqref{eq:finite-jet-property}, the assertion is true when
\(|\ba|+|\bs|\geq N\). We now argue by descending induction on the total
degree
$q=|\ba|+|\bs|$,
and, for a fixed $q$, by ascending induction on \(|\ba|\).

The case \(|\ba|=0\) follows from
\(\mathfrak m_x^2\Delta V=0\). Suppose that \(|\ba|\geq1\).
For any \(1\leq i\leq n\), formula
\eqref{eq:basic-vector-field-bracket} gives
\begin{align*}
[y^\ba d_i,x^{\bs+e_i}d_l]
={}&
(s_i+1)y^\ba x^\bs d_l\\
&-
a_l y^{\ba-e_l}x^{\bs+e_i}d_i
-
a_l y^\ba x^{\bs+e_i}d_i.
\end{align*}
The second element \(x^{\bs+e_i}d_l\) in the bracket
 belongs to \(\mathfrak m_0^2\Delta\), and therefore
acts trivially by \eqref{eq:pure-terms-zero}. Hence the left-hand
side acts trivially on \(V\).
The term
$y^{\ba-e_l}x^{\bs+e_i}d_i$
has the same total degree \(q\), but has \(y\)-degree \(|\ba|-1\);
it acts trivially by the induction hypothesis on \(|\ba|\).
The term $y^\ba x^{\bs+e_i}d_i$
has total degree \(q+1\), and acts trivially by descending induction
on \(q\). Therefore
$(s_i+1)y^ax^sd_lV=0$.
Since \(s_i+1\neq0\),  \eqref{eq:mixed-zero} follows.

Every coefficient in \(\mathfrak m^2\), modulo \(\mathfrak m^N\),
is a linear combination of monomials of the following three types:
\[
y^\ba,\quad |\ba|\geq2;
\qquad
x^\bs,\quad |\bs|\geq2;
\qquad
y^\ba x^\bs,\quad |\ba|,|\bs|\geq1.
\]
The first two types act trivially by
\eqref{eq:pure-terms-zero}, and the third type acts trivially by
\eqref{eq:mixed-zero}. Together with
\eqref{eq:finite-jet-property}, this proves
\begin{equation}\label{eq:J-zero}
JV=\mathfrak m^2\Delta V=0.
\end{equation}

Thus \(V\) is naturally a simple module over \(L/J\). Define
\[
I=
\operatorname{span}_{\mathbb C}
\left\{
(t_k-1-x_k)d_l+J
\ \middle|\
1\leq k,l\leq n
\right\}
\subseteq L/J.
\]
From $$\aligned \
&[(t_i-1)d_j+\mm^2\Delta,(t_k-1-x_k)d_l+\mm^2\Delta]\\
&=\delta_{jk}(t_i-1)(t_k-1)d_l
-\delta_{li}(t_k-1-x_k)t_id_j+\mm^2\Delta\\
&=-\delta_{li}(t_k-1-x_k)d_j+\mm^2\Delta
\endaligned $$
and
$$\aligned \
&[x_id_j+\mm^2\Delta,(t_k-1-x_k)d_l+\mm^2\Delta]\\
&=\delta_{jk}x_i(t_k-1)d_l
-\delta_{li}(t_k-1-x_k)d_j+\mm^2\Delta\\
&=-\delta_{li}(t_k-1-x_k)d_j+\mm^2\Delta,
\endaligned $$
it follows that $[L/J,I]=I$ and $[I,I]=0$.

Let
$\overline{\rho}\colon L/J\rightarrow\mathfrak{gl}(V)$
be the induced representation and
$\overline{G}=\overline{\rho}(L/J)$.
Since \(I\) is an abelian ideal, \(\overline{\rho}(I)\) is a solvable
ideal of the reductive Lie algebra \(\overline{G}\). Hence
\[
\overline{\rho}(I)\subseteq Z(\overline{G}).
\]
Then we obtain
$\overline{\rho}(I)
 =
 \overline{\rho}([L/J,I])
 =
 [\overline{G},\overline{\rho}(I)]
 =
 0$.
Consequently,
$(t_k-1-x_k)d_lV=0$ for all $1\leq k,l\leq n$.
Finally, the kernel of the homomorphism \(\pi_2\) is
$ I=
\operatorname{span}_{\mathbb C}
\{Y_{ki}-X_{ki}\mid 1\leq k,i\leq n\}$ .
Therefore, $\frac{L/J}{I}\cong\mathfrak{gl}_n$.
The action of \(L\) on \(V\) factors through this quotient, so \(V\)
is a \(\mathfrak{gl}_n\)-module.
\end{proof}

The following result is well-known.

\begin{lemma}\label{tensor proper}
Let $A$, $B$ be two unital associative algebras with B having a countable basis.
\begin{enumerate}
\item If $M$ is a simple module over $A\otimes B$ that contains a simple $B=\C\otimes B$ submodule $V$, then $M \cong W\otimes V$ for a simple $A$-module $W$.
\item If $W$ and $V$ are simple modules over $A$ and $B$ respectively, then $W\otimes V$ is a simple module over $A\otimes B$.
\end{enumerate}
\end{lemma}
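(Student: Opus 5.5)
Both parts rest on the Jacobson density theorem together with Dixmier's lemma: since $B$ has countable dimension over the uncountable algebraically closed field $\C$, every simple $B$-module $V$ has countable dimension and satisfies $\operatorname{End}_B(V)=\C$. Hence, for any linearly independent $v_1,\dots,v_q\in V$ and any $v\in V$, there is $b\in B$ with $bv_i=\delta_{i1}v$.

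For (2), let $N\subseteq W\otimes V$ be a nonzero $A\otimes B$-submodule. Pick a nonzero $u=\sum_{i=1}^q w_i\otimes v_i\in N$ with $q$ minimal. Then the $w_i$ are linearly independent, and so are the $v_i$. By density applied to $B$, there is $b$ with $bv_i=\delta_{i1}v_1$. Then $(1\otimes b)u=w_1\otimes v_1\in N$ is a nonzero pure tensor. Acting by $A\otimes 1$ and $1\otimes B$, and using the simplicity of $W$ and $V$, we get $W\otimes V\subseteq N$.

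For (1), set $W=\operatorname{Hom}_B(V,M)$. It is a left $A$-module via $(a\cdot f)(v)=(a\otimes1)f(v)$, which makes sense because $A\otimes1$ commutes with $1\otimes B$. The evaluation map $\mathrm{ev}\colon W\otimes V\to M$, $f\otimes v\mapsto f(v)$, is an $A\otimes B$-module homomorphism. It is nonzero since the inclusion $V\hookrightarrow M$ lies in $W$, so by the simplicity of $M$ it is surjective. The plan is then to show two things: $\mathrm{ev}$ is injective, and $W$ is a simple $A$-module.

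\emph{Injectivity.} Take a kernel element $\sum_{i} f_i\otimes v_i$ with the $v_i$ linearly independent. Density gives $b$ with $bv_i=\delta_{i1}v_1$, so $0=(1\otimes b)\sum_i f_i(v_i)=f_1(v_1)$. Since $V$ is simple, $f_1$ vanishes on $V$, so $f_1=0$; likewise each $f_i=0$.

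\emph{Simplicity of $W$.} Once injectivity is known, $M\cong W\otimes V$. Any nonzero $A$-submodule $W'\subseteq W$ then gives a nonzero $A\otimes B$-submodule $W'\otimes V$, which must equal $W\otimes V$ by the simplicity of $M$; hence $W'=W$.

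The main obstacle is to justify $\operatorname{End}_B(V)=\C$, which is where the countable basis of $B$ is used. A simple $B$-module $V$ is cyclic, so $\dim V$ is countable. A nonzero endomorphism is determined by the image of a generator, so $\operatorname{End}_B(V)$ is a division algebra of countable dimension over $\C$. If it contained an element transcendental over $\C$, it would contain the field $\C(x)$, whose elements $(x-c)^{-1}$, $c\in\C$, are uncountably many and linearly independent; this is impossible. Hence every element is algebraic over $\C$, and so $\operatorname{End}_B(V)=\C$.

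In (1) this identification is also needed when applying density. The density theorem asserts that $B$ acts densely on $V$ over $\operatorname{End}_B(V)$. So it gives $bv_i=\delta_{i1}v$ for vectors $v_i$ that are linearly independent over $\operatorname{End}_B(V)$, and linear independence over $\C$ suffices only because $\operatorname{End}_B(V)=\C$.
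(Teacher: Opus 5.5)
Your proof is correct and complete. The paper gives no proof of this lemma (it is cited as well known), and your argument is the standard one: Dixmier's lemma gives $\operatorname{End}_B(V)=\C$, Jacobson density isolates a pure tensor, and for part (1) you use the evaluation map $\operatorname{Hom}_B(V,M)\otimes V\to M$. This is the same density-plus-Dixmier technique the paper itself uses in Claim~2 of Theorem~\ref{simplicity}.
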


For \(\lambda\in\C^n\), set \(A_n(\lambda)=t^\lambda A_n\), equipped
with the natural \(D_n\)-action by differential operators. Then
\(A_n(\lambda)\cong D_n/I_\lambda\), where \(I_\lambda\) is the left
ideal of \(D_n\) generated by \(d_i-\lambda_i\), \(i=1,\ldots,n\).

\begin{lemma}\label{A(lambda)}
\begin{enumerate}
\item $A_n(\lambda)$ is a simple $D_n$-module.
\item Any simple generalized weight $D_n$-module is isomorphic to some
$A_n(\lambda)$ for some $\lambda \in \C^n$.
\end{enumerate}
\end{lemma}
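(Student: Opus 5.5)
For (1), the plan is to show directly that any nonzero $D_n$-submodule $Q\subseteq A_n(\lambda)$ contains $t^{\lambda}$. Since $t^{\lambda}$ generates $A_n(\lambda)=t^\lambda A_n$ as an $A_n$-module, this forces $Q=A_n(\lambda)$.

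Take $0\neq f=t^{\lambda}g\in Q$ with $g=\sum_{\ba}t^{\ba}g_{\ba}(x)$, $g_{\ba}\in\C[x_1,\dots,x_n]$. On $A_n(\lambda)$ we have
$$(d_i-\lambda_i-a_i)\bigl(t^{\lambda+\ba}x^{\bs}\bigr)=s_it^{\lambda+\ba}x^{\bs-e_i}.$$
Hence each $d_i-\lambda_i-a_i$ preserves $t^{\lambda+\ba}\C[x]$ and acts there as $\partial/\partial x_i$. It acts on the other summands $t^{\lambda+\bb}\C[x]$, $\bb\neq\ba$, invertibly, since it is a nonzero scalar plus a locally nilpotent operator.

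\begin{enumerate}
\item \emph{Isolating one $t$-component.} The subspaces $t^{\lambda+\ba}\C[x]$ are exactly the generalized weight spaces for $\Delta$, with weight $\lambda+\ba$. Since $Q$ is $\Delta$-stable and $f$ involves only finitely many $\ba$, the projection of $f$ onto each generalized weight space is a polynomial in $d_1,\dots,d_n$ applied to $f$, by the usual Lagrange or Chinese-remainder argument. So $Q$ contains some $0\neq t^{\lambda+\ba}h(x)$.
\item \emph{Reducing to a monomial in $t$.} Applying suitable powers of $d_i-\lambda_i-a_i$ differentiates $h$ down to a nonzero constant, so $t^{\lambda+\ba}\in Q$.
\item \emph{Conclusion.} Multiplying by $t^{-\ba}\in A_n$ gives $t^\lambda\in Q$, and then $A_nt^\lambda=A_n(\lambda)\subseteq Q$.
\end{enumerate}

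For (2), let $P$ be a simple generalized weight $D_n$-module. Choose $\mu\in\operatorname{supp}(P)$. Because $P(\mu)\neq0$ and the $d_i$ commute and act locally finitely on it, there is a common eigenvector $0\neq p\in P_\mu$, that is, $d_ip=\mu_ip$ for all $i$. The element $d_i-\mu_i$ lies in the left ideal $I_\mu$ and annihilates $p$. Therefore $u\mapsto up$ induces a nonzero $D_n$-homomorphism
$$A_n(\mu)\cong D_n/I_\mu\to P.$$
It is surjective because $P$ is simple, and injective because $A_n(\mu)$ is simple by (1). Hence $P\cong A_n(\mu)$.

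The only slightly delicate point is the identification $A_n(\lambda)\cong D_n/I_\lambda$, which (2) relies on; the paper states it just before the lemma. To justify it, I would use that $D_n=A_n\C[d_1,\dots,d_n]$ is a free left $A_n$-module with basis the monomials in the $d_i$, by a PBW-type argument inside $A_n\#U(\mg_n)$. It then follows that $D_n/I_\lambda$ is a free $A_n$-module of rank one generated by the image of $1$. Since $1\mapsto t^\lambda$ sends this generator to a free generator of $A_n(\lambda)$, the map $D_n/I_\lambda\to A_n(\lambda)$ is an isomorphism. The computation in step 1 of (1) is routine.
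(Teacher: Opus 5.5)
Your proof is correct and follows the paper's argument essentially step for step. In (1) you isolate a single generalized weight component, differentiate down to a $t$-monomial and then shift by $t^{-\ba}$; the paper shifts first and then differentiates. In (2) you take a common eigenvector and map $A_n(\mu)$ onto $P$. Your PBW justification of $A_n(\mu)\cong D_n/I_\mu$ supplies a detail the paper sidesteps by defining the map directly as $ut^\lambda\mapsto up$ for $u\in A_n$.

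One small slip, which your argument never actually uses: a single operator $d_i-\lambda_i-a_i$ is invertible on $t^{\lambda+\bb}\C[x]$ only when $b_i\neq a_i$, not for every $\bb\neq\ba$. Your step 1 rightly relies on the joint generalized-eigenspace projection instead.
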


\begin{proof}
(1) Since
\[
d_it^{\lambda+\ba}x^{\bm}
=(\lambda_i+a_i)t^{\lambda+\ba}x^{\bm}
+m_it^{\lambda+\ba}x^{\bm-e_i},
\]
we have
\begin{equation}\label{d-act}
(d_i-\lambda_i-a_i)^{m_i}t^{\lambda+\ba}x^{\bm}
=m_i!\,t^{\lambda+\ba}x^{\bm-m_ie_i},
\end{equation}
and $(d_i-\lambda_i-a_i)^{m_i+1}t^{\lambda+\ba}x^{\bm}=0$.
Then $A_n(\lambda)=\oplus_{\ba\in \Z^n} A_n(\lambda)(\lambda+\ba)$,
where $A_n(\lambda)(\lambda+\ba)=t^{\lambda+\ba}\C[x_1,\dots,x_n]$
is the generalized weight space with weight $\lambda+\ba$. Suppose that $V$ is a
nonzero $D_n$-submodule of $A_n(\lambda)$. Since submodules of a generalized
weight module are also generalized
weight modules, $V$ must contain some $t^{\lambda+\ba}f(x_1,\dots,x_n)$.
By the action $t^{-\ba}$, $t^{\lambda}f(x_1,\dots,x_n)\in V$. Using (\ref{d-act})
repeatedly, we can obtain that the generator $t^\lambda$ of $A_n(\lambda)$
belongs to $V$. Hence $V=A_n(\lambda)$.

(2) Let \(M\) be a simple generalized weight \(D_n\)-module and
choose \(\lambda\in\operatorname{supp}M\). Since the commuting
operators \(d_i-\lambda_i\) act locally nilpotently on \(M(\lambda)\),
this space contains a nonzero common eigenvector \(v\in M_\lambda\).
Then \(D_nv=A_n\C[d_1,\dots,d_n]v=A_nv\). Hence the map
\[
u t^\lambda\mapsto uv, \quad u\in A_n,
\]
defines a nonzero \(D_n\)-module homomorphism
from $A_n(\lambda)$ to $M$. Part~\textup{(1)} and the simplicity of \(M\)
show that this map is an isomorphism.
\end{proof}

\begin{proposition}\label{(A_n,g_n)-module}
We have the following assertions.
\begin{enumerate}
\item For any \(\lambda\in\C^n\) and simple
\(\mm_{\b1,\bo}\Delta\)-module \(V\), the module
\(T(A_n(\lambda),V)\) is a simple \((A_n,\mg_n)\)-module.
\item For any simple cuspidal \((A_n,\mg_n)\)-module \(M\), there is a
finite-dimensional simple \(\gl_n\)-module \(V\) such that
\(M\cong T(A_n(\lambda),V)\).
\end{enumerate}
\end{proposition}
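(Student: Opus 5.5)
The plan is to transport everything through the isomorphism $\phi_1\colon A_n\#U(\mg_n)\cong D_n\otimes U(\mm_{\b1,\bo}\Delta)$ of Theorem~\ref{mainth1}. Then an $(A_n,\mg_n)$-module is the same as a module over $D_n\otimes U(\mm_{\b1,\bo}\Delta)$. I will first check that, for a $D_n$-module $P$ and an $\mm_{\b1,\bo}\Delta$-module $V$, the module $T(P,V)$ is exactly $P\otimes V$ pulled back along $\phi_1$. When $V$ is a $\gl_n$-module, inflated via $\pi_2\pi_1$ of Lemma~\ref{lem:gln-quotient1}, this should recover the formula for $\sigma$ in Section~2. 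Indeed, $\phi_1(t^{\bb}x^{\bs}d_j)$ has $D_n$-part $t^{\bb}x^{\bs}\cdot d_j$. Its $\mm_{\b1,\bo}\Delta$-part consists of the terms with $|\br|\le 1$ that survive modulo $\mm^2\Delta$. By Lemma~\ref{lem:gln-quotient1} the $\br=\bo$ term maps to $\sum_k b_kE_{kj}$, and the $\br=e_k$ terms map to $s_kE_{kj}$ with coefficient $t^{\bb}x^{\bs-e_k}$. Together these give $\sum_i d_i(t^{\bb}x^{\bs})\otimes E_{ij}$, as required.

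For part (1), $A_n(\lambda)$ is a simple $D_n$-module by Lemma~\ref{A(lambda)}(1), and $V$ is a simple $U(\mm_{\b1,\bo}\Delta)$-module. Also $U(\mm_{\b1,\bo}\Delta)$ has a countable basis, and $D_n$ does too. Lemma~\ref{tensor proper}(2) therefore gives that $A_n(\lambda)\otimes V$ is a simple module over $D_n\otimes U(\mm_{\b1,\bo}\Delta)$. Hence $T(A_n(\lambda),V)$ is a simple $(A_n,\mg_n)$-module.

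For part (2), let $M$ be a simple cuspidal $(A_n,\mg_n)$-module, viewed as a $D_n\otimes U(T_n)$-module. The key is to find a finite-dimensional simple $T_n$-submodule. Since $T_n$ commutes with $\Delta=\{d_i\otimes1\}$ (Lemma~\ref{D-L}), $T_n$ preserves each weight space $M_\lambda$. Choose $\lambda\in\operatorname{supp}(M)$ with $M_\lambda\neq0$. This is possible because $M(\lambda)\neq0$ and the commuting operators $d_i-\lambda_i$ are locally nilpotent there, so they have a common kernel vector. Then $M_\lambda$ is a nonzero $T_n$-module of dimension at most $r$, so it contains a simple finite-dimensional $T_n$-submodule $V$. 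Via Lemma~\ref{t-algebra}(2), $V$ is a simple $\mm_{\b1,\bo}\Delta$-module, and by Lemma~\ref{lem:gln-quotient} it is a simple $\gl_n$-module on which the action factors through $\pi_2\pi_1$. Lemma~\ref{tensor proper}(1), with $B=U(T_n)$ countable-dimensional, now gives $M\cong W\otimes V$ for a simple $D_n$-module $W$.

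It remains to identify $W$. Since $\Delta$ acts only on the $W$ factor, $M$ being a generalized weight module forces $W$ to be a generalized weight $D_n$-module: take any $w\otimes v$ with $v\neq0$ and use local finiteness of $d_i\otimes1$. By Lemma~\ref{A(lambda)}(2), $W\cong A_n(\mu)$ for some $\mu$, so $M\cong T(A_n(\mu),V)$ by the compatibility check in the first paragraph. The main obstacle is that compatibility check: confirming that the pullback along $\phi_1$ of $A_n(\mu)\otimes V$ agrees on the nose with the stated formulas for $T(P,V)$. This requires care with the $\delta_{\br,\bo}d_i$ correction and with the identification $(t_k-1)d_i\equiv x_kd_i$ in the image. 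The rest is a formal consequence of the cited lemmas.
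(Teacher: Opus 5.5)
Your proof is correct. Part (1) is exactly the paper's argument (Theorem~\ref{mainth1} together with Lemma~\ref{tensor proper}(2)). In part (2), however, you locate the two tensor factors in the opposite order from the paper.

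The paper picks $0\neq v\in M_\lambda$ and notes that $D_nv=A_nv\cong A_n(\lambda)$ is a simple $D_n$-submodule of $M$. It then applies Lemma~\ref{tensor proper}(1) with the distinguished simple factor over $D_n$, giving $M\cong A_n(\lambda)\otimes V$. Only afterwards does it deduce $\dim V<\infty$, from $(A_n(\lambda)\otimes V)_{\lambda+\mathbf a}=t^{\lambda+\mathbf a}\otimes V$ and cuspidality.

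You instead use that $T_n$ commutes with $\Delta$ to find a finite-dimensional simple $T_n$-submodule inside $M_\lambda$, and apply Lemma~\ref{tensor proper}(1) with $B=U(T_n)$. You then have to show separately, via local finiteness of $d_i\otimes 1$, that the complementary simple $D_n$-factor $W$ is a generalized weight module. Lemma~\ref{A(lambda)}(2) then gives $W\cong A_n(\mu)$.

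Your order has one small advantage: the tensor-product lemma is only used with a finite-dimensional simple factor, where $\operatorname{End}=\C$ is plain Schur. The paper's order needs $\operatorname{End}_{D_n}(A_n(\lambda))=\C$, which is supplied by the countable-basis hypothesis. The paper's order is shorter, because the identification with $A_n(\lambda)$ is built in and finite-dimensionality of $V$ comes for free.

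Your explicit verification that pulling back along $\phi_1$, then $\pi_2\pi_1$ on the second factor, reproduces the formula for $\sigma$ is also correct. The $\mathbf r=\mathbf 0$ term yields $\sum_k b_kE_{k,j}$ and the $\mathbf r=e_k$ terms yield $s_kE_{k,j}$. This makes explicit a compatibility that the paper only asserts in Section~2.
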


\begin{proof}
Part~\textup{(1)} follows from Lemma~\ref{tensor proper}(2) and
Theorem~\ref{mainth1}.

For part~\textup{(2)}, use \(\phi_1\) to regard \(M\) as a module over
\[
D_n\otimes U(\mm_{\b1,\bo}\Delta).
\]
Choose a weight \(\lambda\) and \(0\neq v\in M_\lambda\). Then
\[
D_nv=A_nv\cong A_n(\lambda)
\]
by Lemma~\ref{A(lambda)}. Applying Lemma~\ref{tensor proper}(1), we have
\[
M\cong A_n(\lambda)\otimes V
\]
for a simple \(U(\mm_{\b1,\bo}\Delta)\)-module \(V\). Since
the weight space $\bigl(A_n(\lambda)\otimes V\bigr)_{\lambda+\ba}
=t^{\lambda+\ba}\otimes V$ is finite-dimensional,
it follows  that \(V\) is finite-dimensional.
By Lemma~\ref{lem:gln-quotient},   \(V\) is a
finite-dimensional simple \(\gl_n\)-module.
\end{proof}

\subsection{Simple cuspidal $\mg_n$-module}

In this subsection, we will use the technique of  $A$-cover introduced in \cite{BF1} to classify simple cuspidal $\mg_n$-modules.
Let $\Theta$ be the co-multiplication of $U(\widetilde{\mg}_n)$, that is
$$\Theta: U(\widetilde{\mg}_n)\rightarrow U(\widetilde{\mg}_n)\otimes U(\widetilde{\mg}_n), X\mapsto X\otimes 1+1\otimes X,\  \ X\in \widetilde{\mg}_n.$$
Let $\pi: U(\widetilde{\mg}_n)\rightarrow U(\mg_n)$ be the homomorphism such that
$\pi |_{\mg_n}=\textup{Id}_{\mg_n}, \pi |_{A_n}=0$. Denote the composition $$(\textup{Id}_{\widetilde{\mg}_n}\otimes \pi)\cdot\Theta: U(\widetilde{\mg}_n)\rightarrow U(\widetilde{\mg}_n)\otimes U(\mg_n)$$ by $\Theta'$.
Then for any $(A_n,\mg_n)$-module $P$ and $\mg_n$-module $M$, $P\otimes M$ becomes an $(A_n,\mg_n)$-module under the pulling back of $\Theta'$. In particular,
$\mg_n\otimes M$ can be defined to be an $(A_n,\mg_n)$-module such that
$$t^{\ba}x^\bm  (t^{\bb}x^{\bs}d_i\otimes w)=t^{\ba+\bb}x^{\bm+\bs}d_i\otimes w,$$
where $w\in M$, $\ba,\bb\in \Z^n$,$\bm, \bs\in\Z_{\geq 0}^n$, $i=1,\dots,n$.

Define a linear map $\theta: \mg_n\otimes M\rightarrow M$ such that
$$t^{\ba}x^\bm d_i\otimes w\mapsto t^{\ba}x^\bm d_i w, \ w\in M.$$
One can check directly that $\theta$ is a $\mg_n$-module homomorphism.
Define
$$K(M)=\{ v\in \ker\theta \mid A_n  v\subset \ker \theta\},$$
which is an $(A_n,\mg_n)$-submodule of $\mg_n\otimes M$.
Let
$$\widehat{M}=(\mg_n\otimes M)/K(M),$$
 which is an $(A_n,\mg_n)$-module, called the $A$-cover of
$M$. It is natural that $\theta$ induces a $\mg_n$-module homomorphism
$\hat{\theta}: \widehat{M}\rightarrow M$. For any $X\in \mg_n$, $v\in M$,
denote the image of $X\otimes v$ in $\widehat{M}$ by $X\boxtimes v$.

%

\begin{proposition}\label{M-hat}
For any  cuspidal $\mg_n$-module $M$,
$\widehat{M}$ is cuspidal.
\end{proposition}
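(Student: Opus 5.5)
The plan follows the Billig--Futorny argument for $W_n$. It has three steps: first show that $\widehat{M}$ is a generalized weight module with support in $\operatorname{supp}(M)+\Z^n$; then show that each weight space $\widehat{M}_\mu$ is controlled by finitely many ``jets''; finally bound $\dim\widehat{M}_\mu$ uniformly in $\mu$ in terms of the bound $r$ for $M$.

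\textbf{Step 1: $\widehat M$ is a generalized weight module.} Since $M$ is a generalized weight module and $\mathfrak g_n$ is a generalized weight module for the adjoint action, the tensor product $\mathfrak g_n\otimes M$ is a generalized weight module for $\Delta$. Its generalized weight space of weight $\mu$ is
\[
\bigoplus_{\ba}\,\mathfrak g_n(\ba)\otimes M(\mu-\ba).
\]
Quotients of generalized weight modules are again generalized weight modules, so $\widehat M$ is one as well. The real issue is the dimension of the honest weight spaces $\widehat M_\mu$, since the generalized weight spaces are infinite-dimensional.

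\textbf{Step 2: reduction to $\widehat M$ as a module over $D_n\otimes U(\mathfrak m_{\mathbf 1,\mathbf 0}\Delta)$.} Use $\phi_1$ from Theorem~\ref{mainth1} to regard $\widehat M$ as a module over $D_n\otimes U(\mathfrak m_{\mathbf 1,\mathbf 0}\Delta)$. Fix $\mu$ and a weight vector $w\in \widehat M_\mu$. For each $\ba$, multiplication by $t^{\ba}$ gives a bijection $\widehat M_\mu\to\widehat M_{\mu+\ba}$, with inverse given by multiplication by $t^{-\ba}$. Hence it suffices to bound $\dim \widehat M_\mu$ for $\mu$ in a fixed set of representatives of $\C^n/\Z^n$, but the bound must not depend on the representative.

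The key point is to show that the operators $X_i(\ba,\bm)$, equivalently $\mathfrak m_{\mathbf 1,\mathbf 0}\Delta$ through $\gamma$, act on $\widehat M_\mu$ through a finite-dimensional quotient, namely $\mathfrak m_{\mathbf 1,\mathbf 0}\Delta/\mathfrak m_{\mathbf 1,\mathbf 0}^N\Delta$ for some $N$ independent of $\mu$. Since $X_i(\ba,\bm)$ commutes with $D_n$ by Lemma~\ref{D-L}, it preserves $\widehat M_\mu$. To get the finite-jet property, I would use $\hat\theta$ together with the definition of $K(M)$. An element $v\in\mathfrak g_n\otimes M$ lies in $K(M)$ exactly when $\theta(A_n v)=0$. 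Now $\theta(t^{\bb}x^{\bs}\cdot v)$ for $v=\sum_k t^{\ba_k}x^{\bm_k}d_{i_k}\otimes w_k$ is a sum of actions of elements of $\mathfrak g_n$ on $M$. So it suffices to show that for large $N$, suitable finite-difference combinations (in the variables $\ba,\bm$) of the elements $t^{\ba+\bb}x^{\bm+\bs}d_i$ act as zero on the finite-dimensional weight spaces $M_\lambda$. For this I would adapt the classical lemma of Billig--Futorny: on a cuspidal module, $\Omega^{(N)}_{\ba,\bb}=\sum_{j}(-1)^j\binom{N}{j}e_{\ba-j\bb}e_{\bb j}$ annihilates $M$. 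Here I would combine it with Claim~1 of Lemma~\ref{lem:finite-jet}, namely that $\operatorname{ad}x_id_i$ has finitely many eigenvalues, to dispose of the $x$-degrees.

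\textbf{Step 3: the uniform bound.} Granting the finite-jet property, $\widehat M_\mu$ is spanned by the images of $t^{\ba}x^{\bm}d_i\otimes w$ with $w$ in weight spaces $M_{\mu-\ba}$, $|\bm|<N$, and $\ba$ ranging over a set of bounded size. By cuspidality this gives $\dim\widehat M_\mu\le n\cdot C(N)\cdot r$, a bound independent of $\mu$. Before that, I must check that a weight vector of $\widehat M$ can be represented using weight vectors of $M$. For this I would use the Jordan decomposition of $d_i$ on the finite-dimensional $\Delta$-stable pieces $\mathfrak g_n(\ba)_{\le \bm}\otimes M(\lambda)_{\le k}$.

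The main obstacle is the uniform annihilation statement in Step 2. On $M$ itself nothing is finite-dimensional except the honest weight spaces, so the polynomial-in-$\ba$ argument of Billig--Futorny must be redone with $x$-powers and generalized (not honest) weight vectors. I would first try to prove that $\sum_j(-1)^j\binom{N}{j}t^{\ba-j\bb}\cdot t^{j\bb}x^{\bm}d_i$ lies in $K(M)$ for $N$ large, and then deduce the jet truncation from Lemma~\ref{lem:finite-jet}-style bracket manipulations.
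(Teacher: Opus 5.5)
Your proposal has a genuine gap. The step that carries the content of the proposition, a bound on $\dim\widehat M_\mu$ uniform in $\mu$, is announced but never proved, and the tools you point to cannot supply it.

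\textbf{The finite-jet property in Step~2.} Claim~1 of Lemma~\ref{lem:finite-jet} is an argument about a \emph{finite-dimensional} representation of $\mathfrak m_{\mathbf 1,\mathbf 0}\Delta$: it uses that $\operatorname{ad}\rho(x_id_i)$ on $\mathfrak{gl}(V)$ has finitely many eigenvalues. Applied to $\widehat M_\mu$, it presupposes $\dim\widehat M_\mu<\infty$, which is what you are trying to prove. Even then, the $N$ it produces depends on the eigenvalues of $x_id_i$ on $\widehat M_\mu$, so it gives no uniformity in $\mu$.

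Applied to $M$, it has nothing finite-dimensional to act on. The operator $x_id_i$, and more generally $t^{\mathbf a}x^{\mathbf m}d_i$ with $\mathbf m\neq\mathbf 0$, does not map honest weight spaces to honest weight spaces: for $w\in M_\lambda$ one has $(d_i-\lambda_i)x_id_iw=\lambda_iw$. The Billig--Futorny annihilation argument works with the operators $t^{\mathbf a}d_i\colon M_\lambda\to M_{\lambda+\mathbf a}$ between finite-dimensional spaces. It therefore does not extend to the $x$-directions in the way you suggest.

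Finally, the element $\sum_j(-1)^j\binom{N}{j}t^{\mathbf a-j\mathbf b}\cdot t^{j\mathbf b}x^{\mathbf m}d_i$ that you want to place in $K(M)$ lives in $A_n\#U(\mathfrak g_n)$, not in $\mathfrak g_n\otimes M$. So your target statement is not even well posed as written.

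\textbf{Step~3 does not follow even if Step~2 is granted.} A finite-jet property describes how $T_n\cong\mathfrak m_{\mathbf 1,\mathbf 0}\Delta$ acts on $\widehat M_\mu$. It puts no restriction on $\dim\widehat M_\mu$; an infinite-dimensional trivial $T_n$-module satisfies it. To get a bounded spanning set you need explicit elements of $K(M)$ that let you eliminate representatives $t^{\mathbf a}x^{\mathbf m}d_i\otimes w$ with $\mathbf a$ outside a fixed box or with positive $x$-degree.

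The Jordan-decomposition remark also does not show that $w$ can be taken to be an honest weight vector. A weight vector of the quotient $\widehat M$ need not lift to a weight vector of $\mathfrak g_n\otimes M$. Moreover, weight vectors of $\mathfrak g_n\otimes M$ of positive $x$-degree already involve non-weight vectors of $M$: for instance, $t^{\mathbf a}x_kd_i\otimes w_0-t^{\mathbf a}d_i\otimes w_1$ with $(d_k-\lambda_k+a_k)w_1=w_0$.

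\textbf{The missing idea.} The paper supplies exactly this reduction. From the definition of $K(M)$, by induction on the $x$-degree of a representative, $\widehat M_\lambda$ is spanned by $t^{\mathbf m}d_j\boxtimes M_{\lambda-\mathbf m}$: $x$-degree zero, with honest weight vectors. After that, only $W_n$ and the uniformly bounded weight $W_n$-module $V=\bigoplus_{\mathbf a}M_{\lambda+\mathbf a}$ enter. The uniform bound is then the Billig--Futorny theorem that the $A$-cover of a cuspidal $W_n$-module is cuspidal \cite[Theorem~4.8]{BF1}. No finite-jet property and no new annihilation lemma in the $x$-variables is needed.
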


\begin{proof}
It suffices to work on one support coset, so we assume that
\(\operatorname{supp}M\subseteq\lambda+\Z^n\).
We need to show that the weight spaces of $\widehat{M}$
are uniformly bounded.
First we can see that the weight space
\(\widehat M_\lambda\) is spanned by
\[
\{\,t^\bm d_{j}\boxtimes M_{\lambda-\bm}
\mid 1\leq j\leq n,\ \bm\in\Z^n\,\}.
\]
Indeed, this follows directly from the definition of \(K(M)\), by
induction on the total \(x\)-degree of a representative of a
weight vector in \(\mg_n\otimes M\).

 Let
\[
W_n=\bigoplus_{i=1}^n
\C[t_1^{\pm1},\ldots,t_n^{\pm1}]d_i
\cong\operatorname{Der}
\C[t_1^{\pm1},\ldots,t_n^{\pm1}]
\]
and let \(V=\bigoplus_{\ba\in\Z^n}M_{\lambda+\ba}\). Then \(V\) is a
uniformly bounded weight \(W_n\)-module. The displayed spanning formula
identifies the sum \(\oplus_{\bm\in \Z^n}\widehat M_{\lambda+\bm}\) of  weight spaces of \(\widehat M\) with
the \(A\)-cover \(W_n\boxtimes V\) of the $W_n$-module $V$. By
\cite[Theorem~4.8]{BF1}, this cover is uniformly bounded. Hence
\(\widehat M\) is cuspidal.
\end{proof}

\begin{lemma}\label{lem:finite-length-cover}
Let \(X\) be a cuspidal \((A_n,\mg_n)\)-module whose support is
contained in one coset of \(\Z^n\). Then \(X\) has finite length as an
\((A_n,\mg_n)\)-module.
\end{lemma}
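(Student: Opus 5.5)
We plan to pass through the isomorphism $\phi_1$ of Theorem~\ref{mainth1} and regard $X$ as a module over $D_n\otimes U(\mm_{\b1,\bo}\Delta)$. Choose $\lambda$ with $\operatorname{supp}X\subseteq\lambda+\Z^n$ and put $N=\bigoplus_{\ba\in\Z^n}X_{\lambda+\ba}$, the sum of genuine weight spaces. Since $\mm_{\b1,\bo}\Delta$ (that is, $T_n$) commutes with $D_n$, and in particular with every $d_i$, each weight space $X_{\lambda+\ba}$ is a $U(\mm_{\b1,\bo}\Delta)$-submodule. Its dimension is at most the uniform bound $r$. Multiplication by $t^{\ba}\in A_n$ is invertible and commutes with $U(\mm_{\b1,\bo}\Delta)$, so it maps $X_{\lambda}$ isomorphically onto $X_{\lambda+\ba}$ as $U(\mm_{\b1,\bo}\Delta)$-modules. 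Thus every weight space is a copy of the same module $V_0:=X_\lambda$, with $\dim V_0\leq r$.

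The key step is to show that the invariant $\ell(X):=\dim X_\lambda$ is additive and that a nonzero subquotient has nonzero $\ell$. For additivity, on a generalized weight space the $d_i$ act locally nilpotently after shifting by $\lambda_i$. Taking joint kernels is left exact, so $\dim Y_\lambda+\dim(X/Y)_\lambda\geq\dim X_\lambda$, with equality failing only if genuine eigenvectors in $X/Y$ do not lift. To avoid this issue we will instead measure the multiplicity through the structure of modules over $D_n$.

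Concretely, for a nonzero $(A_n,\mg_n)$-module $Z$ with $\operatorname{supp}Z\subseteq\lambda+\Z^n$ we have $Z_\lambda\neq0$. This holds because nonzero generalized weight spaces contain eigenvectors, and $t^{\ba}$ moves them between weights. Moreover, by the proof of Proposition~\ref{(A_n,g_n)-module}(2), the $D_n$-submodule generated by a weight vector $v\in Z_\lambda$ is $A_nv\cong A_n(\lambda)$. So every nonzero subquotient contains a copy of $A_n(\lambda)\otimes w$ coming from some $U(\mm_{\b1,\bo}\Delta)$-vector.

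We then argue by the length of $V_0$ as a $U(\mm_{\b1,\bo}\Delta)$-module, which is at most $r$. Given a strictly increasing chain $0=Y_0\subsetneq Y_1\subsetneq\cdots\subsetneq Y_k=X$ of submodules, we intend to show that $\dim (Y_j)_\lambda$ strictly increases, which forces $k\leq r$. For this it suffices that $(Y_{j}/Y_{j-1})_\lambda\neq0$ and that $(Y_j)_\lambda\to (Y_j/Y_{j-1})_\lambda$ is surjective. Surjectivity is the main obstacle. The way around it is the observation that $A_n(\lambda)$ is projective in the category of generalized weight $D_n$-modules supported on $\lambda+\Z^n$. Indeed, $\operatorname{Hom}_{D_n}(A_n(\lambda),Z)=\operatorname{Hom}_{D_n}(D_n/I_\lambda,Z)\cong Z_\lambda$, and if this functor is exact then surjectivity follows. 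To prove exactness, we first show that every such $Z$ is generated over $A_n$ by $Z_\lambda$ via Lemma~\ref{A(lambda)}-style arguments. Then we use the relation $d_i(x_i v)=x_i d_i v+v$ to lower the nilpotency degree of $d_i-\lambda_i$: a vector $u$ with $(d_i-\lambda_i)u=v$ an eigenvector gives $u-x_iv$ an eigenvector for $d_i$. Iterating over $i$ and over nilpotency degree shows $Z(\lambda)=\C[x_1,\dots,x_n]Z_\lambda$, and in fact $Z(\lambda)\cong\C[x]\otimes Z_\lambda$. This yields $\dim$-additivity of $Z\mapsto Z_\lambda$ on short exact sequences, hence $k\leq\dim X_\lambda\leq r$, and therefore finite length.
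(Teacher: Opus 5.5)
Your proposal is correct and follows the same route as the paper. Both arguments bound the length of any chain of $(A_n,\mg_n)$-submodules by $\dim X_\lambda$, using that the invertible operators $t^{\ba}$ force $Z_\lambda\neq0$ for every nonzero subquotient $Z$. Your one substantive addition is that $x_i$ and $d_i-\lambda_i$ make each generalized weight space $Z(\lambda)$ a Weyl-algebra module with locally nilpotent $d_i-\lambda_i$, so $Z(\lambda)\cong\C[x_1,\dots,x_n]\otimes Z_\lambda$ and $Z\mapsto Z_\lambda$ is exact. This is exactly the surjectivity of $(Y_j)_\lambda\to(Y_j/Y_{j-1})_\lambda$ that the paper's phrase ``each composition factor contributes a nonzero subquotient to $X_\lambda$'' uses without justification, so your version is the more complete one; the detour through $\phi_1$ and the $U(\mathfrak{m}_{\mathbf{1},\mathbf{0}}\Delta)$-structure of the weight spaces is not needed.
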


\begin{proof}
Fix \(\lambda\in\operatorname{supp}X\). Every nonzero subquotient
\(Y\) of \(X\) contains a nonzero weight vector. Since the
operators \(t^\ba\), \(\ba\in\Z^n\), act invertibly and shift weights
by \(\ba\), we have  \(Y_\lambda\neq0\). Consequently, each composition
factor in a composition series of \((A_n,\mg_n)\)-submodules of $X$ contributes a nonzero
subquotient to the finite-dimensional space \(X_\lambda\). Thus every
composition series of $X$ has length at most \(\dim X_\lambda\), and hence \(X\) has a finite length
composition series.
\end{proof}

\begin{theorem}\label{cuspidal}
If \(M\) is a simple cuspidal \(\mg_n\)-module, then \(M\) is
isomorphic to a simple quotient of \(T(A_n(\lambda),V)\) for some
\(\lambda\in\C^n\) and some finite-dimensional simple
\(\gl_n\)-module \(V\).
\end{theorem}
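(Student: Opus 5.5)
We follow the standard $A$-cover strategy of \cite{BF1}. Let $M$ be a simple cuspidal $\mg_n$-module. Since $\mg_n$ is simple and $M$ is nontrivial (a trivial module is one-dimensional and certainly a quotient of $T(A_n(\bo),V(\delta_0,0))=A_n$), we have $\mg_nM=M$. Moreover, $\operatorname{supp}M$ lies in a single coset $\lambda+\Z^n$, because $\mg_n$ only shifts weights by integer vectors. We form the $A$-cover $\widehat M=(\mg_n\otimes M)/K(M)$. The induced map $\hat\theta:\widehat M\to M$ is a $\mg_n$-module homomorphism, and its image is $\mg_nM=M$, so $\hat\theta$ is surjective. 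By Proposition~\ref{M-hat}, $\widehat M$ is a cuspidal $(A_n,\mg_n)$-module. Its support lies in $\lambda+\Z^n$ as well, so Lemma~\ref{lem:finite-length-cover} shows that $\widehat M$ has finite length as an $(A_n,\mg_n)$-module.

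Next we choose a composition series
\[
0=\widehat M_0\subset \widehat M_1\subset\cdots\subset\widehat M_s=\widehat M
\]
of $(A_n,\mg_n)$-submodules. Let $k$ be minimal with $\hat\theta(\widehat M_k)\neq0$. Then $\hat\theta(\widehat M_k)$ is a nonzero $\mg_n$-submodule of the simple module $M$, so it equals $M$. Since $\hat\theta(\widehat M_{k-1})=0$, the map $\hat\theta$ factors through a surjective $\mg_n$-homomorphism
\[
\widehat M_k/\widehat M_{k-1}\twoheadrightarrow M.
\]
The factor $\widehat M_k/\widehat M_{k-1}$ is a simple $(A_n,\mg_n)$-module. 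It is also cuspidal, being a subquotient of a cuspidal module, and its weight spaces are subquotients of weight spaces of $\widehat M$. The point needing care here is that the actual weight spaces of a subquotient of a generalized weight module could in principle be larger than the corresponding subquotients of weight spaces. We handle this by observing that in a simple $(A_n,\mg_n)$-module the $d_i$ act semisimply, or more directly by applying the decomposition argument below to any nonzero weight vector, which exists since the $d_i$ act locally finitely.

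By Proposition~\ref{(A_n,g_n)-module}(2), this simple cuspidal $(A_n,\mg_n)$-module is isomorphic to $T(A_n(\mu),V)$ for some $\mu\in\C^n$ and some finite-dimensional simple $\gl_n$-module $V$. This uses Lemma~\ref{lem:gln-quotient} to pass from $\mm_{\b1,\bo}\Delta$ to $\gl_n$; the resulting $\mg_n$-action coincides with the one defined by $\sigma$ in Section~2. Hence $M$ is a quotient of $T(A_n(\mu),V)$ as a $\mg_n$-module, and since $M$ is simple, it is a simple quotient, which proves the theorem.

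The main obstacle is making sure that the composition factor really has bounded weight spaces, so that Proposition~\ref{(A_n,g_n)-module}(2) applies. That proposition needs a nonzero $v$ with $D_nv\cong A_n(\lambda)$, and needs $\dim V<\infty$ to come from the finite-dimensionality of $(A_n(\lambda)\otimes V)_{\lambda+\ba}=t^{\lambda+\ba}\otimes V$. We would argue as follows. In $\widehat M_k/\widehat M_{k-1}$, choose a common eigenvector $v$ of $\Delta$; this is possible by local finiteness. Then Lemma~\ref{tensor proper}(1) gives $A_n(\lambda)\otimes V$. The weight space $t^{\lambda}\otimes V$ is a true weight space, so it is a subquotient of $\widehat M_\lambda$-type data, provided we use the finite-length bound on generalized weight spaces restricted to $t^{\lambda}\otimes V$. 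If this is delicate, we can instead bound $\dim V$ through the cover: we check that $(\widehat M)_\lambda$ surjects onto the $\lambda$-weight space of the factor. This holds because eigenvectors of the factor lift to generalized eigenvectors, and the $x$-nilpotent part can be removed using the $D_n$-structure, as in equation~\eqref{d-act}.
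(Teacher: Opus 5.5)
Your argument for nontrivial $M$ is the paper's own: the $A$-cover $\widehat M$, finite length by Lemma~\ref{lem:finite-length-cover}, the first index $k$ with $\hat\theta(\widehat M_k)\neq0$, and Proposition~\ref{(A_n,g_n)-module}(2). Your treatment of the trivial module, however, is wrong. The trivial module is \emph{not} a quotient of $A_n=T(A_n(\bo),V(\delta_0,0))$: for every $f\in A_n$ one has $(fd_1)\cdot x_1=f$, so $\mg_nA_n=A_n$. More generally, $d_j$ acts on any $T(A_n(\lambda),V)$ as $d_j\otimes1$. Moreover, $d_1$ is surjective on $A_n(\lambda)$: on $t^{\lambda+\ba}\C[x_1,\dots,x_n]$ it acts as $(\lambda_1+a_1)+\partial/\partial x_1$, which is bijective if $\lambda_1+a_1\neq0$ and is $\partial/\partial x_1$, still surjective, otherwise. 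Hence no $T(A_n(\lambda),V)$ has a nonzero trivial quotient. The trivial module occurs only as the submodule $\C t^{\bo}x^{\bo}\subset A_n$. So the statement must be restricted to nontrivial $M$, or ``quotient'' weakened to ``subquotient''. The paper's ``immediate'' passes over the same point, so this is a defect of the statement rather than of your method.

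On the weight-space issue for the composition factor, which the paper does not address, your first remedy is false. The $d_i$ do not act semisimply on simple $(A_n,\mg_n)$-modules: in $T(A_n(\lambda),V)$, which is simple by Proposition~\ref{(A_n,g_n)-module}(1), one has $(d_i-\lambda_i)(t^{\lambda}x_i\otimes v)=t^{\lambda}\otimes v\neq0$. Also, generalized weight spaces are infinite-dimensional, so there is no ``finite-length bound'' on them to invoke. Your last remedy is the right one, and it can be made precise. Let $X''\subseteq X$ be generalized weight $(A_n,\mg_n)$-modules, and let $v\in X(\mu)$ satisfy $(d_i-\mu_i)v\in X''$ for all $i$. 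Put $P_i=\sum_{k\geq0}\frac{(-x_i)^k}{k!}(d_i-\mu_i)^k$, which is a finite sum on $X(\mu)$. Using $[d_i,x_j]=\delta_{ij}$ one checks that $(d_i-\mu_i)P_i=0$ on $X(\mu)$ and $[d_j,P_i]=0$ for $j\neq i$. Hence $w=P_1\cdots P_nv$ lies in $X_\mu$ and $w-v\in X''$. So $X_\mu\to(X/X'')_\mu$ is onto, which gives $\dim(\widehat M_k/\widehat M_{k-1})_\mu\leq\dim\widehat M_\mu$. This is exactly what Proposition~\ref{(A_n,g_n)-module}(2) needs. Note that it is $(\widehat M_k)_\mu$, not $\widehat M_\mu$, that surjects onto the factor's weight space.
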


\begin{proof}
The assertion is immediate when \(M\) is trivial. Assume that \(M\) is
nontrivial. Since \(M\) is simple, \(M=\mg_nM\). Its support is
contained in one coset of \(\Z^n\). By Proposition~\ref{M-hat} and
Lemma~\ref{lem:finite-length-cover}, \(\widehat M\) is a cuspidal
\((A_n,\mg_n)\)-module of finite length. Let
\[
0=\widehat M_0\subseteq\widehat M_1\subseteq\cdots
\subseteq\widehat M_s=\widehat M,\qquad s\in\Z_{\geq1},
\]
be a composition series. Each \(\hat\theta(\widehat M_i)\) is a
\(\mg_n\)-submodule of \(M\). Hence there exists
\(0\leq j\leq s-1\) such that
\(\hat\theta(\widehat M_j)=0\) and
\(\hat\theta(\widehat M_{j+1})=M\). Thus \(M\) is a simple quotient of
the simple cuspidal \((A_n,\mg_n)\)-module
\(\widehat M_{j+1}/\widehat M_j\). By
Proposition~\ref{(A_n,g_n)-module},
\[
\widehat M_{j+1}/\widehat M_j\cong T(A_n(\lambda),V)
\]
for some \(\lambda\in\C^n\) and some finite-dimensional simple
\(\gl_n\)-module \(V\). The result follows.
\end{proof}

Note that  $T(A_n(\lambda),V)\cong T(A_n(\lambda+\bb),V)$ for any $\bb\in \Z^n$. Hence when $\lambda\in \Z^n$, we can assume that $\lambda=\bo$. By Theorem 2, Proposition 3 in \cite{Z}, Theorems~\ref{simplicity}
and~\ref{remain}, simple subquotients of $T(A_n(\lambda),V)$ can be described as follows.

Let $\lambda\in \C^n$ and \(V\) a simple
\(\gl_n\)-module. Then:
\begin{enumerate}
\item \(T(A_n(\lambda),V)\) is simple if
\(V\ncong V(\delta_r,r)\) for \(0\leq r\leq n\).

\item If $\lambda\not\in \Z^n$, then the complex (\ref{complex}) is exact.
\item For $1\leq r\leq n-1$, $\tilde{L}_n(A_n,r)=L_n(A_n,r)+\C t^{\bo}x^{\bo}\otimes V(\delta_r,r)$. And $T(A_n,(\delta_n,n))$ is simple.  $T(A_n,(\delta_0,0))=A_n$ has the only simple submodule $\C t^{\bo}x^{\bo}$.
\end{enumerate}

\vspace{2mm}

\noindent

\vspace{4mm}

\noindent G. L.: School of Mathematics and Statistics,
Henan University, Kaifeng 475004, China. Email: liugenqiang@henu.edu.cn

\noindent  X. Z.: School of Mathematics and Statistics,
Henan University, Kaifeng 475004, China. Email: XYZheng@henu.edu.cn

\noindent  Y. Z.: School of Mathematical Sciences, Hebei Normal University, Shijiazhuang 050024, China. E-mail: zhaoyf@hebtu.edu.cn

\vspace{0.2cm}

\end{document}